\documentclass{amsart}
\usepackage[toc,page]{appendix}
\usepackage[headheight=12.0pt]{geometry}
\usepackage[all,cmtip]{xy}
\usepackage{amsmath}
\usepackage{amsfonts}
\usepackage{amssymb}
\usepackage{mathrsfs}   
\usepackage{amsthm}
\usepackage{xcolor}
\usepackage{cite}
\usepackage{stmaryrd}
\usepackage{graphicx}
\usepackage{pgf}
\usepackage{eepic}
\usepackage{pstricks}
\usepackage{epsfig}
\usepackage{fancyhdr}
\usepackage{tikz}
\usepackage[backref=page]{hyperref}

\hypersetup{
     colorlinks   = true,
     citecolor    = red,
     linkcolor = blue,
     urlcolor = purple
}
    
\setlength{\footskip}{30pt}

\setcounter{tocdepth}{1}

\geometry{
 a4paper,
 left=30mm,
 right=30mm,
 top=40mm,
 bottom=40mm,
 }
 \usepackage{mathptmx}  
\linespread{1.2}
 
  \newcommand\imCMsym[4][\mathord]{%
  \DeclareFontFamily{U} {#2}{}
  \DeclareFontShape{U}{#2}{m}{n}{
    <-6> #25
    <6-7> #26
    <7-8> #27
    <8-9> #28
    <9-10> #29
    <10-12> #210
    <12-> #212}{}
  \DeclareSymbolFont{CM#2} {U} {#2}{m}{n}
  \DeclareMathSymbol{#4}{#1}{CM#2}{#3}
}
\newcommand\alsoimCMsym[4][\mathord]{\DeclareMathSymbol{#4}{#1}{CM#2}{#3}}

\imCMsym{cmmi}{124}{\CMjmath}
\imCMsym[\mathop]{cmsy}{113}{\CMamalg}
\imCMsym[\mathop]{cmex}{96}{\CMcoprod}
\alsoimCMsym[\mathop]{cmex}{97}{\CMbigcoprod}
 
 \pagestyle{fancy}
 \fancyhf{}

\theoremstyle{plain}
\newtheorem*{theoremu}{Theorem}
\newtheorem{theorem}{Theorem}[section]

\newtheorem{proposition}[theorem]{Proposition}
\newtheorem{corollary}[theorem]{Corollary}

\newtheorem{lemma}[theorem]{Lemma}

\theoremstyle{definition}
\newtheorem*{definitionu}{Definition}

\newtheorem{definition}[theorem]{Definition}

\theoremstyle{remark}
\newtheorem{remark}[theorem]{Remark}

\newcommand{\N}{{\mathbb N}}
\newcommand{\Z}{{\mathbb Z}}
\newcommand{\Q}{{\mathbb Q}}

\newcommand{\C}{{\mathbb C}}

\newcommand{\F}{{\mathbb F}}

\renewcommand{\P}{{\mathbb P}}

\newcommand{\D}{{\mathbb D}}

\newcommand{\lser}[1]{(\!(#1)\!)}
\newcommand{\pow}[1]{\llbracket #1 \rrbracket}

\newcommand{\spec}[1]{\mathrm{Spec}\left(#1\right)}
\newcommand{\cur}[1]{\mathcal{#1}}

\newcommand{\norm}[1]{\left\vert#1\right\vert}
\newcommand{\isomto}{\overset{\sim}{\rightarrow}}
\newcommand{\bu}{\bullet}

\newcommand{\cris}{\mathrm{cris}}
\newcommand{\rig}{\mathrm{rig}}

\newcommand{\ek}{\cur{E}_K}
\newcommand{\ekd}{\cur{E}_K^\dagger}
\newcommand{\tate}[1]{\langle #1 \rangle}

\newcommand{\spa}[1]{\mathrm{Spa}\left(#1\right)}
\newcommand{\spf}[1]{\mathrm{Spf}\left(#1\right)}

\newcommand{\et}{\mathrm{\acute{e}t}}

\newcommand{\pn}{(\varphi,\nabla)}
\newcommand{\rk}{\cur{R}_K}
\newcommand{\isoc}[1]{\mathrm{Isoc}^\dagger(#1)}
\newcommand{\fisoc}[1]{F\text{-}\mathrm{Isoc}^\dagger(#1)}
\newcommand{\pnm}[1]{\underline{\mathbf{M}\Phi}_{#1}^{\nabla}}

\title{Fundamental groups and good reduction criteria for curves over positive characteristic local fields}

\author{Christopher Lazda}
       \address{Dipartimento di Matematica ``Tullio Levi-Civita'' \\
       Via Trieste, 63 \\ 
        35121 Padova \\ 
        Italia}
       \email{lazda@math.unipd.it}

\fancyhead[RO]{C. Lazda}
\fancyhead[LE]{Good reduction criteria}
\fancyfoot[C]{\thepage}

\begin{document}

\begin{abstract} In this article I define and study the overconvergent rigid fundamental group of a variety over an equicharacteristic local field. This is a non-abelian $\pn$-module over the bounded Robba ring $\ekd$, whose underlying unipotent group (after base changing to the Amice ring $\cur{E}_K$) is exactly the classical rigid fundamental group. I then use this to prove an equicharacteristic, $p$-adic analogue of Oda's theorem that a semistable curve over a $p$-adic field has good reduction iff the Galois action on its $\ell$-adic unipotent fundamental group is unramified.
\end{abstract}

\maketitle 

\tableofcontents

\section*{Introduction}

Let $F$ be a complete, discretely valued field of characteristic $p>0$, $R$ its ring of integers, and $k$ its residue field, which to begin with will be assumed finite. Choose a uniformiser $t$, thus $R\cong  k\pow{t}$ and $F\cong k\lser{t}$. Let $G_F=\mathrm{Gal}(F^\mathrm{sep}/F)$ denote the absolute Galois group of $F$. While for any prime $\ell\neq p$ the category $\mathrm{Rep}_{\Q_\ell}(G_F)$ of $\ell$-adic Galois representations is reasonably tractable (for instance, one has a good notion of local monodromy), the category of $p$-adic Galois representations of $G_F$ exhibits rather wild behaviour. Similarly, when one studies the cohomology of varieties over $F$, one finds that the $\ell$-adic \'etale theory is much better behaved than the $p$-adic one.

The way that this gap `at $p$' has generally been filled is by various versions of de\thinspace Rham or crystalline type cohomology; generally speaking you take a lift of your variety to characteristic $0$, then use the de Rham cohomology of this lift to define the $p$-adic cohomology of the original variety in characteristic $p$. The coefficient objects for this theory, analogous to $\ell$-adic \'etale sheaves, are then vector bundles with integrable connection, plus a Frobenius structure. 

One good candidate, then, for a better behaved $p$-adic analogue of $\mathrm{Rep}_{\Q_\ell}(G_F)$, has been the category of $\pn$-modules over the Robba ring $\rk$. To describe these objects properly, let $\cur{V}$ be a complete DVR with residue field $k$ and fraction field $K$ of characteristic $0$. Choose a uniformiser $\pi$ of $\cur{V}$, the residue field from now on assumed to be simply perfect.

\begin{definitionu} The Robba ring $\rk$ is the ring of analytic functions which converge on some half-open annulus $\{\eta \leq \norm{t} <1  \}$ over $K$. In other words, it consists of series $\sum_ia_it^i$ such that $\norm{a_i}\rho^i\rightarrow 0$ as $i\rightarrow \infty$ for \emph{all} $\rho<1$, and $\norm{a_i}\eta^i\rightarrow 0$ as $i\rightarrow -\infty$ for \emph{some} $\eta<1$.
\end{definitionu}

Let $\rk^\mathrm{int}$ denote the subring of series $\sum_ia_it^i\in \rk$ with $a_i\in \cur{V}$, one checks that $\rk^\mathrm{int}/(\pi)\cong k\lser{t}\cong  F$, therefore $\rk$ is in some sense a `lift' of $F$ to characteristic $0$.

\begin{definitionu} Fix some $q=p^a$ such that $k\supset \F_q$. A Frobenius $\sigma$ on $\rk$ is a continuous endomorphism, preserving $\rk^{\mathrm{int}}$, and inducing the absolute $q$-power Frobenius on $F$.
\end{definitionu}

The notion of a $\pn$-module will be needed in other situations than over $\rk$, so let me make the following, somewhat imprecise definition.

\begin{definitionu}  Let $B$ be a $K$-algebra together with a Frobenius $\sigma:B\rightarrow B$ and a $K$-derivation $\partial_t:B\rightarrow B$. Then a $\pn$-module over $B$ is a finite free $B$-module $M$, together with:
\begin{itemize} \item a connection $\nabla:M\rightarrow M$, i.e. a $K$-linear map such that $\nabla(bm)=\partial_t(b)m+b\nabla(m)$ for all $b\in B$, $m\in M$;
\item a horizontal isomorphism $\varphi:\sigma^*M\rightarrow M$, i.e. a morphism commuting with the connections $\sigma^*\nabla$ and $\nabla$.
\end{itemize}
Denote the category of $\pn$-modules over $B$ by $\pnm{B}$.
\end{definitionu}

One of the reasons that $\underline{\mathbf{M}\Phi}_{\rk}^{\nabla}$ is often considered a good $p$-adic analogue of $\mathrm{Rep}_{\Q_\ell}(G_F)$ is that there exists a local monodromy theorem for objects in $\pnm{\rk}$, analogous to Grothendieck's local monodromy theorem for $\ell$-adic Galois representations. In fact, the $p$-adic local monodromy theorem holds for an arbitrary perfect residue field $k$. This then begs the question of the existence of a well behaved (i.e. extended Weil) cohomology theory
\[ X\mapsto H^i_\rig(X/\rk) \]
for varieties over $F$, taking values in the category of $\pn$-modules over $\rk$.

In fact, as we argued in the book \cite{LP16}, there is a slightly more refined analogue of the category $\pnm{\rk}$ which should be considered the natural candidate for such a cohomology theory, and this is the category of $\pn$-modules over the \emph{bounded} Robba ring $\ekd\subset \rk$, consisting of those series $\sum_ia_it^i$ such that $\sup_i\norm{a_i}<\infty$. (The Frobenius $\sigma$ on $\rk$ will be assumed to satisfy $\sigma(\ekd)\subset \ekd$, hence the above definition applies.) While this is not the place to go into the details of why, one way to think of the relationship between $\pnm{\ekd}$ and $\pnm{\rk}$ is analogous to the relationship between the category of de Rham $p$-adic representations of the Galois group of a $p$-adic local field and the category of Weil--Deligne representations; unlike the $\ell$-adic case there is genuine information lost in passing from the former to the latter.

The construction of such a cohomology theory for varieties over $F$, with values in $\pnm{\ekd}$, was the main topic of the book \cite{LP16}, where cohomology groups $H^i_\rig(X/\ekd)$ were defined, and a base change result proved, showing that there is a natural isomorphism
\[ H^i_\rig(X/\ekd)\otimes_{\ekd} \ek \isomto H^i_\rig(X/\ek) \]
to classical rigid cohomology over the Amice ring $\ek$. (This ring, which arises as the $p$-adic completion of $\ekd$, is a complete discretely valued field with residue field $F$, hence Berthelot's construction of rigid cohomology applies.) Again, as we argued there, this base change result can be viewed as an equicharacteristic analogue of the fact that $p$-adic representations over a mixed characteristic local field `coming from geometry' are de Rham, or equivalently potentially semistable. In \cite{LP16} we also gave some arithmetic applications of this theory, for example we proved versions of the weight monodromy conjecture and the N\'eron--Ogg--Shafarevich criterion.

The purpose of this article is twofold: first of all, to extend the main results of \cite{LP16} from cohomology to the simplest non-abelian homotopical invariants of varieties over $F$, that is their unipotent fundamental groups, and secondly to use this fundamental group to give a homotopical criterion for good reduction of a semistable curve.

For the first goal then, what is needed is to construct a some `unipotent non-abelian $\pn$-module' $\pi_1^\rig(X/\ekd,x)$ (in a sense to be defined) whose underlying unipotent group scheme over $\ekd$, upon base changing to $\ek$, is canonically isomorphic to the classical rigid fundamental group $\pi_1^\rig(X/\ek,x)$. The construction of such an object is entirely similar to that of a `relative' fundamental group associated to a smooth and proper family over a curve achieved in \cite{Laz15}, and the base change result essentially follows immediately from base change in cohomology. To get the formalism to work properly, however, requires a few results to be proved about the cohomology theory $H^i_\rig(X/\ekd,-)$ beyond those from the book \cite{LP16}, for example stability of overconvergence by extensions, and rigidity of the various categories of isocrystals studied there.

I will also introduce a technical tool that permits a simplification of the arguments from \cite{Laz15}, namely what I call `absolute Frobenius cohomology'. This should be regarded as a $p$-adic analogue of Jannsen's continuous \'etale cohomology, and is constructed by promoting the cohomology groups $H^i_\rig(X/\ekd)$ to complexes in a suitable derived category, then taking derived global sections within that category. After recalling some of the basic formalism on Tannakian categories I then prove the first fundamental result, Theorem \ref{bc}, showing that there is a canonical $\pn$-module structure on the $\ekd$-valued rigid fundamental group $\pi_1^\rig(X/\ekd,x)$. I will also prove a comparison result with the global situation handled in \cite{Laz15}.

The second goal is then to prove an equicharacteristic analogue of the main result of \cite{AIK15}, giving a non-abelian criterion for the good reduction of a stable curve of genus $g\geq 2$. It has long been known that for higher genus curves over local fields, unramifiedness of the Galois action on \'etale cohomology is not sufficient to detect good reduction of the curve. Remarkably, however, Oda showed in \cite{Oda95} that this can (almost) be fixed by instead looking at the unipotent fundamental group. I say almost, because his result does not apply to arbitrary curves, but those which admit semistable reduction. For curves over the $p$-adic field $K$, this amounts to the existence of a regular scheme $\cur{X}$, flat and proper over the ring of integers $\cur{V}$, with special fibre $\cur{X}_0$ a reduced divisor with normal crossings. In this case, by blowing down certain `superfluous' components of the special fibre one arrives at a \emph{stable} model $\cur{X}'$ of $X$ over $\cur{V}$ (for a precise definition of stability, see Definition \ref{defn: stable}). Oda's result then states that this stable model $\cur{X}'$ is smooth over $\cur{V}$ if and only if the induced outer $G_K$ action on $\pi_1^\et(X_{\overline{K}})$ is trivial, and moreover that it suffices to look at the quotient $\pi_1^\et(X_{\overline{K}})^{(4)}$ by the $4$th term in the lower central series. In particular, a curve admitting semistable reduction has good reduction if and only if the Galois action on its unipotent fundamental group is unramified.

Andreatta--Iovita--Kim's theorem \cite{AIK15} is then a $p$-adic analogue of this fact, in which a rational point $x\in X(K)$ is fixed, and the \'etale fundamental group $\pi_1^\et(X_{\overline{K}},x)$ is replaced by its $p$-adic unipotent envelope $\pi_1^\et(X_{\overline{K}},x)_{\Q_p}$, considered as a non-abelian Galois representation. Their result then states that a given stable model of $X$ over $\cur{V}$ is smooth if and only if $\pi_1^\et(X_{\overline{K}},x)_{\Q_p}$ is \emph{crystalline}; again that it suffices to consider the quotient $\pi_1^\et(X_{\overline{K}},x)_{\Q_p}^{(4)}$. They prove their result essentially by reducing to a similar result of Oda over the complex numbers, deforming the mixed characteristic situation to $\C$ and identifying certain $p$-adic and topological monodromy operators.

Working $p$-adically and in equicharacteristic, i.e. with stable curves over $R$, the appropriate analogue of being `crystalline' for $\pn$-modules over $\ekd$, is being defined over $S_K=\cur{V}\pow{t}\otimes_\cur{V} K$. I will call such $\pn$-modules `non-singular', the terminology is meant to suggest that $t=0$ is a non-singular point of the  differential equation. There is also a logarithmic analogue, which I will call `regular' (i.e. having regular singularities) which corresponds to the notion of a Galois representation being `semistable'. For those $\pn$-modules which are already regular, non-singularity is then again equivalent to the vanishing of a certain monodromy operator. The criterion for good reduction is then the following.

\begin{theoremu}[\ref{main2}, \ref{main4}] Let $\cur{X}\rightarrow \spec{R}$ be a proper, stable curve of genus $g\geq 2$. Then $\cur{X}$ is smooth over $R$ if and only if the unipotent fundamental group $\pi_1^\rig(X/\ekd,x)$ is non-singular. In fact, it suffices to consider the quotient $\pi_1^\rig(X/\ekd)^{(4)}$ by the $4$th term in the lower central series.
\end{theoremu}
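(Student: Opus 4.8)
The plan is to reduce both implications to a computation of the monodromy operator on the fundamental group Lie algebra in terms of the dual graph of the special fibre $\cur{X}_0$. Since $\cur{X}\to\spec{R}$ is stable it carries a log-smooth structure, with the log structure coming from the nodes of $\cur{X}_0$, and so I expect the fundamental group $\pi_1^\rig(X/\ekd,x)$ to be \emph{regular} in the sense of the introduction. For a regular $\pn$-module, non-singularity is equivalent to the vanishing of the monodromy operator $N$, so the theorem becomes the assertion that $\cur{X}$ is smooth over $R$ if and only if $N$ vanishes on $\pi_1^\rig(X/\ekd,x)$, together with the refinement that it suffices to test this on the quotient by the fourth term $C^4$ of the lower central series.

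First I would dispatch the direction that smoothness implies non-singularity. If $\cur{X}/R$ is smooth and proper, then it lifts to a smooth proper formal scheme over $\cur{V}\pow{t}$, whose relative de Rham fundamental group is naturally an object defined over $S_K=\cur{V}\pow{t}\otimes_\cur{V}K$. By the $\pn$-module structure and base change of Theorem \ref{bc}, together with base change in cohomology from \cite{LP16}, this object recovers $\pi_1^\rig(X/\ekd,x)$ after base change to $\ekd$. Being defined over $S_K$ is precisely the definition of non-singularity, so $N=0$ in this case.

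The substantial direction is the converse. Writing $\mathfrak{g}=\mathrm{Lie}\,\pi_1^\rig(X/\ekd,x)$ for the associated pro-nilpotent Lie algebra, $N$ acts as a derivation of $\mathfrak{g}$, and I would compute its effect against the combinatorics of $\cur{X}_0$. On the abelianisation $\mathfrak{g}/C^2\mathfrak{g}=H^1_\rig(X/\ekd)$ the operator $N$ is the usual monodromy on $H^1$, which by the weight-monodromy results of \cite{LP16} vanishes exactly when the dual graph $\Gamma$ of $\cur{X}_0$ is a tree, that is, when $\cur{X}_0$ has no non-separating nodes. A separating node, by contrast, contributes a vanishing cycle lying in $C^2\pi_1$ rather than in the abelianisation, so its monodromy is trivial modulo $C^3$ but non-trivial on the degree-three piece $C^3\mathfrak{g}/C^4\mathfrak{g}$, through the bracket of the corresponding degree-two class against the degree-one generators. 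Hence the monodromy on the quotient $\pi_1^\rig(X/\ekd)^{(4)}$ detects both types of node at once: it vanishes if and only if $\Gamma$ has no edges, i.e. $\cur{X}_0$ is smooth, whereupon stability forces $\cur{X}/R$ to be smooth, and in that case $N$ automatically vanishes on all of $\mathfrak{g}$. This combinatorial dependence is exactly Oda's computation, which I would import once the link between $N$ and $\Gamma$ is in place.

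The main obstacle is precisely that link: identifying the monodromy operator $N$ on the non-abelian fundamental group, a $\pn$-module over $\ekd$, with the combinatorial operator attached to the dual graph. This requires computing the log-rigid fundamental group of the nodal special fibre and the residue of its log connection, and matching the result against the weight filtration inside the category $\pnm{\ekd}$, in particular tracking how separating and non-separating nodes sit with respect to the lower central series. Once this identification is established, both the sufficiency of the fourth quotient and the equivalence with smoothness follow from Oda's purely Lie-theoretic analysis of the lower central series.
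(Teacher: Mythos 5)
Your skeleton agrees with the paper's up to a decisive point: reducing non-singularity to the vanishing of the monodromy operator $N$ (valid once regularity is known, as in \S\ref{soln}), establishing regularity of $\pi_1^\rig(X/\ekd,x)$ in the semistable case, and localising $N$ on the log special fibre are precisely the content of \S\ref{soln} and \S\ref{sec: ssreg}. Note, though, that you only \emph{expect} regularity; in the paper this is Theorem \ref{theo: ssreg}, and it is not free: it requires constructing the log-crystalline fundamental group $\pi_1^{\lc}((\cur{X},M)/S_K,s)$ over $\underline{\mathbf{M}\Phi}_{S_K}^{\nabla,\log}$ and proving a base change isomorphism with $\pi_1^\rig(X/\ekd,x)$ by the universal-unipotent-objects machinery of Theorem \ref{bc}, with proper base change in log-crystalline cohomology doing the work. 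Your easy direction (smooth implies non-singular) is fine.

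The genuine gap is the step you yourself flag as ``the main obstacle'': identifying the $p$-adic monodromy operator $N$ on the non-abelian $\pn$-module $\pi_1^{\lc}((X_0,M_0)/K,s_0)$ with the combinatorial operator attached to the dual graph, so that Oda's Lie-theoretic analysis of the lower central series can be ``imported.'' You give no method for establishing this identification, and it is not a routine verification: Oda's computation \cite{Oda95} lives in the topological/$\ell$-adic setting, and there is no a priori comparison between the residue of the log connection on the log-rigid fundamental group and the topological monodromy to which his analysis applies --- your assertion that a separating node acts non-trivially on $C^3\mathfrak{g}/C^4\mathfrak{g}$ is the correct heuristic but is itself part of what must be proved. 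The paper circumvents exactly this by deformation: unobstructedness of log-smooth curve deformations \cite[Proposition 8.6]{Kat96} yields a stable curve $\cur{Y}/W$ with log special fibre $(X_0,M_0)$; the theorems of Andreatta--Iovita--Kim \cite[Theorems 1.4, 1.6, Proposition 6.3]{AIK15} (which in turn deform to $\C$ and invoke Oda) show $\cur{Y}$ is smooth iff $\pi_1^\et(Y_{\overline{K}},y)_{\Q_p}$ is crystalline, i.e. iff $N=0$ on $\mathbf{D}_\mathrm{st}$, with the fourth lower-central quotient sufficing; and the bridge is the non-abelian comparison $\mathbf{D}_\mathrm{st}(\pi_1^\et(Y_{\overline{K}},y)_{\Q_p})\cong\pi_1^{\lc}((X_0,M_0)/K,s_0)$ of Theorem \ref{theo: dst}, proved via the universal objects $W_n$ and \cite[Theorem 1.8]{AIK15}. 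A direct algebraic computation of the kind you sketch is, as the paper notes, the subject of work in progress of Chiarellotto, di\thinspace Proietto and Shiho \cite{CDPS}; without either that or the deformation argument, your proof does not close.
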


Just as the main theorem of \cite{AIK15} is proved by deforming to the complex situation and applying an analogous result of Oda \cite{Oda95}, the proof of Theorem \ref{main2} will proceed by deforming to mixed characteristic and applying the result of Andreatta--Iovita--Kim. As previously mentioned, in the semistable case, the `non-singularity' of $\pi_1^\rig(X/\ekd,x)$ is equivalent to the vanishing of a certain monodromy operator, this can now be detected entirely on the level of the special fibre $X_0$ of $\cur{X}$, considered as a log scheme. Deforming this log-smooth curve to mixed characteristic reduces to the situation considered in \cite{AIK15}. In fact, this last step could also be accomplished by using work in progress of Chiarellotto, di\thinspace Proietto and Shiho \cite{CDPS}, which will provide a completely algebraic proof of these sorts of `good reduction' theorems for curves. Their result should then apply directly to the log scheme $X_0$ in characteristic $p$.

\subsection*{Acknowledgements}

I would like to thank Bruno Chiarellotto and Ambrus P\'al for suggesting that the machinery of \cite{Laz15,LP16} could be used to prove an equicharacteristic analogue of Andreatta--Iovita--Kim's results, as well as for numerous useful conversations regarding the contexts of this article. In particular, I would like to thank Prof. Chiarellotto for explaining to me the right way to `relativise' the universal unipotent objects that give rise to the fundamental group. I was supported during the writing of this article by a Marie Curie fellowship of the Istituto Nazionale di Alta Matematica.

\subsection*{Notations and conventions}

I will denote by $F$ a complete discretely valued field of characteristic $p>0$, by $R$ its ring of integers, $t$ a uniformiser and $k$ its residue field, which will be assumed perfect. I will fix $q=p^a$ such that $k\supset \F_q$, and Frobenius will always refer to the $q$-power Frobenius. I will write $W=W(k)$ for the ring of Witt vectors of $k$, and $K_0=W[1/p]$ for its fraction field. I will denote by $K$ a finite, totally ramified extension of $K_0$, and $\cur{V}$ its ring of integers; $\sigma$ will denote a choice of ($q$-power) Frobenius on $\cur{V}$ or $K$, and $\pi$ a uniformiser of $\cur{V}$. I will write $K^\sigma$ for the fixed field of Frobenius, it is thus a finite totally ramified extension of $\Q_q:=W(\F_q)[1/p]$. Formal schemes will always be understood to be $p$-adic formal schemes, in particular the power series ring $\cur{V}\pow{t}$ will be equipped with the $p$-adic topology and not the $(p,t)$-adic one.

I will denote by $\rk$ the Robba ring over $K$, that is the ring of series $\sum_ia_it^i$ with $a_i\in K$, converging on some half open annulus $\{\eta\leq \norm{t} <1\}$, by $\ekd\subset \rk$ its bounded subring, i.e. the subring of elements such that $\sup_i\norm{a_i}<\infty$, and by $\rk^{\mathrm{int}}=\mathcal{O}_{\ekd}\subset \rk$ the subring of integral elements, i.e. those with $a_i\in \cur{V}$. I will let $\rk^+$ denote the plus part of $\rk$, that is the subring of elements with $a_i=0$ for $i<0$, and write $S_K=\rk^+\cap \ekd$, this is equal to $\cur{V}\pow{t}\otimes_\cur{V} K$. 

The ring $\rk^\mathrm{int}=\mathcal{O}_{\ekd}$ is a Henselian local ring, with maximal ideal generated by $\pi$ and fraction field $\ekd$, I will let $\ek$ denote the completion of $\ekd$ and $\mathcal{O}_{\ek}$ its ring of integers. Equivalently, $\ek$ is the ring of series $\sum_ia_it^i$ with $\sup_i \norm{a_i}<\infty$ and $\norm{a_i}\rightarrow 0$ as $i\rightarrow -\infty$, and $\mathcal{O}_{\ek}$ is the subring of $\ek$ consisting of series with $a_i\in \cur{V}$. I will fix once and for all a Frobenius $\sigma$ on $\cur{V}\pow{t}$, that is a $p$-adically continuous endomorphism, $\sigma$-linear over $\cur{V}$, and lifting the absolute $q$-power Frobenius on $\cur{V}\pow{t}/(\pi)\cong A$. Such a Frobenius extends uniquely to a Frobenius on each of $\mathcal{O}_{\ekd}, \mathcal{O}_{\ek}, S_K,\rk^+,\ekd,\ek$ and $\rk$, all of which I will also denote by $\sigma$. I will further assume that $\sigma(t)=ut^q$ for some $u\in S_K^\times$ such that $u\equiv 1 \text{ mod }\pi$.

\section{Fundamental groups for varieties over Laurent series fields}\label{parti}

Let $X/F$ be a geometrically connected variety, and suppose that there is some rational point $x\in X(F)$. To define $p$-adic fundamental groups, it will be necessary to use Berthelot's category $\mathrm{Isoc}^\dagger(X/\ek)$ of overconvergent isocrystals on $X/\ek$, introduced in \cite[\S2]{Ber96b} as a $p$-adic analogue of the category of lisse $\ell$-adic sheaves on $X_{\overline{F}}$. To define these objects, one takes an embedding $X\hookrightarrow \mathfrak{P}$ into a smooth and proper formal $\cur{O}_{\ek}$-scheme, and considers the embedding 
\[ j:]X[_\mathfrak{P} \rightarrow ]\overline{X}[_\mathfrak{P} \]
of the tubes (here $\overline{X}$ is the closure of $X$ in $\mathfrak{P}$). Letting $j_X^\dagger\cur{O}_{]\overline{X}[_\mathfrak{P}}$ denote the subsheaf of $j_*\cur{O}_{]X[_\mathfrak{P}}$ consisting of functions which converge on some strict neighbourhood of $]X[_\mathfrak{P}$,  an overconvergent isocrystal is then a coherent $j_X^\dagger\cur{O}_{]\overline{X}[_\mathfrak{P}}$-module with integrable connection, together with a convergence condition on its Taylor series. One can check that this does not depend on the choice of embeddings, and hence can be glued if $X$ does not globally admit such an embedding.

Both $\mathrm{Isoc}^\dagger(X/\ek)$ as well as its full subcategory $\cur{N}\mathrm{Isoc}^\dagger(X/\ek)$ of unipotent objects (i.e. those which are iterated extensions of the unit object) are Tannakian, and pulling back via the point $x$ provides a fibre functor
\[  x^*:\cur{N}\mathrm{Isoc}^\dagger(X/\ek)\rightarrow  \mathrm{Vec}_{\ek}.\]

\begin{definition} The (unipotent) rigid fundamental group  $\pi_1^\rig(X/\ek,x)$ of $X/\ek$ is by definition the affine group scheme representing automorphisms of the fibre functor $x^*$.
\end{definition}

There is therefore a canonical equivalence of categories $\mathrm{Rep}_K(\pi_1^\rig(X/\ek,x))\cong \cur{N}\mathrm{Isoc}^\dagger(X/\ek)$ such that the diagram
\[ \xymatrix{ \mathrm{Rep}_K(\pi_1^\rig(X/\ek,x))\ar[rr]\ar[dr]_{\mathrm{forget}} & & \cur{N}\mathrm{Isoc}^\dagger(X/\ek) \ar[dl]^{x^*} \\ & \mathrm{Vec}_K }\]
commutes. Note that $\pi_1^\rig(X/\ek,x)$ should be considered as a $p$-adic analogue of the \emph{geometric} unipotent $\ell$-adic fundamental group $\pi_1^\et(X_{\overline{F}},x)_{\Q_\ell}$ classifying unipotent lisse $\Q_\ell$-sheaves on $X_{\overline{F}}$ (for $\ell\neq p$). This latter object comes with the extra structure of a Galois action, and the analogous extra structure in the $p$-adic world is that of a $\pn$-module over $\ek$. In the non-abelian case, this really means that the Hopf algebra $\hat{A}^{\rig,\vee}_{\infty,x}:=\mathcal{O}(\pi_1^\rig(X/\ek,x)$ is a `Hopf algebra in the category $\underline{\mathbf{M}\Phi}^\nabla_{\ek}$ of $\pn$-modules over $\ek$' in the following sense.

\begin{definition} \label{hopfenriched}  An Hopf algebra in $\underline{\mathbf{M}\Phi}^\nabla_{\ek}$ is an ind-object $A\in \mathrm{Ind}(\underline{\mathbf{M}\Phi}^\nabla_{\ek})$ together with maps 
\begin{itemize}\item $u:\ek\rightarrow A$ (unit),
\item $\epsilon:A\rightarrow \ek$ (augmentation),
\item $m:A\otimes_{\ek} A\rightarrow A$ (multiplication),
\item $c:A\rightarrow A\otimes_{\ek} A$ (comultiplication),
\item $i:A\rightarrow A$ (antipode)
\end{itemize}
making the diagrams expressing the axioms for a Hopf algebra commute.
\end{definition}

As a concrete example, the `associativity' of the multiplication is expressed by the commutativity of the diagram
\[ \xymatrix{ A\otimes_{\ek}A\otimes_{\ek}A \ar[r]^-{\mathrm{id}\otimes m} \ar[d]_-{m\otimes\mathrm{id}} & A\otimes_{\ek} A\ar[d]^-m \\ A\otimes_{\ek} A \ar[r]^-m & A. }\]
It is fairly easy to see that the `forgetful functor' $\underline{\mathbf{M}\Phi}^\nabla_{\ek}\rightarrow \mathrm{Vec}_K$ takes Hopf algebras to Hopf algebras, and so the following definition makes sense.

\begin{definition} A $\pn$-module structure on $\pi_1^\rig(X/\ek,x)$ is a Hopf algebra $\hat{\cur{A}}^{\rig,\vee}_{\infty,x}$ in $\underline{\mathbf{M}\Phi}^\nabla_{\ek}$ whose underlying Hopf algebra over $\ek$ is $\hat{A}^{\rig,\vee}_{\infty,x}$
\end{definition}

I won't go into how to construct such a $\pn$-module structure on $\pi_1^\rig(X/\ek,x)$, since I will do this in detail for the more refined `overconvergent version' which lies over $\ekd$, and the construction is essentially identical. 

As alluded to in the introduction, (one of) the main point(s) of the book \cite{LP16} was to show that the rigid cohomology $H^i_\rig(X/\ek)$ of a variety $X/F$, which is a $\pn$-module over $\ek$, descends to a $\pn$-module $H^i_\rig(X/\ekd)$ over the bounded Robba ring $\ekd$ (by Kedlaya's full faithfulness theorem, this latter object is unique up to isomorphism). The first main aim of this article is to extend this to the unipotent fundamental group. In \cite{LP16} we introduced a category of $\ekd$-valued isocrystals $\mathrm{Isoc}^\dagger(X/\ekd)$, and using this instead of $\mathrm{Isoc}^\dagger(X/\ek)$ defines the unipotent fundamental group $\pi_1^\rig(X/\ekd,x)$ as an affine group scheme over $\ekd$. To put a $\pn$-module structure on this group, I will use essentially the same methods as in \cite{Laz15}, considering a certain `relative Tannakian' situation; specifically I will use the category of `absolute isocrystals' $\fisoc{X/K}$ and the natural functors
\[ H^i_\rig(X/\ekd,-):\fisoc{X/K} \leftrightarrows \pnm{\ekd}:-\otimes \mathcal{O}_{X/K}^\dagger , \]
the key point being to prove a certain base change theorem (Theorem \ref{bc}). To make the arguments from \cite{Laz15} slightly more streamlined, I will introduce a version of `absolute Frobenius cohomology', which can be viewed as an equicharacteristic analogue of syntomic cohomology for varieties over finite extensions of $\Q_p$. While not strictly necessary to make the argument work (for instance, the corresponding theorem in \cite{Laz15} was proved without any reference to an absolute Frobenius cohomology) it does make it simpler. Moreover, the absolute Frobenius cohomology groups introduced will likely have numerous future applications, for instance in the study of $L$-functions attached to isocrystals, and this potential more than justifies the (mostly formal) extra work required in setting the theory up.

The end product of all this, then, is a non-abelian $\pn$-module $\pi_1^\rig(X/\ekd,x)$ over $\ekd$, in other words a Hopf algebra in the category $\pnm{\ekd}$. The analogue of the base change results from \cite{LP16} is then the fact that $\pi_1^\rig(X/\ekd,x)\otimes_{\ekd} \ek \cong \pi_1^\rig(X/\ek,x)$, I will prove this in \S\ref{maini}. I will also prove a comparison result with the global case, i.e. with the relative fundamental group of a smooth and proper family $\mathcal{X}\rightarrow C$ constructed in \cite{Laz15}.

\section{Preliminaries on overconvergent isocrystals}

In the book \cite{LP16} we introduced categories $(F\text{-})\mathrm{Isoc}^\dagger(X/\ekd)$ and $(F\text{-})\mathrm{Isoc}^\dagger(X/K)$ of overconvergent ($F$-)isocrystals on a variety $X/F$, relative to $\ekd$ and $K$ respectively, refining the classical categories of overconvergent ($F$-)isocrystals relative to $\ek$. Using a Tannakian formalism to study the fundamental group $\pi_1^\rig(X/\ekd,x)$ requires certain results about these categories, the most important of which (clearly) is that they are in fact Tannakian, although there are others. In this section I will gather together some of these results needed in the rest of the article. 

\begin{lemma} \label{ef} Let $(X,Y,\mathfrak{P})$ be a smooth frame over $\cur{V}\pow{t}$, with base change $(X,Y',\mathfrak{P}')$ to $\mathcal{O}_{\ek}$. Then the functor $\mathrm{MIC}^\dagger((X,Y,\mathfrak{P})/\ekd)\rightarrow \mathrm{MIC}^\dagger((X,Y',\mathfrak{P}')/\ek)$ is exact and faithful.
\end{lemma}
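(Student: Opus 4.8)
The plan is to recognise the functor as extension of the coefficient field from $\ekd$ to its completion $\ek$, and then to deduce both properties from the faithful flatness of this extension. Since $(X,Y',\mathfrak{P}')$ is by definition the base change of $(X,Y,\mathfrak{P})$ along $\cur{V}\pow{t}\hookrightarrow \mathcal{O}_{\ek}$, passing to tubes identifies, on each affinoid piece of a strict neighbourhood of $]X[_{\mathfrak{P}}$, the overconvergent structure ring $\cur{O}^\dagger_{\mathfrak{P}'}$ of the $\ek$-frame with the coefficient base change $\cur{O}^\dagger_{\mathfrak{P}}\,\widehat{\otimes}_{\ekd}\,\ek$ of that of the $\ekd$-frame. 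The functor in question is then $E\mapsto E\,\widehat{\otimes}_{\ekd}\,\ek$ on the underlying coherent modules, together with the induced connection; since on a smooth frame the underlying module of an object of $\mathrm{MIC}^\dagger$ is locally free of finite rank, everything is controlled by this coefficient extension.

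First I would record the flatness input. The ring $\mathcal{O}_{\ek}$ is the $\pi$-adic completion of the Henselian discrete valuation ring $\mathcal{O}_{\ekd}=\rk^{\mathrm{int}}$, so $\mathcal{O}_{\ekd}\to\mathcal{O}_{\ek}$ is faithfully flat, and the induced extension of fraction fields $\ekd\to \ek$ is automatically faithfully flat. The second, geometric, step is to promote this to the rings of overconvergent functions on the tube: extension of the ground field is faithfully flat for (dagger-)affinoid algebras, so $\cur{O}^\dagger_{\mathfrak{P}}\to \cur{O}^\dagger_{\mathfrak{P}'}$ is faithfully flat on each affinoid piece. Granting this, exactness is immediate, since a short exact sequence of the underlying locally free modules stays exact after a flat base change and the connections are carried along functorially. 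Faithfulness follows because, the modules being finitely presented, $\mathrm{Hom}$ commutes with the flat base change and the natural map $\mathrm{Hom}_{\cur{O}^\dagger_{\mathfrak{P}}}(E,E')\hookrightarrow \mathrm{Hom}_{\cur{O}^\dagger_{\mathfrak{P}}}(E,E')\,\widehat{\otimes}_{\ekd}\,\ek$ is injective; hence a nonzero horizontal morphism cannot become zero. As exactness and faithfulness may both be checked locally, it suffices to run this argument on an affinoid cover of the tube.

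The main obstacle is the second step, namely verifying the faithful flatness of the overconvergent structure sheaves $\cur{O}^\dagger_{\mathfrak{P}}\to\cur{O}^\dagger_{\mathfrak{P}'}$ rather than merely of the coefficient fields. Concretely one must check that extending the bounded Robba coefficients $\ekd$ to the Amice coefficients $\ek$ induces a faithfully flat map on the functions converging on some strict neighbourhood of $]X[_{\mathfrak{P}}$, which requires comparing the strict neighbourhoods attached to the two frames and controlling the completed tensor product at the dagger-affinoid level. I expect this to reduce, via Noetherianity of the relevant weakly complete affinoid algebras together with the standard faithful flatness of base-field extension in rigid geometry, to the faithful flatness of $\ek/\ekd$ already established; once it is in place, exactness and faithfulness are formal consequences.
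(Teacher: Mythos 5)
There is a genuine gap at your ``second step''. The faithful flatness of $\cur{O}^\dagger_{\mathfrak{P}}\to\cur{O}^\dagger_{\mathfrak{P}'}$ is precisely what is \emph{not} available in this setting, and the route you propose to it does not apply. The standard faithful flatness of base-field extension in rigid geometry concerns affinoid algebras over a complete nonarchimedean field, extended along an extension of complete fields. Here $\ekd$ is not complete, and, more importantly, after localising the relevant rings are not affinoid over a field at all: the paper works on the quasi-compact pieces $V_{n,m}$, which are of finite type over $\spa{\cur{E}_m,\cur{O}_{\cur{E}_m}}$ with $\cur{E}_m=S_K\tate{T}/(t^mT-\pi)$, and the base change is along $\cur{E}_m\to\ek$, not along an extension of complete base fields. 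Noetherianity of these weakly complete algebras, which you invoke in order to transfer the faithful flatness of $\ek/\ekd$ upward, is not established (the framework of \cite{LP16} is set up largely to avoid relying on it), and without it neither flatness of the completed tensor product nor the identification of $-\otimes$ with $-\widehat{\otimes}$ beyond finitely generated modules is formal. So the core of your argument rests on an unproved, and in this generality unjustified, flatness assertion.

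The paper's proof sidesteps flatness of $A_{n,m}\to A'_{n,m}$ entirely. Since $-\otimes_{A_{n,m}}A'_{n,m}$ is clearly right exact, only left exactness on finitely generated modules is needed, and there it agrees with the completed tensor product $-\widehat{\otimes}_{\cur{E}_m}\ek$. Passing to the integral models $B_{n,m}$ and to finitely generated, $p$-torsion free modules, one writes
\[ M\,\widehat{\otimes}_{\cur{O}_{\cur{E}_m}}\cur{O}_{\ek}=\varprojlim_n\left( M/p^nM\otimes_{\cur{O}_{\cur{E}_m}/p^n}\cur{O}_{\ek}/p^n\right) \]
and concludes from left exactness of $\varprojlim$, flatness at each \emph{finite} level $\cur{O}_{\cur{E}_m}/p^n\to\cur{O}_{\ek}/p^n$, and exactness of $M\mapsto M/p^nM$ on $p$-torsion free modules. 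The missing idea in your proposal is exactly this d\'evissage to the integral, mod-$p^n$ level, where flatness becomes checkable, combined with the limit argument. Two smaller points: faithfulness is not proved in the paper by your Hom-and-flat-base-change argument but quoted directly from \cite[Lemma 2.59]{LP16}; and your appeal to local freeness of objects of $\mathrm{MIC}^\dagger$ is both unjustified in this relative setting and unnecessary --- the paper reduces to arbitrary coherent $j_X^\dagger\cur{O}$-modules (with no connection and no freeness hypothesis) on the quasi-compact tubes $[Y]_n$.
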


\begin{proof} Faithfulness follows from \cite[Lemma 2.59]{LP16}. To show exactness, the question is local on $\mathfrak{P}$, which I may therefore assume to be affine, and on $X$, which I may therefore assume to be of the form $X=Y\cap D(g)$ for some $g\in \cur{O}_\mathfrak{P}$. Let $f_1,\ldots,f_r\in \cur{O}_\mathfrak{P}$ be such that $Y=P\cap V(f_1,\ldots,f_r)$ (here $P$ is the closed fibre of $\mathfrak{P}$.

The exactness question being local on $]Y[_\mathfrak{P}$, it suffices to work instead on each of the quasi-compact tubes
\[ [Y]_n:=\left\{\left. x\in \mathfrak{P}_K\right\vert v_x(\pi^{-1}f_i^n)\leq 1\;\forall i \right\} \]
which cover $]Y[_\mathfrak{P}$, note also that it suffices to treat the case of coherent $j_X^\dagger\cur{O}_{[Y]_n}$-modules (without integrable connection), i.e. to show that the functor
\[ \mathrm{Coh}\left( j_X^\dagger\cur{O}_{[Y]_n} \right) \rightarrow \mathrm{Coh}\left(j_X^\dagger\cur{O}_{[Y']_n} \right) \]
is exact. Let $V_{n,m}$ denote the open subset of $[Y]_n$ defined by 
\[ V_{n,m}:= \left\{\left. x\in [Y]_n \right\vert v_x(\pi^{-1} g^{m}) \geq 1 \right\}.\]
Write
\[ \cur{E}_m = \frac{S_K\tate{T}}{(t^mT-\pi)},\;\; \cur{O}_{\cur{E}_m} = \left(\frac{\cur{V}\pow{t}\tate{T}}{(t^mT-\pi)}\right)^\circ \]
where the $(-)^\circ$ means integral closure, so that $\spa{\cur{E}_m,\cur{O}_{\cur{E}_m}}$ is a finite type adic space over $\D^b_K$, and $V_{n,m}$ is of finite type over $\spa{\cur{E}_m,\cur{O}_{\cur{E}_m}}$. Let $V'_{n,m}=V_{n,m}\times_{\spa{\cur{E}_m,\cur{O}_{\cur{E}_m}}} \spa{\ek,\cur{O}_{\ek}}$. Then applying \cite[Lemma 2.66]{LP16}, together with its `classical' analogue, it suffices to show that the functor
\[ \mathrm{Coh}\left( \cur{O}_{V_{n,m}} \right) \rightarrow \mathrm{Coh}\left(\cur{O}_{V_{n,m}'} \right) \]
is exact. Write $A_{n,m}:=\Gamma(V_{n,m},\cur{O}_{V_{n,m}})$, $A'_{n,m}:=\Gamma(V'_{n,m},\cur{O}_{V'_{n,m}})$, then what I need to show then is that the functor $-\otimes_{A_{n,m}} A'_{n,m}$ is exact for finitely generated $A_{n,m}$-modules. 

Since it is clearly right exact, it suffices to show left exactness. Note that for coherent $A_{n,m}$-modules the functor $-\otimes_{A_{n,m}} A'_{n,m}$ can be identified with the completed tensor product functor $-\widehat{\otimes}_{A_{n,m}} A'_{n,m}$, and hence, since
\[ A'_{n,m}\cong A_{n,m}\widehat{\otimes}_{\cur{E}_m} \cur{E}_K, \]
with the completed tensor product functor $-\widehat{\otimes}_{\cur{E}_m} \cur{E}_K$. Next, define $B_{n,m}=\Gamma(V_{n,m},\cur{O}^+_{V_{n,m}})$, and similarly $B'_{n,m}=\Gamma(V'_{n,m},\cur{O}^+_{V'_{n,m}})$. Since the functor $-\otimes_{\Z_p} \Q_p$ inverting $p$ is exact, it suffices to show that the functor $ -\widehat{\otimes}_{\cur{O}_{\cur{E}_m}} \cur{O}_{\ek} $ is left exact on the exact (but not abelian) category of finitely generated, $p$-torsion free $B_{n,m}$-modules. Since this completed tensor product functor can now be written as
\[ M \widehat{\otimes}_{\cur{O}_{\cur{E}_m}} \cur{O}_{\ek} = \varprojlim_n\left( M/p^nM \otimes_{\cur{O}_{\cur{E}_m}/p^n} \cur{O}_{\ek}/p^n\right) \]
it simply suffices to note that projective limits are left exact, $\cur{O}_{\cur{E}_m}/p^n\rightarrow \cur{O}_{\ek}/p^n$ is flat, and $M\mapsto M/p^nM$ is exact on the category of $p$-torsion free $B_{n,m}$-modules.
\end{proof}

\begin{corollary} Let $X/F$ be geometrically connected, and $x\in X(F)$ a point. Then $\mathrm{Isoc}^\dagger(X/\ekd)$ is a neutral Tannakian category over $\ekd$, and 
\[ x^*:\mathrm{Isoc}^\dagger(X/\ekd)\rightarrow \mathrm{Vec}_{\ekd} \]
is a fibre functor.
\end{corollary}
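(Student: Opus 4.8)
The plan is to deduce everything from the corresponding, and classical, statement over the Amice ring $\ek$ by faithfully flat descent along the field extension $\ekd\hookrightarrow\ek$, with Lemma \ref{ef} providing the crucial exactness of base change. Recall that over $\ek$ the category $\isoc{X/\ek}$ is a neutral Tannakian category with fibre functor $x^*$ (see \cite{Ber96b}); in particular it is abelian and rigid, with $\mathrm{End}(\mathbf{1})=\ek$. By the recognition theorem for neutral Tannakian categories it therefore suffices to check that $\isoc{X/\ekd}$ is an $\ekd$-linear, abelian, rigid tensor category with $\mathrm{End}(\mathbf{1})=\ekd$, and that $x^*$ is an exact, faithful, $\ekd$-linear tensor functor.

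The rigid tensor structure is the easy part. Tensor products, internal homs and duals of $\ekd$-isocrystals are constructed exactly as in the classical case, using that the underlying objects are finite locally free; the associativity and commutativity constraints and the rigidity (evaluation and coevaluation) are then formal. That $\mathrm{End}(\mathbf{1})=\ekd$ follows from the geometric connectedness of $X$, which forces the horizontal overconvergent functions on $X$ to be the constants $\ekd$. Gluing the frame-local functors of Lemma \ref{ef} produces a base change functor
\[ \beta:\isoc{X/\ekd}\longrightarrow\isoc{X/\ek}, \]
which by that lemma is exact and faithful; note that $\ekd$ and $\ek$ are both fields, so $\ekd\hookrightarrow\ek$ is faithfully flat and $-\otimes_{\ekd}\ek$ is exact and faithful.

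The main work is abelianness. Given a morphism $f\colon M\to N$ in $\isoc{X/\ekd}$, I would form its kernel, image and cokernel in the ambient abelian category of coherent $j^\dagger\cur{O}$-modules with integrable connection over $\ekd$. These are a priori only coherent modules with connection, and the point is to show they are genuine isocrystals, i.e. finite locally free and overconvergent. Since $-\otimes_{\ekd}\ek$ is flat it commutes with the formation of kernels and cokernels, and Lemma \ref{ef} identifies the results with $\beta$ applied to these constructions; as $\isoc{X/\ek}$ is abelian, $\beta(\ker f)$, $\beta(\mathrm{im}\,f)$ and $\beta(\mathrm{coker}\,f)$ are honest $\ek$-isocrystals, in particular finite locally free and overconvergent. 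Both properties then descend to $\ekd$ along the faithfully flat map $\ekd\to\ek$: finite local freeness by faithfully flat descent of coherent modules, and overconvergence by the local analysis underlying Lemma \ref{ef} (the faithfully flat maps $\cur{O}_{\cur{E}_m}\to\cur{O}_{\ek}$ appearing in its proof). This is the step I expect to be the main obstacle, since it is exactly here that one must know the overconvergence condition can be checked after base change to $\ek$; everything else is either formal or immediate from the classical theory.

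Finally, for the fibre functor, observe that there is a canonical isomorphism of functors
\[ \bigl(x^*_{\ekd}(-)\bigr)\otimes_{\ekd}\ek\;\cong\;x^*_{\ek}\bigl(\beta(-)\bigr), \]
both sides computing the fibre at $x$ over $\ek$. The right-hand composite is exact and faithful, being the composite of the exact faithful functor $\beta$ with the fibre functor $x^*_{\ek}$ of the classical Tannakian category. Since $-\otimes_{\ekd}\ek\colon\mathrm{Vec}_{\ekd}\to\mathrm{Vec}_{\ek}$ is exact and faithful, it reflects exactness (a complex of $\ekd$-vector spaces is exact iff it is after $\otimes_{\ekd}\ek$) and detects zero objects, so I conclude that $x^*_{\ekd}$ is itself exact and faithful; that it is an $\ekd$-linear tensor functor is clear from the construction. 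Applying the recognition theorem to the rigid abelian tensor category $\isoc{X/\ekd}$ equipped with the fibre functor $x^*$ then shows it is a neutral Tannakian category over $\ekd$, as claimed.
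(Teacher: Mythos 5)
Your proposal follows the same skeleton as the paper's proof: both rest on Lemma \ref{ef} (exactness and faithfulness of the base change functor $\isoc{X/\ekd}\rightarrow\isoc{X/\ek}$), the classical fact that $\isoc{X/\ek}$ is neutral Tannakian with fibre functor $x^*$, the computation $\mathrm{End}(\mathbf{1})=\ekd$ (the paper deduces this from connectedness of the tube $]X[_\mathfrak{P}$, you from geometric connectedness of $X$ --- the same point), and the observation that since $-\otimes_{\ekd}\ek$ on vector spaces reflects exactness and detects zero objects, the commuting square forces $x^*$ over $\ekd$ to be exact and faithful. The paper compresses everything beyond the $\mathrm{End}(\mathbf{1})$ computation into ``the claim is now straightforward,'' implicitly leaning on the structural results of \cite{LP16}; you instead spell out the abelianness of $\isoc{X/\ekd}$, and it is exactly there that your argument has a genuine soft spot.

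Your descent step is both unjustified as stated and unnecessary. Unjustified: the functor $\beta$ is not the literal scalar extension $-\otimes_{\ekd}\ek$ of modules over a ring, but a completed base change on the tubes (locally $-\widehat{\otimes}_{\cur{E}_m}\ek$ over the rings $A_{n,m}$ appearing in the proof of Lemma \ref{ef}), so ordinary faithfully flat descent of finite local freeness does not apply off the shelf; and flatness of the maps $\cur{O}_{\cur{E}_m}/p^n\rightarrow \cur{O}_{\ek}/p^n$ gives no descent of overconvergence, which is a statement about where the Taylor series of the connection converges, not a module-theoretic property reflected by flat maps. Unnecessary: if $f:M\rightarrow N$ is a horizontal map between overconvergent objects, then the Taylor isomorphisms of $\ker f$, $\mathrm{im}\, f$ and $\mathrm{coker}\, f$ are induced by restriction and quotient from those of $M$ and $N$, which already converge on a strict neighbourhood of the diagonal, so overconvergence of subquotients is immediate with no passage to $\ek$ at all. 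Note also that in the paper's definition an overconvergent isocrystal is merely a \emph{coherent} $j_X^\dagger\cur{O}$-module with overconvergent integrable connection, so finite local freeness is not something you need to verify for kernels and cokernels in the first place (it is a theorem about such objects, not part of the definition). Replacing your descent paragraph by this direct restriction argument repairs the proposal and brings it in line with what the paper's ``straightforward'' actually covers; the rest of your argument (rigid tensor structure, $\mathrm{End}(\mathbf{1})$, and the fibre functor via the commuting square) matches the paper's route.
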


\begin{proof} There is a commutative diagram
\[ \xymatrix{ \mathrm{Isoc}^\dagger(X/\ekd)\ar[r]^-{x^*}\ar[d] & \mathrm{Vec}_{\ekd}\ar[d] \\ \mathrm{Isoc}^\dagger(X/\ek) \ar[r]^-{x^*} & \mathrm{Vec}_{\ek}  }\]
with $\isoc{X/\ek}$ neutral Tannakian over $\ek$ and $x^*:\isoc{X/\ek}\rightarrow \mathrm{Vec}_{\ek}$ a fibre functor. Since both vertical arrows are faithful and exact (and commute with taking duals), the claim is now straightforward, once it has been verified that $\mathrm{End}_{\isoc{X/\ekd}}(\mathcal{O}_{X/\ekd}^\dagger)=\ekd$. This follows from the fact that the tube $]X[_\mathfrak{P}$ is connected.
\end{proof}

Now let $\cur{N}\mathrm{Isoc}^\dagger(X/\ekd)\subset \mathrm{Isoc}^\dagger(X/\ekd)$ denote the full subcartegory of unipotent objects, i.e. those which are iterated extensions of the unit object.

\begin{corollary} In the above situation, $\cur{N}\mathrm{Isoc}^\dagger(X/\ekd)$ is a neutral Tannakian category over $\ekd$, and 
\[ x^*:\cur{N}\mathrm{Isoc}^\dagger(X/\ekd)\rightarrow \mathrm{Vec}_{\ekd} \]
is a fibre functor.
\end{corollary}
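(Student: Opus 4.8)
The plan is to realise $\cur{N}\mathrm{Isoc}^\dagger(X/\ekd)$ as a full Tannakian subcategory of $\mathrm{Isoc}^\dagger(X/\ekd)$, and then to invoke the general principle that such a subcategory of a neutral Tannakian category is again neutral Tannakian, with fibre functor the restriction of the ambient one. By the preceding corollary, $\cur{T} := \isoc{X/\ekd}$ is neutral Tannakian over $\ekd$ with fibre functor $\omega := x^*$; write $G = \underline{\mathrm{Aut}}^\otimes(\omega)$ for its Tannakian fundamental group, so that $\cur{T}\simeq \mathrm{Rep}_{\ekd}(G)$. It then suffices to check that the strictly full subcategory $\cur{N}$ of unipotent objects contains the unit object $\mathbf{1}=\mathcal{O}_{X/\ekd}^\dagger$ and is stable under finite direct sums, tensor products, duals, and the formation of subquotients; granting this, the standard formalism (Deligne--Milne) shows that $\cur{N}$ corresponds under the equivalence $\cur{T}\simeq \mathrm{Rep}_{\ekd}(G)$ to $\mathrm{Rep}_{\ekd}(Q)$ for a quotient $G\twoheadrightarrow Q$, whence $\cur{N}$ is neutral Tannakian and $\omega|_{\cur{N}}$ is a fibre functor.

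Closure under $\mathbf{1}$, direct sums, and extensions is immediate from the definition of unipotency. For tensor products and duals I would argue with filtrations: if $M$ admits a filtration with quotients isomorphic to powers of $\mathbf{1}$, and likewise $N$, then tensoring the filtration of $M$ with $N$ and refining by that of $N$ produces, using exactness of $-\otimes N$ and the identity $\mathbf{1}\otimes\mathbf{1}=\mathbf{1}$, a filtration of $M\otimes N$ with trivial quotients; dually, reversing the filtration of $M$ and using $\mathbf{1}^\vee=\mathbf{1}$ exhibits $M^\vee$ as unipotent.

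The substantive point --- and the one I expect to be the main obstacle --- is stability under subquotients, which is not formal in a general abelian tensor category. Here I would pass to the fibre functor: if $M$ is unipotent then $\omega(M)$ carries a $G$-stable filtration with trivial quotients, so the image $\bar{G}$ of $G$ in $\mathrm{GL}(\omega(M))$ is a unipotent algebraic subgroup. For any subobject $N\subset M$ the subspace $\omega(N)\subset \omega(M)$ is $\bar{G}$-stable, and since we are in characteristic zero every finite-dimensional representation of a unipotent group is itself unipotent; hence $\omega(N)$ and $\omega(M)/\omega(N)$ are successive extensions of the trivial representation, which is to say $N$ and $M/N$ are unipotent. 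This is exactly the closure under subquotients required, and in particular makes $\cur{N}$ an abelian subcategory, completing the verification that it is a Tannakian subcategory.

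Finally, the fibre functor claim needs nothing further: $\omega|_{\cur{N}}$ is the restriction of an exact, faithful, $\ekd$-linear tensor functor to $\mathrm{Vec}_{\ekd}$, and these properties are inherited by restriction, so it is a fibre functor and $\cur{N}$ is neutral. Alternatively, one could avoid explicit recourse to $G$ by reducing stability under subquotients to the corresponding classical statement over $\ek$ via the faithful exact functor of the preceding corollary, but this would require descending the unipotent filtration from $\ek$ to $\ekd$, which seems less direct than the group-theoretic argument above.
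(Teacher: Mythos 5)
Your proof is correct. The paper states this corollary without proof, treating it as an immediate consequence of the preceding corollary, and your argument --- realising $\cur{N}\mathrm{Isoc}^\dagger(X/\ekd)$ as a strictly full subcategory of $\mathrm{Isoc}^\dagger(X/\ekd)$ containing $\mathbf{1}$ and closed under sums, tensors, duals and subquotients (the last via unipotence of the image of the Tannaka group and the Kolchin-type fact that subrepresentations of unipotent representations are unipotent), so that the restricted $x^*$ remains an exact faithful tensor functor --- is precisely the standard verification the paper leaves implicit.
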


Hence I may define the (unipotent) rigid fundamental group of $X/\ekd$ (at least as an affine group scheme over $\ekd$) as $\pi_1^\rig(X/\ekd,x):=\underline{\mathrm{Aut}}^\otimes(x^*)$. It will also be important to know that certain categories of `absolute' coefficients as introduced in \cite{LP16} are also Tannakian, but this time over $K$ (or subfields thereof).

\begin{corollary}\label{tann2}  Let $X/F$ be geometrically connected. Then the category $\isoc{X/K}$ is Tannakian over $K$, and $\fisoc{X/K}$ is Tannakian over $K^{\sigma}$.
\end{corollary}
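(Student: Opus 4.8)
The plan is to deduce both assertions from the already-established fact that $\isoc{X/\ekd}$ is a neutral Tannakian category over $\ekd$, by realising $\isoc{X/K}$ and $\fisoc{X/K}$ as categories of objects of $\isoc{X/\ekd}$ carrying extra structure, and then invoking Deligne's recognition criterion. Concretely, there are natural forgetful functors
\[ \fisoc{X/K} \to \isoc{X/K} \to \isoc{X/\ekd}, \]
the first forgetting the Frobenius $\varphi$ and the second forgetting the part $\nabla_{\partial_t}$ of the connection in the $t$-direction. It then suffices to verify that each of $\isoc{X/K}$ and $\fisoc{X/K}$ is an abelian, rigid tensor category, $K$-linear (resp. $K^{\sigma}$-linear), whose unit object has endomorphism ring $K$ (resp. $K^{\sigma}$), and which admits a fibre functor to $\mathrm{Vec}_{K'}$ for some extension field $K'$.

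First I would transport the abelian and rigid tensor structures along these forgetful functors. The key observation is that the extra data $\nabla_{\partial_t}$ and $\varphi$ are compatible with all the tensor-theoretic constructions: kernels and cokernels (computed in $\isoc{X/\ekd}$) inherit a canonical connection along $t$ and a canonical Frobenius, as do duals, internal Homs and tensor products. Hence every such construction performed in $\isoc{X/\ekd}$ lifts uniquely to $\isoc{X/K}$ and to $\fisoc{X/K}$, which shows that both categories are abelian rigid tensor categories and that the forgetful functors are exact, faithful and tensor-compatible. The $K$-linearity of $\isoc{X/K}$ is immediate, since multiplication by a constant $\lambda\in K$ is horizontal for $\partial_t$. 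The reason $\fisoc{X/K}$ is only $K^{\sigma}$-linear is that, for a morphism $f$ commuting with $\varphi$ and a scalar $\lambda\in K$, one has $(\lambda f)\varphi_E=\lambda\varphi_{E'}\sigma^*f$ while $\varphi_{E'}\sigma^*(\lambda f)=\sigma(\lambda)\varphi_{E'}\sigma^*f$, so $\lambda f$ again commutes with $\varphi$ precisely when $\sigma(\lambda)=\lambda$.

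The crucial computation is that of the endomorphisms of the unit object. Starting from $\mathrm{End}_{\isoc{X/\ekd}}(\mathcal{O}_{X/\ekd}^\dagger)=\ekd$ (established in the proof that $\isoc{X/\ekd}$ is Tannakian, using connectedness of the tube $]X[_\mathfrak{P}$), an endomorphism of the unit in $\isoc{X/K}$ is an element $f\in\ekd$ which is in addition horizontal for $\partial_t$. Since $K$ has characteristic $0$, the kernel of $\partial_t$ on $\ekd=\bigl\{\sum_i a_i t^i\bigr\}$ is exactly the constants $K$, so $\mathrm{End}_{\isoc{X/K}}(\mathcal{O}_{X/K}^\dagger)=K$. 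Imposing further Frobenius-invariance cuts this down to $\{f\in K \mid \sigma(f)=f\}=K^{\sigma}$, giving $\mathrm{End}_{\fisoc{X/K}}(\mathcal{O}_{X/K}^\dagger)=K^{\sigma}$. Finally, composing the forgetful functors with the fibre functor $x^*:\isoc{X/\ekd}\to\mathrm{Vec}_{\ekd}$ produces exact, faithful tensor functors $\isoc{X/K}\to\mathrm{Vec}_{\ekd}$ and $\fisoc{X/K}\to\mathrm{Vec}_{\ekd}$; as $\ekd$ is a field extension of both $K$ and $K^{\sigma}$, these are fibre functors over an extension field, and the recognition criterion then yields that $\isoc{X/K}$ is Tannakian over $K$ and $\fisoc{X/K}$ is Tannakian over $K^{\sigma}$.

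I expect the main obstacle to be checking that the candidate fibre functor is genuinely exact and faithful, and that rigidity really holds inside the absolute categories, i.e. that the dual connection $\nabla_{\partial_t}$ and dual Frobenius on $E^\vee$ remain overconvergent and that $x^*$ stays exact once the $t$-connection is added. These points, however, follow from the corresponding facts in the $\ekd$-theory together with the exactness and faithfulness already recorded in Lemma \ref{ef} and the preceding corollaries; the genuinely new content is the endomorphism computation above, which is precisely what pins down the correct base field, $K$ versus $K^{\sigma}$.
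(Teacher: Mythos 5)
Your proposal is correct and follows essentially the same route as the paper: the paper's proof likewise observes that the forgetful functors $(F\text{-})\mathrm{Isoc}^\dagger(X/K)\rightarrow \mathrm{Isoc}^\dagger(X/\ekd)$ are faithful, exact tensor functors into an already-Tannakian category, and then reduces everything to computing the endomorphism ring of the unit object, which is $K$ (kernel of $\partial_t$ on $\ekd$) in the first case and $K^\sigma$ after imposing Frobenius-invariance in the second. Your write-up merely makes explicit the transport-of-structure and recognition-criterion details that the paper leaves implicit.
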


\begin{proof} Since $(F\text{-})\isoc{X/K}\rightarrow \isoc{X/\ekd}$ is a faithful, exact, tensor functor, it suffices to calculate the endomorphism group of the unit object in both categories, which is $K$ for $\isoc{X/K}$ and $K^{\sigma}$ for $\fisoc{X/K}$.
\end{proof}

Another important result will be the fact that rigid cohomology $H^1_\rig(X/\ekd,-)$ computes the extension groups in $\mathrm{Isoc}^\dagger(X/\ekd)$. This is the essential content of the following result.

\begin{proposition} \label{exts} Let $(X,Y,\mathfrak{P})$ be a smooth frame. Then the full subcategories
\begin{align*} \mathrm{MIC}^\dagger((X,Y,\mathfrak{P})/\ekd) &\subset  \mathrm{MIC}((X,Y,\mathfrak{P})/\ekd)  \\
\mathrm{MIC}^\dagger((X,Y,\mathfrak{P})/K) &\subset  \mathrm{MIC}((X,Y,\mathfrak{P})/K) 
\end{align*}
of overconvergent connections are stable under extensions.
\end{proposition}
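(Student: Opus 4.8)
The statement to prove is that in a short exact sequence $0\to M'\to M\to M''\to 0$ of objects of $\mathrm{MIC}((X,Y,\mathfrak{P})/\ekd)$ with $M'$ and $M''$ overconvergent, the middle term $M$ is again overconvergent (fullness of the inclusion being automatic). The plan is to argue locally and reduce everything to a single $p$-adic estimate. Overconvergence is the condition that the Taylor isomorphism converges on a strict neighbourhood of the diagonal, which is local on $\mathfrak{P}$ and on the tube, so I would first reduce to the case that $\mathfrak{P}$ is affine and carries local coordinates $t_1,\dots,t_d$ étale over $\cur{V}\pow{t}$. In this situation a connection $(N,\nabla)$ is encoded by its divided operators $\nabla^{[\underline k]}$, and the usual operator-theoretic description of overconvergence (cf. \cite{Ber96b}) says that $(N,\nabla)$ is overconvergent if and only if there exists $\eta>1$ with $\Norm{\nabla^{[\underline k]}}\eta^{|\underline k|}$ bounded as $|\underline k|\to\infty$, the norm being the sup-norm of sections over the relevant quasi-compact tubes. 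Since $M''$ is overconvergent it is in particular locally free, so the underlying sequence of $j_X^\dagger\cur{O}$-modules splits locally; fixing such a splitting $M\cong M'\oplus M''$, the connection on $M$ becomes block upper-triangular with diagonal entries $\nabla',\nabla''$ and an $\cur{O}$-linear off-diagonal term $\beta=(\beta_1,\dots,\beta_d)$ recording the extension class.

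The heart of the matter is then the off-diagonal block of the iterated operator $\nabla^{[\underline k]}$, which is a convolution-type sum of terms of the shape $\lambda\,\nabla'^{[\underline i]}\beta_l\,\nabla''^{[\underline j]}$ with $\underline i+\underline j+e_l=\underline k$ and coefficients $\lambda$ given by factorial ratios of the form $\underline i!\,\underline j!/\underline k!$. Contrary to what one might hope, these coefficients do \emph{not} have $p$-adic norm $\le 1$; but by Legendre's formula (counting carries) their absolute values grow at most polynomially in $|\underline k|$, and the number of summands is likewise polynomial, so both can be absorbed into a single polynomial factor $P(|\underline k|)$. Choosing a common $\eta>1$ witnessing the overconvergence of \emph{both} $M'$ and $M''$, so that $\Norm{\nabla'^{[\underline i]}}\le C'\eta^{-|\underline i|}$ and $\Norm{\nabla''^{[\underline j]}}\le C''\eta^{-|\underline j|}$, and picking any $1<\eta_0<\eta$, the ultrametric inequality gives
\[
\Norm{\bigl(\nabla^{[\underline k]}\bigr)_{\mathrm{off}}}\,\eta_0^{|\underline k|}
\ \le\ C'C''\,\eta\,\max_l\Norm{\beta_l}\;P(|\underline k|)\,(\eta_0/\eta)^{|\underline k|}
\ \xrightarrow[\;|\underline k|\to\infty\;]{}\ 0 ,
\]
since the geometric decay $(\eta_0/\eta)^{|\underline k|}$ beats the polynomial $P$. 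As the diagonal blocks already satisfy the overconvergence estimate with radius $\eta>\eta_0$, this shows $\Norm{\nabla^{[\underline k]}}\eta_0^{|\underline k|}$ is bounded, i.e. $M$ is overconvergent.

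The logarithmic/absolute statement for $\mathrm{MIC}^\dagger((X,Y,\mathfrak{P})/K)\subset\mathrm{MIC}((X,Y,\mathfrak{P})/K)$ follows by the identical argument, the only change being that the derivation in the base direction $\partial_t$ is now included among the coordinate directions, so that the divided operators $\nabla^{[\underline k]}$ range over one further multi-index slot; the block-triangular computation and the estimate above are unaffected. I expect the main obstacle to be precisely the $p$-adic bookkeeping of the off-diagonal convolution term: the naive hope that the factorial coefficients are integral fails, and the argument only closes because their growth is subexponential and is dominated by the common geometric convergence radius shared by $M'$ and $M''$. A secondary point that must be handled with care is the reduction to the block form, which relies on the (standard) local freeness of the overconvergent quotient $M''$ to split the underlying module sequence locally.
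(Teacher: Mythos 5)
Your proposal is sound in its core mechanism and, at the structural level, matches the paper's proof: both arguments localize to an affinoid on which $E_1$ and $E_3$ (hence $E_2$) are free, choose an adapted basis putting the connection in block upper-triangular form, and then win by playing a strictly smaller convergence radius against denominators of at most polynomial growth. The execution differs, though. The paper, following Chiarellotto--Le Stum, never expands the divided operators: it writes the formal Taylor isomorphism of $E_2$ as a block matrix $\bigl(\begin{smallmatrix} A & B \\ 0 & C \end{smallmatrix}\bigr)$ and derives the first-order identity $\partial_i(A^{-1}B)=A^{-1}p_2^*N_iC$, so that a single integration converts convergence of the right-hand side on the tube $[Y]^{(2)}_{n'}$ into convergence of $A^{-1}B$ on $[Y]^{(2)}_{n}$ for any $n<n'$; the divisions by integers incurred in integrating are exactly your polynomially-growing coefficients, beaten by the geometric gain from shrinking $n'$ to $n$. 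Your route instead expands the off-diagonal block of $\nabla^{[\underline k]}$ as a convolution and estimates the coefficients $\underline i!\,\underline j!/\underline k!$ by Legendre/Kummer, which is a legitimate, more explicit variant of the same estimate. One advantage of the paper's formulation is that it manipulates only matrices of functions, whereas your bound $\Norm{\nabla'^{[\underline i]}\beta_l\nabla''^{[\underline j]}}\leq\Norm{\nabla'^{[\underline i]}}\Norm{\beta_l}\Norm{\nabla''^{[\underline j]}}$ needs genuine operator norms on all sections, not merely bounds on a basis; by Leibniz this drags in the divided derivatives of functions on the tubes, which must themselves be controlled (they are, on the quasi-compact pieces $V_{n,m}$, with the same polynomial-versus-geometric margin), a point your sketch elides.

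One normalization must be repaired. Your criterion ``overconvergent iff there exists $\eta>1$ with $\Norm{\nabla^{[\underline k]}}\eta^{|\underline k|}$ bounded'' is false as literally stated: on the tubes in play here the coordinates satisfy $\norm{\tau_i}\leq\norm{\pi}^{1/n}<1$, so the radii that occur never exceed $1$; under the operator-norm reading even the unit object fails your condition (the divided derivatives $\partial^{[\underline k]}$ have norm $1$), and under the basis-matrix reading overconvergent objects need not have Taylor radius exceeding $1$. The correct condition is a uniformity statement over strict neighbourhoods with \emph{two} parameters: for every $n$ there exists $m$ such that the Taylor series converges at radius $\norm{\pi}^{1/n}$ on the quasi-compact piece $V_{n,m}$ --- this is precisely the bookkeeping behind the paper's ``after possibly increasing $m$'' and ``$n<n'$''. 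Your estimate survives the correction verbatim: fix the target radius $\norm{\pi}^{1/n}$, invoke the hypothesis for $E_1$ and $E_3$ at the strictly larger radius $\norm{\pi}^{1/n'}$ with $n'>n$, and the ratio $\norm{\pi}^{|\underline k|(1/n-1/n')}$ still dominates the polynomial factor. So the slip does not affect the architecture of your argument, but as written the characterization you induct from is not the definition of overconvergence in this setting and should be restated before the estimate is run.
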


\begin{proof} The proof is essentially the same in both cases, I will therefore only give the version relative to $\ekd$, following the proof of the corresponding result in classical rigid cohomology given in \cite{CLS99a}. The question is local on $\mathfrak{P}$ and $X$, I may therefore assume the former to be affine, $\mathfrak{P}\cong \spf{A}$, and $X=D(g)\cap Y$ for some $g\in A$. Choose functions $t_i\in A$ such that the $dt_i$ form a basis for $\Omega^1_{A/\cur{V}\pow{t}}$ in a neighbourhood of $X$, and let $\partial_i$ be the dual basis of the tangent bundle. 

Now let $0 \rightarrow E_1 \rightarrow E_2 \rightarrow E_3\rightarrow 0$ be an exact sequence in $\mathrm{MIC}((X,Y,\mathfrak{P})/\ekd)$ with $E_1$ and $E_2$ overconvergent, and fix integers $0\leq n,m$. Let $V_{n,m}\subset \mathfrak{P}_K$ be the open subspaces as considered above in the proof of Lemma \ref{ef}, and let $U\subset V_{n,m}$ be an open affinoid such that $E_1$ and $E_3$, and hence $E_2$, are free on $U$. Choose a basis for $E_1$ and extend to a basis for $E_3$, and for each $i$ let 
\[ \left( \begin{matrix} M_i & N_i \\ 0 & P_i \end{matrix} \right) \in \Gamma(U,\cur{O}_U) \] be the matrix representing the action of $\partial_i$ with respect to this basis.

Next fix an integer $n'>n$, let $p_i:\mathfrak{P}_K^2\rightarrow \mathfrak{P}_K$ denote the two projections, and write $V^{(2)}_{n',m}=[Y]^{(2)}_{n'} \cap p_1^{-1}(U)$ where the tube $[Y]^{(2)}_{n'}$ is taken inside $\mathfrak{P}_K^2$ (embedding $Y$ via the diagonal). Let $\cur{P}$ be the structure sheaf of the formal completion of the diagonal $\mathfrak{P}_K$ inside $\mathfrak{P}_K^2$, so that there is a natural inclusion $\Gamma(V^{(2)}_{n',m},\cur{O}_{V^{(2)}_{n',m}})\rightarrow \Gamma(U,\cur{P})$ expressing the latter as the formal completion of the former. Finally let 
\[  \left( \begin{matrix} A & B \\ 0 & C \end{matrix} \right)\in \Gamma(U,\cur{P}) \]
be the matrix of the formal Taylor isomorphism $ \cur{P}\otimes E_2\isomto E_2\otimes\cur{P}$ on $U$. Then overconvergence of $E_1$ and $E_3$ implies that after possibly increasing $m$, $A,C\in \Gamma(V^{(2)}_{n',m},\cur{O}_{V^{(2)}_{n',m}})\subset \Gamma(U,\cur{P})$, I want to show that in fact $B\in \Gamma(V^{(2)}_{n,m},\cur{O}_{V^{(2)}_{n,m}})$, where $V^{(2)}_{n,m}=[Y]^{(2)}_{n} \cap p_1^{-1}(U)$, again taking the tube $[Y]^{(2)}_{n}$ inside $\mathfrak{P}_K^2$.

To see this, note that the action of $\partial_i$ on the matrix $ \left( \begin{matrix} A & B \\ 0 & C \end{matrix} \right)$ is simply given by left multiplication by $ \left( \begin{matrix} p_2^*M_i & p_2^*N_i \\ 0 & p_2^*P_i \end{matrix} \right)$, from which, as in the proof of \cite[Proposition 1.2.2]{CLS99a}, follows the identity
\[ \partial_i(A^{-1}B)=A^{-1}p_2^*N_i C. \]
Since the RHS has coefficients in $\Gamma(V^{(2)}_{n',m},\cur{O}_{V^{(2)}_{n',m}})$ and $n<n'$ it follows that $A^{-1}B$, and therefore $B$, has coefficients in $\Gamma(V_{n,m},\cur{O}_{V_{n,m}})$, as required.

Finally, by covering $V_{n,m}$ with such $U$ and letting $n\rightarrow \infty$ shows that the Taylor isomorphism of $E_2$ converges in a strict neighbourhood of the diagonal, and hence $E_2$ is overconvergent as claimed.
\end{proof}

\begin{corollary} Let $X/F$ be a variety, and $E_1,E_2 \in \mathrm{Isoc}^\dagger(X/\ekd)$. Then there is an isomorphism
$$ \mathrm{Ext}^1_{ \mathrm{Isoc}^\dagger(X/\ekd)}(E_1,E_2) \cong H^1_\rig(X/\ekd,E_1^\vee\otimes E_2)
$$
natural in $E_i$ and $X$.
\end{corollary}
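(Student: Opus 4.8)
\section*{Proof proposal}

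The plan is to first reduce to the case of the unit object, then identify extensions with de Rham cohomology classes (this is where Proposition \ref{exts} is essential), and finally globalise. First I would use that $\isoc{X/\ekd}$ is a rigid abelian tensor category, with duals and with tensor product exact (tensoring by the locally free object $E_1^\vee$ preserves exactness), as follows from the Tannakian structure established in the preceding corollaries. This gives a functorial isomorphism $\mathrm{Ext}^1_{\isoc{X/\ekd}}(E_1,E_2) \cong \mathrm{Ext}^1_{\isoc{X/\ekd}}(\mathcal{O}_{X/\ekd}^\dagger, E_1^\vee\otimes E_2)$: given an extension of $E_1$ by $E_2$, tensoring by $E_1^\vee$ and pulling back along the coevaluation $\mathcal{O}_{X/\ekd}^\dagger \to E_1\otimes E_1^\vee$ produces an extension of the unit by $E_1^\vee\otimes E_2$, and the evaluation/coevaluation identities together with exactness of $-\otimes E_1^\vee$ show this assignment is a bijection. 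Writing $E:=E_1^\vee\otimes E_2$, it then remains to prove $\mathrm{Ext}^1_{\isoc{X/\ekd}}(\mathcal{O}_{X/\ekd}^\dagger, E)\cong H^1_\rig(X/\ekd, E)$, naturally in $E$ and $X$.

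Next I would give the de Rham description of extension classes, working locally on a smooth frame $(X,Y,\mathfrak{P})$. An extension $0\to E\to E'\to \mathcal{O}_{X/\ekd}^\dagger\to 0$ splits as a module, hence is determined by the off-diagonal entry $\omega$ of its connection matrix; integrability forces $\omega$ to be closed, and changing the splitting changes $\omega$ by an exact form. This sets up a bijection between such extensions and $H^1$ of the overconvergent de Rham complex $j_X^\dagger(E\otimes\Omega^\bullet)$ on the tube. The crucial point—and the place where Proposition \ref{exts} enters—is that the extension $E'$ built from a \emph{closed overconvergent} one-form $\omega$ is again overconvergent, so that it is the overconvergent cohomology $H^1_\rig(X/\ekd,E)$, rather than its convergent analogue, that classifies extensions inside $\isoc{X/\ekd}$. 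On an affine frame the higher coherent cohomology of the relevant tubes $V_{n,m}$ vanishes, so $H^1_\rig$ is exactly the $H^1$ of the complex of global sections, and the bijection above is precisely the desired isomorphism.

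Finally I would globalise. For general $X$ I would choose a finite cover by (simplicial) frames and compare the local-to-global $\mathrm{Ext}$ spectral sequence $E_2^{pq}=H^p(X,\mathcal{E}xt^q(\mathcal{O}_{X/\ekd}^\dagger, E))\Rightarrow \mathrm{Ext}^{p+q}$ with the hypercohomology spectral sequence computing $H^\bullet_\rig$ from the de Rham complex. The local computation of the previous paragraph identifies the local $\mathrm{Ext}$-sheaves $\mathcal{E}xt^q(\mathcal{O}_{X/\ekd}^\dagger, E)$ in degrees $0$ and $1$ with the de Rham cohomology sheaves $\mathcal{H}^q_{\dR}(E)$, so both spectral sequences share an $E_2$-page, and the low-degree exact sequence then yields the isomorphism on $H^1$ (only the sheaves in degrees $0$ and $1$ intervene). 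I expect the main obstacle to be exactly this globalisation: one must check that the local de Rham identification glues to an isomorphism of the two spectral sequences compatibly with their edge maps, and that the overconvergence bookkeeping supplied by Proposition \ref{exts} is uniform across the cover, since it is here that the interplay between the $H^1(\mathcal{H}^0)$ and $H^0(\mathcal{H}^1)$ contributions to $H^1_\rig$ must be controlled. Naturality in $E$ and $X$ is then inherited from the functoriality of the coevaluation map, of the de Rham complex, and of the chosen cover.
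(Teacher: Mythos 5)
Your proof is correct and follows essentially the same route as the paper: on a frame, $H^1_\rig(X/\ekd,E_1^\vee\otimes E_2)$, viewed as overconvergent de Rham cohomology, computes $\mathrm{Ext}^1$ in $\mathrm{MIC}((X,Y,\mathfrak{P})/\ekd)$, Proposition \ref{exts} (stability of $\mathrm{MIC}^\dagger$ under extensions) identifies this with $\mathrm{Ext}^1$ in $\isoc{X/\ekd}$, and the general case is handled by gluing over a cover. Your explicit reduction to the unit object via coevaluation, the cocycle description of extension classes, and the spectral-sequence comparison are all fleshed-out versions of what the paper compresses into its one-line appeal to Zariski descent.
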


\begin{proof} If $(X,Y,\mathfrak{P})$ is a smooth and proper frame, then $H^1_\rig(X/\ekd,E_1^\vee\otimes E_2)$, as a de Rham cohomology group, computes the extension group in the category $ \mathrm{MIC}((X,Y,\mathfrak{P})/\ekd)$, by the previous proposition this coincides with the extension group in the category $ \mathrm{Isoc}^\dagger(X/\ekd)$. The non-embeddable case is then deduced using Zariski descent.
\end{proof}

\begin{remark} If $E \in\cur{N}\isoc{X/\ekd}$, repeated applications of the five lemma show that $H^1_\rig(X/\ekd,E)$ is finite dimensional over $\ekd$.
\end{remark}

For `absolute' coefficient objects $E\in\fisoc{X/K}$ the cohomology groups $H^i_\rig(X/\ekd,E)$ play the role of the higher direct images of $E$ under the structure map $X\rightarrow \spec{F}$, via the identification
\[ \pnm{\ekd} \cong \fisoc{\spec{F}/K}  \]
from \cite{LP16}. There are a few results centred on this way of viewing things that will be needed, specifically an adjoint functor to $H^0_\rig(X/\ekd,-)$, a projection formula, and a `Leray spectral sequence'. The first two of these are easy.

\begin{proposition}\label{proj} The functor
\begin{align*} \pnm{\ekd}&\rightarrow \fisoc{X/K} \\
M&\mapsto M\otimes \mathcal{O}_{X/K}^\dagger \end{align*}
is left adjoint to
\begin{align*} \fisoc{X/K} &\rightarrow \pnm{\ekd} \\
E&\mapsto H^0_\rig(X/\ekd,E), \end{align*}
and for any $E\in \fisoc{X/K}$, $M\in \pnm{\ekd}$, and any $i\geq0$, there is a canonical isomorphism
\[ H^i_\rig(X/\ekd,E\otimes (M\otimes \mathcal{O}_{X/K}^\dagger)) \cong H^i_\rig(X/\ekd,E)\otimes_{\ekd} M \]
of $\pn$-modules.
\end{proposition}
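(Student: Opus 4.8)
The two statements are most cleanly handled by proving the projection formula first and then deducing the adjunction from it, so I reverse the order in which they are stated. Both functors, and the claimed isomorphisms, are compatible with Zariski descent, so for the projection formula I would first reduce to the case in which $X$ admits a smooth and proper frame $(X,Y,\mathfrak{P})$ over $\cur{V}\pow{t}$, exactly as in the deduction that $H^1_\rig$ computes extension groups. In that case $H^i_\rig(X/\ekd,-)$ is computed by the (relative, i.e. geometric) de Rham complex $\mathrm{DR}(-)$ of the underlying object of $\isoc{X/\ekd}$, while the $\pn$-module structure on the result is induced by the residual $t$-direction connection and the Frobenius $\varphi$ carried by an object of $\fisoc{X/K}$.

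The key observation is that the underlying object of $\isoc{X/\ekd}$ of $M\otimes \mathcal{O}_{X/K}^\dagger$ is simply $M\otimes_{\ekd}\mathcal{O}_{X/\ekd}^\dagger$, namely the finite free module $M$ tensored with the unit, carrying geometric connection $\mathrm{id}_M\otimes\nabla$: the $\partial_t$-connection on $M$ contributes only to the arithmetic part of the absolute connection and is invisible to the relative de Rham complex. Hence the complex computing $H^i_\rig(X/\ekd,E\otimes(M\otimes\mathcal{O}_{X/K}^\dagger))$ is canonically $\mathrm{DR}(E)\otimes_{\ekd}M$. As $M$ is finite free over $\ekd$ this is a finite direct sum of copies of $\mathrm{DR}(E)$, so taking cohomology commutes with $-\otimes_{\ekd}M$ and produces an isomorphism of $\ekd$-modules
\[ H^i_\rig(X/\ekd,E\otimes(M\otimes\mathcal{O}_{X/K}^\dagger))\cong H^i_\rig(X/\ekd,E)\otimes_{\ekd}M. \]

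The only delicate point is to check that this isomorphism respects the $\pn$-module structure, and this is where the refinement to genuine $\pn$-modules (rather than mere $\ekd$-modules) is used. On the left the connection and Frobenius are induced by the arithmetic connection and Frobenius of $E\otimes(M\otimes\mathcal{O}_{X/K}^\dagger)$, which by the Leibniz rule and the multiplicativity of $\varphi$ decompose as $\nabla_E\otimes 1 + 1\otimes\nabla_M$ and $\varphi_E\otimes\varphi_M$; on the right one has the tensor-product $\pn$-structure on $H^i_\rig(X/\ekd,E)\otimes_{\ekd}M$. Matching the two reduces to the naturality of the Gauss--Manin connection and of the Frobenius in the coefficient object, together with the fact that $\nabla_M$ and $\varphi_M$ pass unchanged through the geometric cohomology as the tensor factor; I expect this bookkeeping to be the main, if essentially routine, obstacle.

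Finally I would deduce the adjunction. Since $M$ is finite free it is dualizable in $\pnm{\ekd}$, and as $-\otimes\mathcal{O}_{X/K}^\dagger$ is a tensor functor, $M\otimes\mathcal{O}_{X/K}^\dagger$ is dualizable in $\fisoc{X/K}$ with dual $M^\vee\otimes\mathcal{O}_{X/K}^\dagger$, giving
\[ \mathrm{Hom}_{\fisoc{X/K}}(M\otimes\mathcal{O}_{X/K}^\dagger,E)\cong \mathrm{Hom}_{\fisoc{X/K}}(\mathcal{O}_{X/K}^\dagger,(M^\vee\otimes\mathcal{O}_{X/K}^\dagger)\otimes E). \]
Under the identification $\pnm{\ekd}\cong\fisoc{\spec{F}/K}$, morphisms out of the unit $\mathcal{O}_{X/K}^\dagger$ are computed by $\mathrm{Hom}_{\pnm{\ekd}}(\ekd,H^0_\rig(X/\ekd,-))$, since a global section defines a morphism from the unit exactly when it is horizontal for the full absolute connection and fixed by $\varphi$. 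Applying the projection formula with $i=0$ then yields
\[ \mathrm{Hom}_{\pnm{\ekd}}(\ekd,M^\vee\otimes H^0_\rig(X/\ekd,E))\cong \mathrm{Hom}_{\pnm{\ekd}}(M,H^0_\rig(X/\ekd,E)), \]
the last isomorphism again by dualizability of $M$ in $\pnm{\ekd}$. This is the asserted adjunction.
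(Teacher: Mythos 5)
Your proposal is correct; the paper itself dismisses this proposition with the single word ``Entirely straightforward,'' and your argument is exactly the routine verification being alluded to: frame-locally the underlying relative isocrystal of $M\otimes \mathcal{O}_{X/K}^\dagger$ is a finite direct sum of copies of the unit (the $\partial_t$-part of the connection on $M$ being invisible to the geometric de Rham complex), so tensoring with the finite free module $M$ commutes with cohomology over the field $\ekd$, and the residual connection and Frobenius match by the Leibniz rule and multiplicativity. Your further observation that the adjunction then follows formally from the $i=0$ case of the projection formula together with dualizability of $M$ in $\pnm{\ekd}$ and the identification $\mathrm{Hom}_{\fisoc{X/K}}(\mathcal{O}_{X/K}^\dagger,E')\cong \mathrm{Hom}_{\pnm{\ekd}}(\ekd,H^0_\rig(X/\ekd,E'))$ is a clean way to organize the bookkeeping, consistent with how the paper uses these facts later.
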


\begin{remark} In the above situation, I will generally write $E\otimes M$ instead of $E\otimes (M\otimes \mathcal{O}_{X/K}^\dagger)$, thus the `projection formula' becomes
\[ H^i_\rig(X/\ekd,E\otimes M) \cong H^i_\rig(X/\ekd,E)\otimes_{\ekd} M.\]
\end{remark}

\begin{proof} Entirely straightforward.
\end{proof}

The Leray spectral sequence I will require will show how to calculate extension groups in the category $\fisoc{X/K}$, and will be part of a more general theory of `absolute' cohomology for overconvergent $F$-isocrystals. To set up this theory properly will require promoting the cohomology groups $H^i_\rig(X/\ekd,E)\in \pnm{\ekd}$ to a complex in a suitable derived category of $\pn$-modules, and this is the topic of the next section.

\section{Absolute Frobenius cohomology}\label{safc}

The purpose of this section is to introduce `absolute' cohomology groups for objects $E\in \fisoc{X/K}$, analogous to Jannsen's continuous $\ell$-adic cohomology in the \'etale theory, or Deligne cohomology in Hodge theory. The basic point is to promote the cohomology groups $H^i_\rig(X/\ekd,E)\in \pnm{\ekd}$ to an object in a suitable derived category, and then show that the hom groups in this category compute extension groups of $F$-isocrystals. The vast majority of this section is fairly formal, and while the reason for introducing these `absolute cohomology groups' is to make the proof of Theorem \ref{bc} a bit more streamlined, I fully expect there to be many other future applications, for example in the study of $L$-functions.

For simplicity I will assume throughout that there is a smooth and proper frame $(X,Y,\mathfrak{P})$ admitting a Frobenius lift $\sigma$, the general case is handled using descent methods and simplicial frames (the proofs are identical but the notation far more cumbersome). The first thing I need to do is relax the fairly stringent conditions defining the category $\fisoc{X/K}$.

\begin{definition} Let $\widetilde{\mathrm{MIC}}((X,Y,\mathfrak{P})/K)$ denote the category of (not necessarily coherent) $j_X^\dagger\mathcal{O}_{]Y[_\mathfrak{P}}$-modules with a (not necessarily overconvergent) integrable connection. 
\end{definition}

\begin{definition} A weak Frobenius structure on an object $E$ of $\widetilde{\mathrm{MIC}}((X,Y,\mathfrak{P})/K)$ is a morphism
\[ \sigma^*E\rightarrow E \]
in $\widetilde{\mathrm{MIC}}((X,Y,\mathfrak{P})/K)$. The category of modules with integrable connection and weak Frobenius is denoted $F^w\text{-}\widetilde{\mathrm{MIC}}((X,Y,\mathfrak{P})/K)$.
\end{definition}

There is therefore a fully faithful functor $\fisoc{X/K}\rightarrow F^w\text{-}\widetilde{\mathrm{MIC}}((X,Y,\mathfrak{P})/K)$. I will also write $\widetilde{\underline{\mathbf{M}}}^{\nabla}_{\ekd}$ for the category of $\ekd$-vector spaces with connection (not necessarily finite dimensional) and $\widetilde{\underline{\mathbf{M}\Phi}}^{w,\nabla}_{\ekd}$ for the category of $\ekd$-vector spaces with connection and (compatible) weak Frobenius. I do not claim that these are equivalent to the categories $(F^w\text{-})\widetilde{\mathrm{MIC}}((X,Y,\mathfrak{P})/K)$ over the `base' frame
\[ (\spec{F},\spec{R},\spf{\cur{V}\pow{t}}),\]
and I see no reason to believe that they are. There is, however, a fully faithful functor $\pnm{\ekd}\rightarrow \widetilde{\underline{\mathbf{M}\Phi}}^{w,\nabla}_{\ekd}$.

\begin{lemma} \label{injs} The categories $F^w\text{-}\widetilde{\mathrm{MIC}}((X,Y,\mathfrak{P})/K)$ and $\widetilde{\mathrm{MIC}}((X,Y,\mathfrak{P})/K)$ have enough injectives, and the forgetful functor
\[F^w\text{-}\widetilde{\mathrm{MIC}}((X,Y,\mathfrak{P})/K)\rightarrow \widetilde{\mathrm{MIC}}((X,Y,\mathfrak{P})/K) \]
preserves them.
\end{lemma}

\begin{proof} Note that the functor $E\mapsto \bigoplus_{n\geq0} (\sigma^*)^nE$ gives an exact left adjoint to the exact forgetful functor, and hence by general nonsense it suffices to prove that $\widetilde{\mathrm{MIC}}((X,Y,\mathfrak{P})/K)$ is a Grothendieck category. But now simply note that $\widetilde{\mathrm{MIC}}((X,Y,\mathfrak{P})/K)$ is the category of sheaves of left modules over a suitable ring of differential operators on $]Y[_\mathfrak{P}$, and the claim follows.
\end{proof}

Define functors
\begin{align*} H^i_\mathrm{dR}(]Y[_\mathfrak{P},-) : F^w\text{-}\widetilde{\mathrm{MIC}}((X,Y,\mathfrak{P})/K) &\rightarrow \widetilde{\underline{\mathbf{M}\Phi}}^{w,\nabla}_{\ekd} \\
\widetilde{\mathrm{MIC}}((X,Y,\mathfrak{P})/K) &\rightarrow \widetilde{\underline{\mathbf{M}}}^{\nabla}_{\ekd} \end{align*}
exactly as for overconvergent isocrystals, these collectively form a $\delta$-functor. Alternatively, since the categories $(F^w\text{-})\mathrm{MIC}((X,Y,\mathfrak{P})/K)$ have enough injectives, and $H^0_\mathrm{dR}(]Y[_\mathfrak{P},-)$ is left exact, let
\begin{align*} \mathbf{R}^iH^0_\mathrm{dR}(]Y[_\mathfrak{P},-) : F^w\text{-}\widetilde{\mathrm{MIC}}((X,Y,\mathfrak{P})/K) &\rightarrow \widetilde{\underline{\mathbf{M}\Phi}}^{w,\nabla}_{\ekd} \\
\widetilde{\mathrm{MIC}}((X,Y,\mathfrak{P})/K) &\rightarrow \widetilde{\underline{\mathbf{M}}}^{\nabla}_{\ekd}. \end{align*}
denote the corresponding derived functors.

\begin{proposition} There is a natural isomorphism $\mathbf{R}^iH^0_\mathrm{dR}(]Y[_\mathfrak{P},-)\cong H^i_\mathrm{dR}(]Y[_\mathfrak{P},-)$ for all $i$.
\end{proposition}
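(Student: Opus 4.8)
The plan is to deduce the isomorphism from the universal property of derived functors together with Grothendieck's effaceability criterion. Since $\mathbf{R}^iH^0_\dR(]Y[_\mathfrak{P},-)$ is by construction the universal cohomological $\delta$-functor extending the left exact functor $H^0_\dR(]Y[_\mathfrak{P},-)$, and since the explicit functors $H^i_\dR(]Y[_\mathfrak{P},-)$ also form a $\delta$-functor agreeing with $\mathbf{R}^0H^0_\dR$ in degree zero, there is a canonical morphism of $\delta$-functors $\mathbf{R}^iH^0_\dR\to H^i_\dR$. To see that it is an isomorphism it suffices to show that $(H^i_\dR)_i$ is itself universal, and by Grothendieck's theorem this follows once I know that each $H^i_\dR(]Y[_\mathfrak{P},-)$ with $i>0$ is effaceable; concretely, it is enough to prove that injective objects of $\widetilde{\mathrm{MIC}}((X,Y,\mathfrak{P})/K)$ (resp.\ of $F^w\text{-}\widetilde{\mathrm{MIC}}((X,Y,\mathfrak{P})/K)$) are acyclic for de Rham cohomology. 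The Frobenius case would reduce immediately to the plain one: by Lemma \ref{injs} the forgetful functor preserves injectives, and $H^i_\dR$ is computed after forgetting the weak Frobenius structure, so I may work in $\widetilde{\mathrm{MIC}}((X,Y,\mathfrak{P})/K)$ throughout.

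The heart of the argument is therefore the key acyclicity statement: if $I$ is injective in $\widetilde{\mathrm{MIC}}((X,Y,\mathfrak{P})/K)$ then $H^i_\dR(]Y[_\mathfrak{P},I)=0$ for $i>0$. Here I would exploit the description (already used in the proof of Lemma \ref{injs}) of $\widetilde{\mathrm{MIC}}((X,Y,\mathfrak{P})/K)$ as the category of sheaves of left modules over the ring $\cur{D}$ of differential operators on $]Y[_\mathfrak{P}$, which is locally free over $\cur{O}_{]Y[_\mathfrak{P}}$. Recall that $H^i_\dR(]Y[_\mathfrak{P},E)=\mathbb{H}^i(]Y[_\mathfrak{P},\mathrm{DR}(E))$ is the hypercohomology of the de Rham complex $\mathrm{DR}(E)=[E\to\Omega^1\otimes E\to\cdots]$. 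Using the Spencer resolution $\cur{D}\otimes_{\cur{O}}\wedge^\bullet\Theta\to\cur{O}$ of the unit $\cur{D}$-module by locally free left $\cur{D}$-modules, one identifies $\mathrm{DR}(E)$ with $\cur{H}om_{\cur{D}}(\cur{D}\otimes_{\cur{O}}\wedge^\bullet\Theta,E)$. When $E=I$ is injective the functor $\cur{H}om_{\cur{D}}(-,I)$ is exact (on every open $U$ it is $\mathrm{Hom}_{\cur{D}|_U}(-,I|_U)$, and $I|_U$ is again injective because extension by zero $j_{U!}$ is an exact left adjoint to restriction), so applying it to the Spencer resolution shows that $\mathrm{DR}(I)$ is a resolution, as a complex of sheaves, of the horizontal sections sheaf $\cur{H}om_{\cur{D}}(\cur{O},I)=\ker\nabla$.

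It then remains to control the sheaf cohomology of this resolution, and the clean point is that $\ker\nabla$ is flasque. Indeed, by the same $j_{U!}$-adjunction one has $\Gamma(U,\ker\nabla)=\mathrm{Hom}_{\cur{D}}(j_{U!}\cur{O}_U,I)$, and for $U\subset V$ the restriction map is induced by the monomorphism $j_{U!}\cur{O}_U\hookrightarrow j_{V!}\cur{O}_V$; since $I$ is injective, $\mathrm{Hom}_{\cur{D}}(-,I)$ carries this monomorphism to a surjection, which is precisely flasqueness of $\ker\nabla$. (Alternatively, $\Omega^p$ is locally free of finite rank, so each term $\Omega^p\otimes I$ is again an injective, hence flasque, $\cur{O}$-module, giving the first hypercohomology spectral sequence route.) Feeding flasqueness into the second hypercohomology spectral sequence
\[ E_2^{p,q}=H^p(]Y[_\mathfrak{P},\cur{H}^q(\mathrm{DR}(I)))\Rightarrow H^{p+q}_\dR(]Y[_\mathfrak{P},I), \]
the nonzero terms concentrate at $p=q=0$, since $\cur{H}^q(\mathrm{DR}(I))=0$ for $q>0$ and $H^p(]Y[_\mathfrak{P},\ker\nabla)=0$ for $p>0$, whence $H^i_\dR(]Y[_\mathfrak{P},I)=0$ for $i>0$, as required.

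The formal part — universality, effaceability, and the reduction of the Frobenius case — is routine. The real content, and the step I expect to need the most care, is the key acyclicity lemma: specifically verifying that the Spencer resolution genuinely identifies $\mathrm{DR}$ applied to $I$ with $R\cur{H}om_{\cur{D}}(\cur{O},-)$ of $I$, and that the horizontal sections sheaf of an injective $\cur{D}$-module is flasque. Once these acyclicities are in hand the spectral sequence collapses and the canonical comparison map is forced to be an isomorphism.
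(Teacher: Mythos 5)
Your overall skeleton --- the canonical morphism of $\delta$-functors from universality, the reduction of the Frobenius case via Lemma \ref{injs}, and effaceability via acyclicity of injectives using the Spencer resolution --- is exactly the paper's, and your flasqueness argument for the horizontal sections sheaf of an injective is a correct unpacking of why injectives are acyclic for $\mathbf{R}\mathrm{Hom}_{\cur{D}}(\cur{O},-)$. However, there is a genuine gap at the key step, coming from a confusion of base rings. The functor $H^i_\dR(]Y[_\mathfrak{P},-)$ of the proposition takes values in $\widetilde{\underline{\mathbf{M}}}^{\nabla}_{\ekd}$: it is de Rham cohomology \emph{relative to $\ekd$}, computed from the complex $E\to \Omega^1_{]Y[_\mathfrak{P}/S_K}\otimes E\to\cdots$, with the Gauss--Manin connection in the $t$-direction surviving on cohomology. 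Your $\cur{D}$ is, by your own reference to the proof of Lemma \ref{injs}, the ring $j_X^\dagger\cur{D}_{]Y[_\mathfrak{P}/K}$ of differential operators relative to $K$ (its left modules are precisely $\widetilde{\mathrm{MIC}}((X,Y,\mathfrak{P})/K)$), and the Spencer resolution over this ring identifies $\cur{H}om_{\cur{D}}(\cur{D}\otimes_{\cur{O}}\wedge^\bullet\Theta,E)$ with the \emph{absolute} de Rham complex over $K$. So your acyclicity lemma, as written, proves vanishing for the wrong functor: absolute de Rham cohomology over $K$ genuinely differs from the relative version (already in degree zero: the relative $H^0$ of the unit object is $\ekd$ with its $t$-connection, while the absolute $H^0$ is $K$).

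What is missing is the paper's intermediate reduction. One must introduce the category $\widetilde{\mathrm{MIC}}((X,Y,\mathfrak{P})/\ekd)$ of connections relative to $\ekd$, equivalently left modules over $j_X^\dagger\cur{D}_{]Y[_\mathfrak{P}/S_K}$, and show that the forgetful functor from $\widetilde{\mathrm{MIC}}((X,Y,\mathfrak{P})/K)$ preserves injectives; the paper does this by exhibiting the exact left adjoint $j_X^\dagger\cur{D}_{]Y[_\mathfrak{P}/K}\otimes_{j_X^\dagger\cur{D}_{]Y[_\mathfrak{P}/S_K}}(-)$, with exactness checked locally. Once an injective $I$ of the $/K$ category is known to remain injective over $j_X^\dagger\cur{D}_{]Y[_\mathfrak{P}/S_K}$, your Spencer-plus-flasqueness computation goes through verbatim, provided you run it with $\cur{D}=j_X^\dagger\cur{D}_{]Y[_\mathfrak{P}/S_K}$ and the \emph{relative} tangent and cotangent sheaves, so that it shows $H^i_\dR(]Y[_\mathfrak{P},-)$ computes $\mathbf{R}^i\mathrm{Hom}_{j_X^\dagger\cur{D}_{]Y[_\mathfrak{P}/S_K}}\bigl(j_X^\dagger\cur{O}_{]Y[_\mathfrak{P}},-\bigr)$. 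With that insertion your argument coincides with the paper's; without it, the central acyclicity claim is proved for a complex that is not the one defining $H^i_\dR(]Y[_\mathfrak{P},-)$.
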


\begin{proof} Since $\mathbf{R}^iH^0_\mathrm{dR}(]Y[_\mathfrak{P},-)$ is a universal $\delta$-functor, it follows that there exists a canonical morphism $\mathbf{R}^iH^0_\mathrm{dR}(]Y[_\mathfrak{P},-)\rightarrow H^i_\mathrm{dR}(]Y[_\mathfrak{P},-)$ of $\delta$-functors. By definition, $H^i_\mathrm{dR}(]Y[_\mathfrak{P},-)$ commutes with forgetting Frobenius structures, Lemma \ref{injs} shows that $\mathbf{R}^iH^0_\mathrm{dR}(]Y[_\mathfrak{P},-)$ does as well, it therefore suffices to treat the case without Frobenius structures, and indeed to show that  $H^i_\mathrm{dR}(]Y[_\mathfrak{P},-)$ is effaceable for $i\geq1$.

Consider the category $\widetilde{\mathrm{MIC}}((X,Y,\mathfrak{P})/\ekd)$ of $j_X^\dagger\cur{O}_{]Y[_\mathfrak{P}}$-modules with integrable connection relative to $\ekd$, I first claim that the forgetful functor
\[ \widetilde{\mathrm{MIC}}((X,Y,\mathfrak{P})/K) \rightarrow \widetilde{\mathrm{MIC}}((X,Y,\mathfrak{P})/\ekd) \]
admits an exact left adjoint, which can be seen as follows. Let $\cur{D}_{]Y[_\mathfrak{P}/K}$ and $\cur{D}_{]Y[_\mathfrak{P}/S_K}$ denote the rings of differential operators on$]Y[_\mathfrak{P}$ relative to $K$ and $\ekd$ respectively, so that the categories $\widetilde{\mathrm{MIC}}((X,Y,\mathfrak{P})/K)$ and $\widetilde{\mathrm{MIC}}((X,Y,\mathfrak{P})/\ekd)$ are equivalent to the categories of left modules over $j_X^\dagger \cur{D}_{]Y[_\mathfrak{P}/K}$ and  $j_X^\dagger \cur{D}_{]Y[_\mathfrak{P}/S_K}$ respectively. There is a canonical ring homomorphism $j_X^\dagger \cur{D}_{]Y[_\mathfrak{P}/S_K} \rightarrow  j_X^\dagger \cur{D}_{]Y[_\mathfrak{P}/K}$ and the forgetful functor is simply restriction along this ring homomorphism. Hence the tensor product $j_X^\dagger \cur{D}_{]Y[_\mathfrak{P}/K} \otimes_{j_X^\dagger \cur{D}_{]Y[_\mathfrak{P}/S_K}}-$ (using the fact that $j_X^\dagger \cur{D}_{]Y[_\mathfrak{P}/K}$ may be considered as a right $j_X^\dagger \cur{D}_{]Y[_\mathfrak{P}/S_K}$-module) is the required left adjoint, using a local description shows that it is exact.

Hence the forgetful functor preserves injectives, and it therefore suffices to show that injective objects in $\widetilde{\mathrm{MIC}}((X,Y,\mathfrak{P})/\ekd)$ are acyclic for the higher de Rham cohomology functors $H^i_\mathrm{dR}(]Y[_\mathfrak{P},-)$, taking values in the category of $\ekd$-vector spaces. But this simply follows from the existence of the Spencer resolution (see for example \cite[Lemma 1.5.27]{HTT08}), which shows that $H^i_\mathrm{dR}(]Y[_\mathfrak{P},-)$ simply computes $\mathbf{R}^i\mathrm{Hom}(j_X^\dagger\cur{O}_{]Y[_\mathfrak{P}},-)$ inside the category of left $j_X^\dagger\cur{D}_{]Y[_\mathfrak{P}/S_K}$-modules.
\end{proof}

The total derived functor
\[ \mathbf{R}\Gamma_\mathrm{dR}(]Y[_\mathfrak{P},-):D^+(F^w\text{-}\widetilde{\mathrm{MIC}}^\dagger((X,Y,\mathfrak{P})/K)) \rightarrow D^+(\widetilde{\underline{\mathbf{M}\Phi}}^{w,\nabla}_{\ekd})\]
applied to $E\in \fisoc{X/K}$ therefore provides a `lift' to a suitable derived category of the cohomology groups $H^i_\rig(X/\ekd,E)$. Let 
\[ D^b(\pnm{\ekd}) \subset D^+(\widetilde{\underline{\mathbf{M}\Phi}}^{w,\nabla}_{\ekd})\]
denote the full subcategory consisting of objects whose cohomology groups are bounded, finite dimensional, and with bijective Frobenius structures. Note that I am not claiming that this is literally the bounded derived category of $\pnm{\ekd}$, but simply using this as convenient notation.

\begin{lemma} For $E\in\fisoc{X/K}$ the complex 
\[ \mathbf{R}\Gamma_\rig(X/\ekd,E):= \mathbf{R}\Gamma_\mathrm{dR}(]Y[_\mathfrak{P},E)\in D^b(\pnm{\ekd})\]
is independent of the choice of frame $(X,Y,\mathfrak{P})$.
\end{lemma}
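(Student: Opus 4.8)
The plan is to interpret frame-independence as the assertion that for any two frames there is a canonical isomorphism in $D^b(\pnm{\ekd})$, and that these isomorphisms satisfy a cocycle condition, so that $\mathbf{R}\Gamma_\rig(X/\ekd,E)$ is well-defined up to canonical isomorphism. Since we are working under the standing assumption that a smooth and proper frame with Frobenius lift exists, it suffices to compare two such frames $(X,Y_1,\mathfrak{P}_1)$ and $(X,Y_2,\mathfrak{P}_2)$. I would do this by dominating both with the product frame $(X,Y_{12},\mathfrak{P}_1\times\mathfrak{P}_2)$, where $X$ is embedded diagonally, $Y_{12}$ is its closure, and the product Frobenius provides the required lift. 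The two projections $p_i\colon\mathfrak{P}_1\times\mathfrak{P}_2\to\mathfrak{P}_i$ restrict to the identity on $X$ and so are morphisms of frames.

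First I would set up the functoriality of $\mathbf{R}\Gamma_\dR$. A morphism of frames which is the identity on $X$ induces a pullback functor $u^*$ between the categories $F^w\text{-}\widetilde{\mathrm{MIC}}$, and since $u^*$ is compatible with the connection and the weak Frobenius structure and carries the unit to the unit, it yields a base-change morphism $\mathbf{R}\Gamma_\dR(]Y[_{\mathfrak{P}},E)\to\mathbf{R}\Gamma_\dR(]Y'[_{\mathfrak{P}'},u^*E)$ in $D^+(\widetilde{\underline{\mathbf{M}\Phi}}^{w,\nabla}_{\ekd})$. Applying this to $p_1$ and $p_2$ produces a diagram
\[ \mathbf{R}\Gamma_\dR(]Y_1[_{\mathfrak{P}_1},E)\longrightarrow \mathbf{R}\Gamma_\dR(]Y_{12}[_{\mathfrak{P}_1\times\mathfrak{P}_2},E)\longleftarrow\mathbf{R}\Gamma_\dR(]Y_2[_{\mathfrak{P}_2},E) \]
of maps in $D^+(\widetilde{\underline{\mathbf{M}\Phi}}^{w,\nabla}_{\ekd})$, each respecting the Frobenius and connection because $u^*$ does.

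Next I would argue that both arrows are isomorphisms. On the $i$-th cohomology object they recover, via the preceding identification $\mathbf{R}^iH^0_\dR\cong H^i_\dR$ and the computation of the latter as $H^i_\rig(X/\ekd,E)$, the comparison map between the rigid cohomology groups computed from the two frames; this is the canonical isomorphism of $\pn$-modules furnished by the frame-independence of $H^i_\rig$ established in \cite{LP16}. Hence each arrow is a quasi-isomorphism, and a quasi-isomorphism in $D^+$ of an abelian category is an isomorphism. Composing one arrow with the inverse of the other gives the desired canonical isomorphism
\[ \mathbf{R}\Gamma_\dR(]Y_1[_{\mathfrak{P}_1},E)\isomto \mathbf{R}\Gamma_\dR(]Y_2[_{\mathfrak{P}_2},E) \]
in $D^+(\widetilde{\underline{\mathbf{M}\Phi}}^{w,\nabla}_{\ekd})$. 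Because its cohomology objects are the finite-dimensional $\pn$-modules $H^i_\rig(X/\ekd,E)$ with bijective Frobenius, vanishing outside a bounded range, the isomorphism in fact takes place inside $D^b(\pnm{\ekd})$.

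Finally I would verify the cocycle condition for three frames, using the triple product $\mathfrak{P}_1\times\mathfrak{P}_2\times\mathfrak{P}_3$ and the compatibility of the base-change morphisms with composition of frame morphisms, so that the pairwise comparison isomorphisms glue into a genuinely canonical object rather than an unstructured collection of identifications. The main obstacle I anticipate is not the quasi-isomorphism property itself — which is essentially free once the maps exist, reducing to the already-known frame-independence of the groups $H^i_\rig$ — but rather the careful construction of the functorial base-change morphisms between the derived functors of the \emph{frame-dependent} categories $F^w\text{-}\widetilde{\mathrm{MIC}}$, keeping track of injective resolutions across different frames, together with the verification that these morphisms are coherent enough (associative and compatible with the Frobenius and connection) to yield the cocycle condition. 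Making this bookkeeping precise is where the real work lies, even though each individual step is formal.
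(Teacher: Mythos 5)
Your proposal is correct and follows essentially the same route as the paper's proof: dominate the two frames by a third (the paper takes an arbitrary dominating map, you take the product frame), construct the comparison morphism functorially, and verify it is a quasi-isomorphism by passing to cohomology, forgetting the connection and Frobenius structures, and invoking the frame-independence of $\ekd$-valued rigid cohomology from \cite{LP16}. The only difference is that you spell out the cocycle condition and the bookkeeping of base-change morphisms, which the paper dismisses as ``straightforward''.
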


\begin{proof} This is straightforward, any two choices of frame are dominated by a third, hence I may assume that there is a map $(X',Y',\mathfrak{P}')\rightarrow (X,Y,\mathfrak{P})$. In this case there is a morphism
\[ \mathbf{R}\Gamma_\mathrm{dR}(]Y[_\mathfrak{P},E)\rightarrow \mathbf{R}\Gamma_\mathrm{dR}(]Y'[_{\mathfrak{P}'},E')\]
and to prove it is a quasi-isomorphism it suffices first to take cohomology, and then forget all the extra structures. In other words the claim reduces to independence of $\ekd$-valued rigid cohomology of the frame.
\end{proof}

 Of course, it can be shown similarly that $\mathbf{R}\Gamma_\rig(X/\ekd,E)$ is functorial in $X$. Absolute Frobenius cohomology is then simply defined to be $\mathbf{R}\mathrm{Hom}$ inside the category $D^+(\widetilde{\underline{\mathbf{M}\Phi}}^{w,\nabla}_{\ekd})$.
 
 \begin{definition} Let $X$ be a $F$-variety and $E\in \fisoc{X/K}$. I define the absolute Frobenius cohomology groups
 \[ H^i_F(X/K,E):=H^i(\mathbf{R}\mathrm{Hom}_{D^+(\widetilde{\underline{\mathbf{M}\Phi}}^{w,\nabla}_{\ekd})}(\ekd,\mathbf{R}\Gamma_\rig(X/\ekd,E))),\]
 these are vector spaces over $K^{\sigma}$. I also define the absolute Frobenius complex
 \[ \mathbf{R}\Gamma_F(X/K,E):= \mathbf{R}\mathrm{Hom}_{D^+(\widetilde{\underline{\mathbf{M}\Phi}}^{w,\nabla}_{\ekd})}(\ekd,\mathbf{R}\Gamma_\rig(X/\ekd,E)) \in D^b(K^{\sigma=\mathrm{id}})\]
 in the bounded derived category of $K^{\sigma}$-vector spaces (we will see that these complexes are indeed bounded shortly). These are independent of the choice of frame and functorial in $X$.
 \end{definition}

When $X=\spec{F}$ and $E$ corresponds to some $\pn$-module $M$, I will generally write $H^i_F(M)$ instead of $H^i_F(X/K,E)$. These cohomology groups can be calculated as follows. For any $M\in \widetilde{\underline{\mathbf{M}\Phi}}^{w,\nabla}_{\ekd}$, let $\varphi:M\rightarrow M$ denote the $\sigma$-linear map defining the weak Frobenius structure on $M$, $\nabla:M\rightarrow M$ the connection and $\tilde{\varphi}:M\rightarrow M$ the map defined by $m\mapsto \partial_t(\sigma(t))\varphi(m)$. There is therefore a commutative diagram
\[ \xymatrix{ M \ar[r]^\nabla \ar[d]_\varphi &  M \ar[d]^{\tilde\varphi} \\ M\ar[r]^\nabla & M }\]
and an associated complex
\[ \mathcal{C}_M: 0 \rightarrow M \overset{(\nabla,\varphi-1)}{\longrightarrow} M\oplus M \overset{(1-\tilde\varphi,\nabla)}{\longrightarrow} M\rightarrow 0. \]

\begin{proposition} For any $M\in \widetilde{\underline{\mathbf{M}\Phi}}^{w,\nabla}_{\ekd}$ there is a quasi-isomorphism $\mathcal{C}_M \cong \mathbf{R}\mathrm{Hom}(\ekd,M)$ inside $D^+(K^{\sigma})$. 
\end{proposition}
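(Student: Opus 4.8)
The goal is to identify $\mathbf{R}\mathrm{Hom}_{D^+(\widetilde{\underline{\mathbf{M}\Phi}}^{w,\nabla}_{\ekd})}(\ekd, M)$ with the explicit three-term complex $\mathcal{C}_M$. My plan is to compute $\mathrm{Hom}(\ekd,-)$ and its derived functors directly, by recognising $\mathrm{Hom}(\ekd, M)$ as a fixed-point functor and then identifying the higher derived functors via a small explicit injective (or acyclic) resolution manufactured from the structure of the category. The key observation is that a morphism $\ekd \to M$ in $\widetilde{\underline{\mathbf{M}\Phi}}^{w,\nabla}_{\ekd}$ is determined by the image $m$ of $1$, and this image must be horizontal ($\nabla m = 0$) and Frobenius-invariant ($\varphi(m) = m$); so $H^0\mathcal{C}_M = \ker(\nabla,\varphi-1) = \mathrm{Hom}(\ekd,M)$ already matches the degree-zero term of $\mathcal{C}_M$. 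The factor $\partial_t(\sigma(t))$ appearing in $\tilde\varphi$ is exactly the twist needed so that $\nabla$ commutes with the Frobenius in the square diagram (this is the compatibility $\sigma^*\nabla = \nabla \circ \varphi$ written out in coordinates), which is what makes $\mathcal{C}_M$ a genuine total complex of the displayed commuting square.

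The cleanest route is to realise $\mathcal{C}_M$ as the totalisation of a double complex built from two separate ``Koszul-type'' resolutions. First I would treat the connection $\nabla$ and the Frobenius $\varphi$ as two commuting operators and write $\mathcal{C}_M$ as the total complex of
\[
\bigl[\, M \xrightarrow{\nabla} M \,\bigr] \quad\text{in one direction,}\qquad \bigl[\, M \xrightarrow{\varphi-1} M \,\bigr] \quad\text{in the other.}
\]
The plan is then to show each of these two-term complexes computes a derived functor: the complex $[M \xrightarrow{\nabla} M]$ computes $\mathbf{R}\mathrm{Hom}$ from $\ekd$ \emph{in the category with connection only} (i.e. $\widetilde{\underline{\mathbf{M}}}^{\nabla}_{\ekd}$), and then adjoining the Frobenius contributes the second direction. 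Concretely, I would first prove that for $N \in \widetilde{\underline{\mathbf{M}}}^{\nabla}_{\ekd}$ one has $\mathbf{R}\mathrm{Hom}(\ekd, N) \cong [N \xrightarrow{\nabla} N]$; this is a one-variable de Rham statement, and since $\widetilde{\underline{\mathbf{M}}}^{\nabla}_{\ekd}$ is the category of modules over the ring $\ekd\langle \partial_t\rangle$, the two-term complex is just the Spencer/Koszul resolution of $\ekd$ as a $\ekd\langle\partial_t\rangle$-module, exactly as invoked in the proof of the preceding proposition.

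To incorporate Frobenius I would use the adjunction already recorded in Lemma \ref{injs}: the functor $N \mapsto \bigoplus_{n\geq 0}(\sigma^*)^n N$ is an exact left adjoint to the forgetful functor $\widetilde{\underline{\mathbf{M}\Phi}}^{w,\nabla}_{\ekd} \to \widetilde{\underline{\mathbf{M}}}^{\nabla}_{\ekd}$, so the forgetful functor preserves injectives and computing $\mathbf{R}\mathrm{Hom}(\ekd,-)$ may be done after choosing an injective resolution that is \emph{simultaneously} injective for the connection-only category. Taking such a resolution $M \to I^\bullet$, the complex $\mathrm{Hom}_{\Phi,\nabla}(\ekd, I^\bullet)$ computes the total $\mathbf{R}\mathrm{Hom}$, and a morphism out of $\ekd$ in the full category is a horizontal Frobenius-fixed vector, i.e. the simultaneous kernel of $\nabla$ and $\varphi-1$; unwinding the resulting double complex on $I^\bullet$ and using the connection-only computation fibrewise gives precisely $\mathcal{C}_M$.

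The main obstacle I anticipate is bookkeeping the Frobenius-twist carefully: I must check that the map induced by $\varphi$ on the degree-one de Rham term $[N \xrightarrow{\nabla} N]$ is $\tilde\varphi = \partial_t(\sigma(t))\varphi$ rather than $\varphi$, which is forced by the chain rule $\sigma^*(dt) = \partial_t(\sigma(t))\,dt$ and is the reason the two arrows in the second row of the square are labelled $\tilde\varphi$. Concretely I would verify that $\nabla \circ \tilde\varphi = \varphi \circ \nabla$ follows from the horizontality of $\varphi$, so that the square genuinely commutes and its totalisation is well-defined; this is the one place where a short explicit computation with $\sigma(t) = u t^q$ is unavoidable. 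Once the square is confirmed to commute with the correct twist, identifying the total complex of the two Koszul directions with $\mathcal{C}_M$ is purely formal, and boundedness of $\mathcal{C}_M$ (hence the claimed membership in $D^b(K^\sigma)$) is immediate since it is concentrated in degrees $0,1,2$.
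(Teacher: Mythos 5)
Your overall strategy---factoring $\mathrm{Hom}(\ekd,-)$ through the connection-only category via the injectivity adjunction of Lemma \ref{injs} and then totalising a double complex---is genuinely different from the paper's proof, which works entirely on the projective side: it realises $\widetilde{\underline{\mathbf{M}\Phi}}^{w,\nabla}_{\ekd}$ as left modules over the single non-commutative ring $R=\ekd[\partial,\varphi]$ (with the twisted relation $\partial\varphi=\partial_t(\sigma(t))\varphi\partial$) and writes down in one stroke the two-term free resolution $0\rightarrow R\rightarrow R\oplus R\rightarrow R\rightarrow\ekd\rightarrow 0$, so that $\mathcal{C}_M=\mathrm{Hom}_R(\mathcal{C}^*_{\ekd},M)$ on the nose. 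Your connection-only step is fine: over $D=\ekd[\partial]$ the resolution $0\rightarrow D\xrightarrow{\cdot\partial}D\rightarrow\ekd\rightarrow 0$ gives $\mathbf{R}\mathrm{Hom}_D(\ekd,N)\simeq[N\xrightarrow{\nabla}N]$, and your explanation of the twist ($\varphi$ acts on the degree-one term $N\,dt$ through $\tilde\varphi$ because $\sigma^*(dt)=\partial_t(\sigma(t))\,dt$) is the right one. Note, though, that you state the compatibility backwards: horizontality of $\varphi$ gives $\nabla\circ\varphi=\tilde\varphi\circ\nabla$, not $\nabla\circ\tilde\varphi=\varphi\circ\nabla$; with your version the square would not commute (compute $\nabla(\partial_t(\sigma(t))\varphi(m))$ and an extra derivative of $\partial_t(\sigma(t))$ appears).

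The genuine gap is in your last step, which you call ``purely formal'': it is not. Writing $\mathrm{Hom}(\ekd,-)=(-)^{\varphi=\mathrm{id}}\circ H^0_{\dR}$, to conclude that $\mathrm{Hom}(\ekd,I^\bullet)$ unwinds to $\mathcal{C}_M$ (equivalently, to compose the two derived functors, or to run the columns-first spectral sequence of the double complex $\mathcal{C}_{I^\bullet}$) you need each column $\mathcal{C}_{I^n}$ to be acyclic in positive degrees. Injectivity of $I^n$ in the connection-only category does kill $H^1_{\dR}(I^n)=\mathrm{Ext}^1_D(\ekd,I^n)$, but you still need $\varphi-1$ to be \emph{surjective} on $H^0_{\dR}(I^n)=\ker(\nabla|I^n)$, and this follows from nothing you have cited: it is precisely where the interaction between $\partial$ and $\varphi$ enters. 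It can be proved: since $\partial(\varphi-1)=(\partial_t(\sigma(t))\varphi-1)\partial\in R\partial$, right multiplication by $\varphi-1$ descends to an endomorphism $\rho$ of $R/R\partial\cong\bigoplus_{i\geq0}\ekd\varphi^i$, which is injective by comparing top $\varphi$-degrees; as $\ker(\nabla|I)=\mathrm{Hom}_R(R/R\partial,I)$ with $\varphi-1$ acting as $\mathrm{Hom}_R(\rho,I)$, exactness of $\mathrm{Hom}_R(-,I)$ for injective $I$ gives the surjectivity. With this supplied (and the sign fixed) your argument closes; but observe that this verification uses exactly the PBW-type monomial basis of $R$ that makes the paper's one-line resolution exact, so the detour through injectives buys conceptual clarity---it matches the composed-functor picture underlying Proposition \ref{leray}---rather than less work.
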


\begin{proof} Consider the non-commutative ring $R:=\ekd[\partial,\varphi]$ subject to the commutation laws 
\begin{align*} \partial\lambda &=\lambda\partial+\partial_t(\lambda)\\
\varphi\lambda&=\sigma(\lambda)\varphi \\
\partial\varphi &= \partial_t(\sigma(t))\varphi\partial
\end{align*}
so that $\widetilde{\underline{\mathbf{M}\Phi}}^{w,\nabla}_{\ekd}$ is equivalent to the category of left $R$-modules. Consider the complex
\[ \mathcal{C}^*_{\ekd} :0 \rightarrow R\overset{f}{\rightarrow} R\oplus R\overset{g}{\rightarrow} R \rightarrow 0 \]
where $f(r)=(r(\tilde\varphi-1),r\partial)$ and $g(r,s)=r\partial-s(1-\varphi)$. This is easily check to be a projective resolution of $\ekd$ as an $R$-module. The complex $\mathcal{C}_M$ is then simply given by $\mathrm{Hom}(\mathcal{C}^*_{\ekd},M)$.
\end{proof}

For $M\in \pnm{\ekd}$ write 
\begin{align*} H^0_\mathrm{dR}(M)&=\ker(\nabla_M) \\
H^1_\mathrm{dR}(M)&=\mathrm{coker}(\nabla_M), 
\end{align*}
and for a (not necessarily finite dimensional) $K$-vector space $V$ with a (not necessarily bijective) Frobenius $\varphi$ write
\begin{align*} V^{\varphi=\mathrm{id}}&=\ker(\varphi-\mathrm{id}|V) \\
 V_{\varphi=\mathrm{id}}&=\mathrm{coker}(\varphi-\mathrm{id}|V).
\end{align*}
Then:
\begin{itemize} \item $H^0_F(M) \cong H^0_\mathrm{dR}(M)^{\varphi=\mathrm{id}}$;
\item there is an exact sequence
\[ 0\rightarrow H^0_\mathrm{dR}(M)_{\varphi=\mathrm{id}} \rightarrow H^1_F(M)\rightarrow H^1_\mathrm{dR}(M)^{\varphi=\mathrm{id}}\rightarrow 0;\]
\item $H^2_F(M)\cong H^1_\mathrm{dR}(M)_{\varphi=\mathrm{id}}$;
\item $H^i_F(M)=0$ for $i>2$.
\end{itemize}
It follows from Proposition \ref{exts} together with some general nonsense about derived functors that there exist canonical isomorphisms
\[ H^1_F(X/K,E_1^\vee\otimes E_2)\cong \mathrm{Ext}^1_{F^w\text{-}\widetilde{\mathrm{MIC}}((X,Y,\mathfrak{P})/K)}(E_1,E_2) \cong \mathrm{Ext}^1_{\fisoc{X/K}}(E_1,E_2) \]
for all $E_1,E_2\in \fisoc{X/K}$. Finally, the Leray spectral sequence required is the following.

\begin{proposition}\label{leray} Let $E\in \fisoc{X/K}$. Then there exists a spectral sequence
\[ E_2^{p,q}=H^p_F(H^q_\rig(X/\ekd,E))\Rightarrow H^{p+q}_F(X/K,E).\]
In particular there exists an exact sequence 
\[ 0\rightarrow H^1_F(H^0_\rig(X/\ekd,E)) \rightarrow H^1_F(X/K,E)\rightarrow H^0_F(H^1_\rig(X/\ekd,E))\rightarrow  H^2_F(H^0_\rig(X/\ekd,E)) \rightarrow H^2_F(X/K,E) \]
of low degree terms.
\end{proposition}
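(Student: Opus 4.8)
The plan is to identify the asserted spectral sequence with the standard spectral sequence of hyper-derived functors, for the left exact functor $T:=\mathrm{Hom}_{\widetilde{\underline{\mathbf{M}\Phi}}^{w,\nabla}_{\ekd}}(\ekd,-)$ applied to the complex $C^\bullet:=\mathbf{R}\Gamma_\rig(X/\ekd,E)$. Unwinding the definitions one has $\mathbf{R}\Gamma_F(X/K,E)=\mathbf{R}T(C^\bullet)$, hence $H^n_F(X/K,E)=H^n(\mathbf{R}T(C^\bullet))$; and for a single $\pn$-module $M$ placed in degree $0$ one has $H^p_F(M)=H^p(\mathbf{R}T(M))=(\mathbf{R}^pT)(M)$, directly from the definition of $H^p_F$ as $\mathbf{R}\mathrm{Hom}_{D^+(\widetilde{\underline{\mathbf{M}\Phi}}^{w,\nabla}_{\ekd})}(\ekd,-)$. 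Moreover the cohomology objects of $C^\bullet$, formed in the ambient category $\widetilde{\underline{\mathbf{M}\Phi}}^{w,\nabla}_{\ekd}$, are precisely the $\pn$-modules $H^q_\rig(X/\ekd,E)$: by the definition of $D^b(\pnm{\ekd})$ they are finite dimensional with bijective Frobenius, and since $\ekd$ is a field finite dimensionality forces freeness.

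First I would check that the abstract machinery applies. The category $\widetilde{\underline{\mathbf{M}\Phi}}^{w,\nabla}_{\ekd}$ is equivalent to the category of left modules over the ring $R=\ekd[\partial,\varphi]$ considered above, hence is a Grothendieck category and in particular has enough injectives; and $C^\bullet$ is bounded and concentrated in non-negative degrees. One may therefore choose a Cartan--Eilenberg resolution $C^\bullet\to I^{\bullet,\bullet}$ by injectives, apply $T$ termwise, and form the total complex $\mathrm{Tot}(T(I^{\bullet,\bullet}))$, which represents $\mathbf{R}T(C^\bullet)$.

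By the standard formalism of hyper-derived functors, one of the two filtrations of this double complex produces a spectral sequence with second page
\[ E_2^{p,q}=(\mathbf{R}^p T)(H^q(C^\bullet)) \]
converging to $H^{p+q}(\mathbf{R}T(C^\bullet))$. Substituting the identifications of the first paragraph turns this into precisely
\[ E_2^{p,q}=H^p_F(H^q_\rig(X/\ekd,E))\Rightarrow H^{p+q}_F(X/K,E). \]
Since $H^p_F(M)=0$ for $p>2$ (established above) and rigid cohomology vanishes in negative degrees, this is a bounded first-quadrant spectral sequence, so its five-term exact sequence of low degree terms $0\to E_2^{1,0}\to H^1\to E_2^{0,1}\xrightarrow{d_2}E_2^{2,0}\to H^2$ exists and specialises exactly to the displayed exact sequence.

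I do not expect a serious obstacle, since the statement is formal once the framework is in place. The only delicate points are the two bookkeeping identifications already isolated in the first paragraph: that the cohomology of $C^\bullet$ taken in the large category $\widetilde{\underline{\mathbf{M}\Phi}}^{w,\nabla}_{\ekd}$ recovers the $\pn$-modules $H^q_\rig(X/\ekd,E)$ (which uses that $\ekd$ is a field, so finite dimensionality gives freeness, together with the definition of $D^b(\pnm{\ekd})$), and that the abstractly defined derived functors $\mathbf{R}^pT$ evaluated on these $\pn$-modules agree with the concretely computed groups $H^p_F(-)$.
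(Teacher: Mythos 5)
Your proposal is correct and is essentially the paper's own proof: the paper simply invokes the Grothendieck spectral sequence associated to the composition $D^+(F^w\text{-}\widetilde{\mathrm{MIC}}((X,Y,\mathfrak{P})/K))\rightarrow D^+(\widetilde{\underline{\mathbf{M}\Phi}}^{w,\nabla}_{\ekd})\rightarrow D^+(K^\sigma)$, and since $\mathbf{R}\Gamma_F$ is \emph{defined} as the composite of the two derived functors, that spectral sequence is exactly the hyper-derived functor spectral sequence of $T=\mathrm{Hom}(\ekd,-)$ applied to $\mathbf{R}\Gamma_\rig(X/\ekd,E)$ that you construct via a Cartan--Eilenberg resolution. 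Your two bookkeeping identifications (cohomology of the complex recovering $H^q_\rig(X/\ekd,E)$ by the definition of $D^b(\pnm{\ekd})$, and $\mathbf{R}^pT$ on a $\pn$-module agreeing with $H^p_F$, which also gives the vanishing for $p>2$ needed for the five-term sequence) are precisely what the paper leaves implicit, so you have just spelled out the same argument in more detail.
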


\begin{proof} This is simply the Grothendieck spectral sequence associated to the composition
\[D^+(F^w\text{-}\widetilde{\mathrm{MIC}}^\dagger((X,Y,\mathfrak{P})/\ekd))\rightarrow D^+(\widetilde{\underline{\mathbf{M}\Phi}}^{w,\nabla}_{\ekd})\rightarrow D^+(K^\sigma).\]
\end{proof}

\begin{remark} \label{globalabs} Actually, I will also need a version of absolute Frobenius cohomology in the global case, as well as the local case, that is for algebraic varieties over $k$, rather than $F$. Needless to say, this is entirely similar to (and, if anything, easier than) the local case handled above. I will therefore refer to such absolute Frobenius cohomology groups $H^i_F(X/K,E)$ whenever $X/k$ and $E\in \fisoc{X/K}$ without further mention. Essentially all the expected formalism holds, in particular there will be a Leray spectral sequence as in Proposition \ref{leray} above.
\end{remark}

\section{The fundamental group of a Tannakian category}\label{fgtc}

In this section I will briefly recap some of the material from \cite{Del89} on algebraic geometry in Tannakian categories, and in particular the notion of the fundamental group of a Tannakian category. So let $\mathcal{T}$ be a Tannakian category over an arbitrary field $k$, with unit object $\mathbf{1}$. I have already essentially described (Definition \ref{hopfenriched}) what should constitute a Hopf algebra in $\mathcal{T}$, this can be put into a slightly broader framework as follows.

\begin{definition} A commutative $\mathcal{T}$-algebra is an object $A\in \mathrm{Ind}(\mathcal{T})$ together with maps $m:A\otimes A\rightarrow A$ and $u:\mathbf{1}\rightarrow A$ such that the diagrams
\[ \xymatrix{ A\otimes A \ar[r]^-{\tau}\ar[dr]_-{m}&  A\otimes A \ar[d]^-{m}& & A\otimes A \otimes A \ar[r]^-{m\otimes\mathrm{id}}\ar[d]_-{\mathrm{id}\otimes m}& A\otimes A \ar[d]^-{m}&  & A\ar[d]_-{u\otimes \mathrm{id}} \ar[dr]^-{\mathrm{id}} & &  \\
 & A  & & A\otimes A \ar[r]^-{m} & A &  & A\otimes A \ar[r]^-{m}& A} \]
commute, where $\tau$ is the `switch' map. A morphism of $\mathcal{T}$-algebras is then a morphism commuting with $m$ and $u$. The category of commutative $\mathcal{T}$-algebras is denoted $\mathrm{Alg}_\mathcal{T}$. 
\end{definition}

Define the category of affine $\mathcal{T}$-schemes to be the opposite category $\mathrm{Aff}_\mathcal{T}:=\mathrm{Alg}_\mathcal{T}^\mathrm{op}$.

\begin{definition} An affine group scheme over $\mathcal{T}$ is a group object in $\mathrm{Aff}_\mathcal{T}$. 
\end{definition}

Of course this can be described in terms of `Hopf $\mathcal{T}$-algebras' as in Definition \ref{hopfenriched}. If $L$ is a $k$-field and $\omega:\mathcal{T}\rightarrow \mathrm{Vec}_L$ is a fibre functor, then $\omega$ induces functors
\begin{align*} \omega:\mathrm{Alg}_\mathcal{T}&\rightarrow \mathrm{Alg}_L \\
\omega:\mathrm{Aff}_\mathcal{T} &\rightarrow \mathrm{Aff}_L \\
\omega:\mathrm{AffGrp}_\mathcal{T} & \rightarrow \mathrm{AffGrp}_L
\end{align*}
from the categories of $\mathcal{T}$-algebras (resp. affine $\mathcal{T}$-schemes, affine group schemes over $\mathcal{T}$) to the category of $L$-algebras (resp. affine $L$-schemes, affine group schemes over $L$).

For any such fibre functor $\omega$ there is an affine group scheme $\pi(\mathcal{T},\omega)$ over $L$ representing the functor of automorphisms of $\omega$, and the basic idea of the fundamental group of a Tannakian category is to package these together into a single affine group scheme over $\mathcal{T}$.

\begin{proposition}[\cite{Del89}, 5.11 and 6.1] Let $\mathcal{T}$ be a Tannakian category. Then there exists a unique affine group scheme $\pi(\mathcal{T})$ over $\mathcal{T}$ such that for any fibre functor
\[ \omega:\mathcal{T}\rightarrow \mathrm{Vec}_L \]
there is an isomorphism
\[  \omega(\pi(\mathcal{T}))\cong \pi(\mathcal{T},\omega),\]
functorial in $\omega$. The affine group scheme $\pi(\mathcal{T})$ is called the fundamental group of $\mathcal{T}$.
\end{proposition}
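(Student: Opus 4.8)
The plan is to construct $\pi(\mathcal{T})$ as an affine group scheme over $\mathcal{T}$ whose defining feature is a universal property with respect to fibre functors. Following Deligne's framework, the key observation is that for a Tannakian category $\mathcal{T}$, the functor $\underline{\mathrm{Aut}}^\otimes(\iden_{\mathcal{T}})$ of tensor automorphisms of the identity functor — interpreted internally, i.e. as a functor on $\mathcal{T}$-schemes — is representable by an affine group scheme in $\mathrm{Aff}_{\mathcal{T}}$. First I would make precise what it means to take automorphisms of the identity functor \emph{internally}: for a commutative $\mathcal{T}$-algebra $A$, one considers $\otimes$-automorphisms of the functor $X\mapsto X\otimes A$ from $\mathcal{T}$ to the category of $A$-modules in $\mathrm{Ind}(\mathcal{T})$, and shows this functor of $A$ is represented by a Hopf $\mathcal{T}$-algebra $\mathcal{O}(\pi(\mathcal{T}))$.

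The main technical input is Deligne's reconstruction theorem: a Tannakian category is equivalent to the category of representations of its fundamental groupoid/group scheme, and this reconstruction can be carried out using the regular representation. Concretely, I would exhibit $\mathcal{O}(\pi(\mathcal{T}))$ as the ind-object representing the universal endomorphism coalgebra, built from the objects of $\mathcal{T}$ themselves rather than from their images under a fixed fibre functor. The Hopf algebra structure (comultiplication, counit, antipode) comes from the tensor structure on $\mathcal{T}$ exactly as in the classical Tannakian case, with the difference that everything takes place internally to $\mathrm{Ind}(\mathcal{T})$. For existence, one may reduce to the split case over a fibre functor $\omega:\mathcal{T}\to \mathrm{Vec}_L$ (which exists after extending scalars, if $\mathcal{T}$ is not already neutral) and use faithfully flat descent along $\omega$ to descend the group scheme $\pi(\mathcal{T},\omega)$ back to an object of $\mathrm{Aff}_{\mathcal{T}}$; this is the content of Deligne's 6.1.

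Once $\pi(\mathcal{T})$ is constructed, verifying the universal property is comparatively formal. For any fibre functor $\omega:\mathcal{T}\to\mathrm{Vec}_L$, applying the induced functor $\omega:\mathrm{AffGrp}_{\mathcal{T}}\to\mathrm{AffGrp}_L$ to $\pi(\mathcal{T})$, I would check that $\omega(\pi(\mathcal{T}))$ represents $\underline{\mathrm{Aut}}^\otimes(\omega)$, i.e. that it is canonically isomorphic to $\pi(\mathcal{T},\omega)$; this reduces to the compatibility of the internal construction with $\omega$, since $\omega$ is exact and tensor-compatible and hence carries the internal automorphism functor to the external one. Functoriality in $\omega$ follows because all constructions are natural in the fibre functor. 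Uniqueness of $\pi(\mathcal{T})$ follows from the fact that an affine group scheme over $\mathcal{T}$ is determined by its values under all fibre functors together with the descent data, or more directly from Yoneda applied internally in $\mathrm{Aff}_{\mathcal{T}}$.

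The hard part will be making the \emph{internal} construction of $\underline{\mathrm{Aut}}^\otimes$ genuinely rigorous — that is, checking representability inside $\mathrm{Ind}(\mathcal{T})$ rather than merely in $\mathrm{Vec}_L$, and ensuring the Hopf $\mathcal{T}$-algebra axioms of Definition \ref{hopfenriched} hold at the level of morphisms in $\mathrm{Ind}(\mathcal{T})$. In practice, however, this is not new: since the entire statement is quoted verbatim from \cite{Del89} (5.11 and 6.1), the honest approach is simply to invoke those results, and I would present the proof as a citation together with a brief indication of how the internal automorphism functor is set up and why its representing object is naturally a Hopf algebra in $\mathcal{T}$ in the sense of Definition \ref{hopfenriched}.
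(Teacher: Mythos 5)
The paper gives no proof of this proposition at all---it is quoted directly from Deligne with the citation to \cite{Del89}, 5.11 and 6.1, exactly as your final paragraph recommends. Your sketch of the internal $\underline{\mathrm{Aut}}^\otimes(\iden)$ construction is a faithful outline of Deligne's argument, so your proposal matches the paper's approach (citation, with accurate context) and is correct.
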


This is of course functorial in $\mathcal{T}$, in that whenever there is a (faithful, exact, tensor) functor $\eta:\mathcal{T}\rightarrow \mathcal{T}'$ of Tannakian categories there is an induced homomorphism
\[ \eta^*:\pi(\mathcal{T}')\rightarrow \eta(\pi(\mathcal{T}))\]
of fundamental groups. To put this in a relative situation, suppose that $\cur{T},\cur{T}'$ is a pair of Tannakian categories and there is a pair of (faithful, exact, tensor) functors $\eta:\cur{T}\leftrightarrows \cur{T}':\eta'$ such that $\eta \circ\eta' \cong \mathrm{id}$. Then there is a homomorphism
\[  \eta'^*: \pi(\mathcal{T}) \rightarrow \eta'(\pi(\mathcal{T}'))\]
of affine group schemes over $\cur{T}'$, define 
\[ \pi(\cur{T}/\cur{T}',\eta):= \eta( \ker \eta'^*);\]
this is an affine group scheme over $\cur{T}'$. I will in general refer to $\pi(\cur{T}/\cur{T}',\eta)$ as the relative fundamental group associated to the pair of functors $\eta:\cur{T}\leftrightarrows \cur{T}':\eta'$, note the asymmetry in the requirement that $\eta \circ\eta' \cong \mathrm{id}$. Finally, I will need  the following result.

\begin{proposition}[\cite{Str07}, Theorem 26.4] \label{se} Let $\mathcal{T}$ be a Tannakian category over a field $k$, let $L/k$ be an arbitrary field extension, and let $\omega:\mathcal{T}\rightarrow \mathrm{Vec}_L$ be a fibre functor with values in $L$. Then there is a canonical equivalence of categories
\[ \mathcal{T}_L\isomto \mathrm{Rep}_{L}(\pi(\mathcal{T},\omega)) \]
between the extension of scalars of $\mathcal{T}$ to $L$ and the category of $L$-linear representations of $\pi(\mathcal{T},\omega)$.
\end{proposition}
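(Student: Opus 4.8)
The plan is to reduce the statement to the fundamental theorem of Tannaka duality for \emph{neutral} Tannakian categories, by exhibiting $\mathcal{T}_L$ as a neutral Tannakian category over $L$ whose fundamental group is precisely $\pi(\mathcal{T},\omega)$. First I would invoke the universal property of the extension of scalars. The category $\mathcal{T}_L=\mathcal{T}\otimes_k L$ is, by construction, the initial $L$-linear abelian tensor category equipped with a $k$-linear tensor functor $\iota:\mathcal{T}\to\mathcal{T}_L$; concretely its objects are objects of $\mathcal{T}$ together with an action of $L$ (equivalently, coherent modules over the commutative algebra object $\mathbf{1}\otimes_k L$ in $\mathrm{Ind}(\mathcal{T})$). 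Since $\omega:\mathcal{T}\to\mathrm{Vec}_L$ is a $k$-linear exact faithful tensor functor whose target is $L$-linear, this universal property produces a unique $L$-linear tensor functor $\omega_L:\mathcal{T}_L\to\mathrm{Vec}_L$ with $\omega_L\circ\iota\cong\omega$. I would then check that $(\mathcal{T}_L,\omega_L)$ is a neutral Tannakian category over $L$: exactness and faithfulness of $\omega_L$ are inherited from $\omega$ because every object of $\mathcal{T}_L$ is a subquotient of one in the image of $\iota$, rigidity is inherited since duals in $\mathcal{T}_L$ are computed from duals in $\mathcal{T}$, and $\mathrm{End}_{\mathcal{T}_L}(\mathbf{1})=L$ because $\mathrm{End}_\mathcal{T}(\mathbf{1})=k$.

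With $\mathcal{T}_L$ neutral Tannakian over $L$, the decisive step is to identify the two affine group schemes over $L$,
\[ \pi(\mathcal{T}_L,\omega_L)\cong \pi(\mathcal{T},\omega). \]
Both represent the functor $R\mapsto \mathrm{Aut}^\otimes(\omega\otimes_L R)$ on $L$-algebras $R$. A tensor automorphism of $\omega_L\otimes_L R$ is determined by, and freely determined by, its restriction along $\iota$ to the objects coming from $\mathcal{T}$, because these generate $\mathcal{T}_L$ under the formation of subquotients, direct sums, tensor products, duals and the $L$-action, all of which any tensor automorphism must respect. Hence the restriction map $\mathrm{Aut}^\otimes(\omega_L\otimes_L R)\to\mathrm{Aut}^\otimes(\omega\otimes_L R)$ is a bijection, functorially in $R$, yielding the claimed isomorphism of group schemes.

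Finally, applying the fundamental theorem of Tannakian duality for neutral categories \cite{Del89} to $(\mathcal{T}_L,\omega_L)$ produces a canonical equivalence $\mathcal{T}_L\isomto\mathrm{Rep}_L(\pi(\mathcal{T}_L,\omega_L))$; substituting the identification above gives $\mathcal{T}_L\isomto\mathrm{Rep}_L(\pi(\mathcal{T},\omega))$, as required. Canonicity and functoriality in $\omega$ follow formally from the functoriality of each construction used, since both $\omega_L$ and the comparison of automorphism group schemes are natural in $\omega$.

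I expect the main obstacle to be the careful verification that the base change $\mathcal{T}_L$ is again Tannakian over $L$ and that $\omega_L$ is a genuine fibre functor: the construction $\mathcal{T}\otimes_k L$ is delicate, as one must pass to a suitable pseudo-abelian (ind-)completion to retain abelianness, and confirming exactness, faithfulness and rigidity of $\omega_L$—together with the generation statement that underpins the identification of the two automorphism group schemes—is where the real work lies, even though each individual property is ultimately inherited from $\mathcal{T}$ and $\omega$.
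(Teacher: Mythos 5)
You are attempting a statement the paper itself does not prove---it is quoted from [Str07, Theorem 26.4] with no argument given---so the relevant comparison is with the standard proof in the literature (Deligne's theory in \cite{Del89} and its expositions), whose outline your plan correctly mirrors: neutralize $\mathcal{T}_L$ via the induced fibre functor $\omega_L$, identify $\pi(\mathcal{T}_L,\omega_L)$ with $\pi(\mathcal{T},\omega)$, and invoke neutral Tannaka duality. Your middle step is indeed essentially routine: objects of $\mathcal{T}_L$ are by construction cokernels of maps $\iota(X_1)\rightarrow\iota(X_0)$, morphisms between such objects are $L$-linear combinations of morphisms coming from $\mathcal{T}$, and any tensor automorphism of $\omega\otimes_L R$ has $R$-linear components, so it commutes with these and extends uniquely; granting the first step, the bijection on automorphism functors and hence the conclusion follow.

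The genuine gap is in the first step, exactly where you yourself locate the ``real work'': the assertion that ``exactness and faithfulness of $\omega_L$ are inherited from $\omega$ because every object of $\mathcal{T}_L$ is a subquotient of one in the image of $\iota$'' is not an argument. Exactness of a functor on a class that generates under quotients controls right-exactness at best; preservation of kernels of arbitrary morphisms of $\mathcal{T}_L$ is precisely the nontrivial content, and no inheritance argument can supply it. Concretely, $\omega_L(M)\cong\omega(M)\otimes_{L\otimes_k L}L$ via the multiplication map $L\otimes_k L\rightarrow L$, and $L$ is not flat over $L\otimes_k L$ unless $L/k$ is separable algebraic, so left-exactness cannot be read off from the construction. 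Likewise, that $\mathcal{T}_L$ is abelian and rigid for an arbitrary (possibly infinite or inseparable) extension $L/k$ is itself a theorem, not a formality: cokernels of maps of dualizable objects need not be dualizable in general, and the abelianness of the scalar extension is the subject of dedicated work. The known proofs establish all of these facts \emph{together with} the equivalence, rather than before it---for instance by comonadic (Barr--Beck) descent along $\omega$, presenting $\mathrm{Ind}(\mathcal{T})$ as comodules over the coend coalgebra of $\omega$, an $L\otimes_k L$-coalgebroid $C$, and then identifying $\mathcal{T}_L$ with finite-dimensional comodules over the Hopf algebra $C\otimes_{L\otimes_k L}L=\mathcal{O}(\pi(\mathcal{T},\omega))$; exactness, faithfulness, abelianness and rigidity of $(\mathcal{T}_L,\omega_L)$ fall out of this equivalence instead of feeding into it. Without some such input, your reduction is circular at its decisive point: granting that $(\mathcal{T}_L,\omega_L)$ is neutral Tannakian over $L$ is essentially equivalent to the proposition being proved.
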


\section{\texorpdfstring{$\pn$}{pn}-module structure via the Tannakian formalism}\label{maini}

Let $X/F$ be a geometrically connected variety, and $x\in X(F)$ a rational point. Then there is a fibre functor
$$ x^*:\cur{N}\mathrm{Isoc}^\dagger(X/\ekd)\rightarrow \mathrm{Vec}_{\ekd}
$$
on the Tannakian category $\cur{N}\mathrm{Isoc}^\dagger(X/\ekd)$ and the (unipotent) rigid fundamental group of $X$ with base point $x$ is defined by
$$ \pi_1^\rig(X/\ekd,x):= \underline{\mathrm{Aut}}^\otimes(x^*) $$
using the Tannakian formalism. In this section I will show how to put a `$\pn$-module structure' on $\pi_1^\rig(X/\ekd,x)$, using similar methods to those in \cite{Laz15}. The exact meaning of a $\pn$-module structure in the non-abelian situation is the following.

\begin{definition} A $\pn$-module structure on $\pi_1^\rig(X/\ekd,x)$ is a Hopf $\underline{\mathbf{M}\Phi}_{\ekd}^\nabla$-algebra $\cur{A}^{\rig,\vee}_{\infty,x}$ whose underlying Hopf $\ekd$-algebra is $A^{\rig,\vee}_{\infty,x}:=\mathcal{O}(\pi_1^\rig(X/\ekd,x))$.
\end{definition} 
To define such a structure, consider the category $F\text{-}\mathrm{Isoc}^\dagger(X/K)$ of $F$-isocrystals on $X/K$, and the natural `pullback' functor
$$ -\otimes \mathcal{O}_{X/K}^\dagger: \underline{\mathbf{M}\Phi}_{\ekd}^{\nabla} \rightarrow F\text{-}\mathrm{Isoc}^\dagger(X/K).
$$Denote by $\cur{N}_\mathrm{rel}F\text{-}\mathrm{Isoc}^\dagger(X/K)$ the full subcategory of $F\text{-}\mathrm{Isoc}^\dagger(X/K)$ consisting of \emph{relatively unipotent} objects, that is objects $E$ admitting a filtration $F_\bu E$ whose associated graded $\bigoplus _i F_{i+1}E/F_iE$ is in the essential image of $-\otimes \mathcal{O}_{X/K}^\dagger$. This should not be confused with the subcategory $\cur{N}\fisoc{X/K}$ consisting of objects which are iterated extensions of the unit object $\mathcal{O}_{X/K}^\dagger$.

\begin{lemma} The category $\cur{N}_\mathrm{rel}\fisoc{X/K}$ is Tannakian over $K^{\sigma}$.
\end{lemma}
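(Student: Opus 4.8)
The plan is to realise $\cur{N}_\mathrm{rel}\fisoc{X/K}$ as a Tannakian subcategory of $\fisoc{X/K}$, which is already known to be Tannakian over $K^{\sigma}$ by Corollary \ref{tann2}. Concretely, I will check that the full subcategory of relatively unipotent objects contains the unit and is stable under tensor products, duals and subquotients; a full tensor subcategory of a Tannakian category with these closure properties is again Tannakian over the same base field, with $\mathrm{End}(\mathbf{1})=K^{\sigma}$ and a fibre functor obtained by restriction. Write $\iota:=-\otimes\mathcal{O}_{X/K}^\dagger$ and call an object \emph{constant} if it lies in the essential image of $\iota$; relatively unipotent objects are then precisely the iterated extensions of constant objects.

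The closure under the tensor structure is formal. The unit $\mathcal{O}_{X/K}^\dagger=\iota(\ekd)$ is constant. Since $\iota$ is a tensor functor between rigid categories it commutes with $\otimes$ and with duality, so $\iota(M)\otimes\iota(M')\cong\iota(M\otimes_{\ekd}M')$ and $\iota(M)^\vee\cong\iota(M^\vee)$ are constant; feeding the tensor-product (resp. orthogonal) filtration to a pair of relatively unipotent objects then exhibits $E\otimes E'$ (resp. $E^\vee$) as relatively unipotent.

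The substance of the lemma is closure under subquotients, and by duality it suffices to treat subobjects; moreover, intersecting the defining filtration of a relatively unipotent $E$ with a subobject $E'\subseteq E$ produces a filtration of $E'$ whose graded pieces inject into the constant objects $\mathrm{gr}_iE$. I am therefore reduced to the key assertion that \emph{a subobject of a constant object is again constant}. To prove this, I will exploit the faithful, exact tensor functor $u:\fisoc{X/K}\to\isoc{X/\ekd}$ used in the proof of Corollary \ref{tann2}. If $E'\subseteq\iota(M)$, then $u(E')$ is a subobject of $u(\iota(M))\cong M\otimes_{\ekd}\mathcal{O}_{X/\ekd}^\dagger$, which is a \emph{trivial} isocrystal (a finite direct sum of copies of the unit, the $\partial_t$-connection on $M$ being forgotten by $u$). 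Because $\isoc{X/\ekd}$ is neutral Tannakian via $x^*$, it is equivalent to $\mathrm{Rep}_{\ekd}(G)$ for $G=\underline{\mathrm{Aut}}^\otimes(x^*)$, and a subobject of a trivial representation is trivial; hence $u(E')$ is a trivial isocrystal over $\ekd$.

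It then remains to lift this triviality back to $\fisoc{X/K}$. Setting $N:=H^0_\rig(X/\ekd,E')\in\pnm{\ekd}$, the counit of the adjunction of Proposition \ref{proj} supplies a morphism $c:\iota(N)=N\otimes\mathcal{O}_{X/K}^\dagger\to E'$. Applying $u$ carries $c$ to the evaluation map $H^0_\rig(X/\ekd,u(E'))\otimes_{\ekd}\mathcal{O}_{X/\ekd}^\dagger\to u(E')$, which is an isomorphism precisely because $u(E')$ is trivial. Since a faithful exact functor between abelian categories reflects isomorphisms, $c$ is itself an isomorphism, so $E'\cong\iota(N)$ is constant, as required. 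Running the dual argument gives the same conclusion for quotients, and combining all of the above shows that $\cur{N}_\mathrm{rel}\fisoc{X/K}$ is a Tannakian subcategory of $\fisoc{X/K}$, hence Tannakian over $K^{\sigma}$. The one genuinely delicate point is this descent of constancy through $u$: it is what forces the use of the faithful exact functor to $\isoc{X/\ekd}$ together with the trivial-representation argument, rather than any direct manipulation of filtrations inside $\fisoc{X/K}$.
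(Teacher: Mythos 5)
Your proof is correct and is essentially the argument the paper intends: the paper's entire proof is the one line ``Follows from Corollary \ref{tann2}'', i.e.\ exhibiting $\cur{N}_\mathrm{rel}\fisoc{X/K}$ as a full tensor subcategory of the Tannakian category $\fisoc{X/K}$, containing the unit and closed under duals and subquotients, which is exactly what you verify in detail. Your one substantive step --- that a subobject of a constant object is constant, proved via the adjunction counit of Proposition \ref{proj} together with faithful exactness of the forgetful functor to $\isoc{X/\ekd}$ --- is precisely the mechanism the paper itself deploys in claims (1) and (2) of the proof of Theorem \ref{bc}, so you have merely made explicit what the author left implicit.
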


\begin{proof} Follows from Corollary \ref{tann2}.
\end{proof}

The point $x$ gives rise to a functor 
$$ x^*: \cur{N}_\mathrm{rel}F\text{-}\mathrm{Isoc}^\dagger(X/K)\rightarrow \pnm{\ekd}
$$
which splits $-\otimes \mathcal{O}_{X/K}^\dagger$, and as described in \S\ref{fgtc} there is associated to this set-up an `affine group scheme over $\pnm{\ekd}$', which I will for now denote by $G^\rig_{X,x}$. This is obtained by pulling back the kernel of the natural map of fundamental groups
$$ \pi(\cur{N}_\mathrm{rel}F\text{-}\mathrm{Isoc}^\dagger(X/K)) \rightarrow \pi(\pnm{\ekd}))\otimes \mathcal{O}_{X/K}^\dagger
$$
by $x^*$. Concretely, this means that $G^\rig_{X,x}$ is the `formal spectrum' of an Hopf algebra $\cur{A}^{\rig,\vee}_{\infty,x}$ in $\pnm{\ekd}$, and if $\omega$ is a fibre functor on $\pnm{\ekd}$, with values in some field $L$, then $\omega(\cur{A}^{\rig,\vee}_{\infty,x} )$ is a Hopf algebra over $L$, functorially in $\omega$; I will denote by $\omega(G^\rig_{X,x})$ the spectrum of this Hopf algebra. In particular, considering the natural fibre functor
$$ \omega: \pnm{\ekd} \rightarrow \mathrm{Vec}_{\ekd}
$$
given by forgetting the connection and Frobenius produces an affine group scheme $\omega(G^\rig_{X,x})$ over $\ekd$. The forgetful functor
$$ \cur{N}_\mathrm{rel}F\text{-}\mathrm{Isoc}^\dagger(X/K) \rightarrow \cur{N}\mathrm{Isoc}^\dagger(X/\ekd)
$$
induces a homomorphism of group schemes
$$ \pi_1^\rig(X/\ekd,x)\rightarrow \omega(G^\rig_{X,x})
$$
over $\ekd$.

\begin{theorem} \label{bc}This map is an isomorphism.
\end{theorem}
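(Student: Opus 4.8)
The plan is to make the homomorphism explicit, check it lands in the expected kernel, and then verify that it is an isomorphism by a Tannakian criterion reduced to a cohomological comparison. Throughout I pass to the affine group schemes over $\ekd$ obtained by applying the forgetful fibre functor $\omega\colon\pnm{\ekd}\to\mathrm{Vec}_{\ekd}$, writing $\Pi:=\pi(\cur{N}_\mathrm{rel}\fisoc{X/K},\omega\circ x^*)$ and $G:=\pi(\pnm{\ekd},\omega)$, so that $\omega(G^\rig_{X,x})=\ker(p\colon\Pi\to G)$, where $p$ is induced by $-\otimes\mathcal{O}_{X/K}^\dagger$ and is split by the section $s\colon G\to\Pi$ induced by $x^*$. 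The map of the theorem is the one induced by the forgetful tensor functor $\Phi\colon\cur{N}_\mathrm{rel}\fisoc{X/K}\to\cur{N}\isoc{X/\ekd}$; that it factors through $\ker p$ is immediate, since $\Phi\circ(-\otimes\mathcal{O}_{X/K}^\dagger)$ carries a $\pn$-module to the associated trivial isocrystal and so factors through $\mathrm{Vec}_{\ekd}$, inducing the trivial homomorphism on fundamental groups and forcing $p\circ\bar\Phi=1$. It thus remains to prove that the resulting $\bar\Phi\colon\pi_1^\rig(X/\ekd,x)\to\ker p$ is an isomorphism.

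Since $\cur{N}\isoc{X/\ekd}$ is unipotent, both source and target are pro-unipotent, and a homomorphism of pro-unipotent affine group schemes is an isomorphism as soon as it induces an isomorphism on $H^1(-,\mathbf 1)$ and an injection on $H^2(-,\mathbf 1)$ (these controlling generators and relations). Equivalently, it suffices to show that the induced functor $\bar\Phi^*\colon\mathrm{Rep}_{\ekd}(\ker p)\to\cur{N}\isoc{X/\ekd}$ is an isomorphism on $\mathrm{Ext}^1(\mathbf 1,\mathbf 1)$ and an injection on $\mathrm{Ext}^2(\mathbf 1,\mathbf 1)$. On the target these extension groups are rigid cohomology, $\mathrm{Ext}^i_{\cur{N}\isoc{X/\ekd}}(\mathbf 1,\mathbf 1)\cong H^i_\rig(X/\ekd,\mathcal{O}^\dagger_{X/\ekd})$ (for $i=1$ by the corollary computing extension groups); so everything comes down to computing $H^i(\ker p,\mathbf 1)=\mathrm{Ext}^i_{\mathrm{Rep}_{\ekd}(\ker p)}(\mathbf 1,\mathbf 1)$ and matching it, via $\bar\Phi^*$, with rigid cohomology.

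The crux is to identify the Hochschild--Serre spectral sequence of the split extension $1\to\ker p\to\Pi\xrightarrow{p}G\to 1$,
\[ H^p(G,H^q(\ker p,\mathbf 1))\Rightarrow H^{p+q}(\Pi,\mathbf 1),\]
with the Leray spectral sequence of Proposition \ref{leray},
\[ H^p_F\big(H^q_\rig(X/\ekd,\mathcal{O}^\dagger_{X/\ekd})\big)\Rightarrow H^{p+q}_F(X/K,\mathcal{O}^\dagger_{X/K}).\]
The two abutments agree because $\mathrm{Ext}^\ast_{\cur{N}_\mathrm{rel}\fisoc{X/K}}(\mathbf 1,\mathbf 1)\cong H^\ast_F(X/K,\mathcal{O}^\dagger_{X/K})$, and the base-direction functors agree because absolute Frobenius cohomology $H^p_F$ of a $\pn$-module is exactly the group cohomology $H^p(G,-)$ of the base. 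Using the section $s$ to split the edge maps, comparison of the two spectral sequences yields a $G$-equivariant isomorphism $H^q(\ker p,\mathbf 1)\cong H^q_\rig(X/\ekd,\mathcal{O}^\dagger_{X/\ekd})$ under which $\bar\Phi^*$ is the identity on underlying $\ekd$-spaces; in particular $\bar\Phi^*$ is an isomorphism on $\mathrm{Ext}^1$ and \emph{a fortiori} an injection on $\mathrm{Ext}^2$, and the criterion applies.

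The main obstacle is precisely this identification of spectral sequences, complicated by a change of base field: $\cur{N}_\mathrm{rel}\fisoc{X/K}$ and $\pnm{\ekd}$ are Tannakian over $K^\sigma$, whereas $\mathrm{Rep}(\ker p)$ is $\ekd$-linear, so one must first invoke Stix's theorem (Proposition \ref{se}) to realise $\mathrm{Rep}_{\ekd}(\Pi)$ and $\mathrm{Rep}_{\ekd}(G)$ as the scalar extensions to $\ekd$ of $\cur{N}_\mathrm{rel}\fisoc{X/K}$ and $\pnm{\ekd}$, and then propagate this base change through the comparison. Ensuring that the resulting isomorphism on extension groups is genuinely the one induced by $\Phi$, rather than an abstract coincidence of dimensions, is where the projection formula (Proposition \ref{proj}) and the cohomological base change results of \cite{LP16} do the essential work.
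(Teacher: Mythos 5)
Your overall strategy --- reduce to the pro-unipotent Stallings-type criterion (iso on $H^1$, injection on $H^2$) and then compute the cohomology of $\ker p$ by matching a Hochschild--Serre spectral sequence with the Leray spectral sequence of Proposition \ref{leray} --- is genuinely different from the paper's, which instead invokes the Esnault--Hai--Sun exactness criterion of \cite[Appendix A]{EHS08}: there one must verify three concrete conditions (constant objects come from the base, maximal constant subobjects lift, and every unipotent isocrystal is a quotient of a relatively unipotent one), and the heart of the proof is the inductive construction of universal objects $W_n\in\cur{N}_\mathrm{rel}\fisoc{X/K}$ extending Hadian's $U_n$, with the base point used to split the relevant maps in the five-term sequence and kill the obstruction in $H^2_F$. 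Unfortunately, as written your route has gaps precisely at its crux. First, the identification of $H^p_F(M)$ with group cohomology $H^p(G,M)$ of the Tannaka group $G=\pi(\pnm{\ekd},\omega)$ is asserted but not available: $H^p_F$ is by definition $\mathrm{Ext}$ computed in the ambient category $\widetilde{\underline{\mathbf{M}\Phi}}^{w,\nabla}_{\ekd}$ of possibly infinite-dimensional modules with merely \emph{weak} (non-bijective) Frobenius, whereas $H^p(G,-)$ is $\mathrm{Ext}$ in the category of $G$-comodules, i.e.\ in $\mathrm{Ind}$ of the scalar extension of $\pnm{\ekd}$; the paper explicitly warns that its $D^b(\pnm{\ekd})$ is \emph{not} the derived category of $\pnm{\ekd}$, and objects such as those appearing in the resolution $\mathcal{C}^*_{\ekd}$ are not comodules. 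The same issue affects your claim that the abutments agree, since $\mathrm{Ext}^*_{\cur{N}_\mathrm{rel}\fisoc{X/K}}(\mathbf 1,\mathbf 1)\cong H^*_F(X/K)$ is established in the paper only in degree $1$.

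Second, even granting those identifications, you never construct a \emph{map} of spectral sequences: two spectral sequences with isomorphic base columns and isomorphic abutments need not have isomorphic fibre terms, so "comparison yields a $G$-equivariant isomorphism $H^q(\ker p,\mathbf 1)\cong H^q_\rig(X/\ekd,\mathcal{O}^\dagger_{X/\ekd})$" does not follow; moreover the five-term sequences would at best control the invariants $H^1(\ker p,\mathbf 1)^G$, not the full $G$-module, so you would additionally have to run the argument with all twists $M\otimes\mathcal{O}^\dagger_{X/K}$ and the projection formula (Proposition \ref{proj}) to recover $H^1(\ker p,\mathbf 1)$ itself. Third, and most seriously, the final step "\emph{a fortiori} an injection on $\mathrm{Ext}^2$" is a non sequitur: an isomorphism on $H^1$ never implies injectivity on $H^2$ (otherwise the criterion would not need its second condition), and the identification $\mathrm{Ext}^2_{\cur{N}\isoc{X/\ekd}}(\mathbf 1,\mathbf 1)\cong H^2_\rig(X/\ekd)$ that you tacitly use is not proved in the paper and is not expected to hold as stated --- for unipotent completions one expects at best an injection of $H^2$ of the group into $H^2$ of the space, and establishing even that in this Tannakian setting (e.g.\ injectivity of Yoneda $\mathrm{Ext}^2$ from the extension-closed full subcategory $\cur{N}\isoc{X/\ekd}$ into the ambient category of modules with connection) requires a separate argument. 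Until these three points are repaired, the proposal does not constitute a proof; if you want to pursue this route, the missing comparison map should be built from a functorial identification of both spectral sequences as Grothendieck spectral sequences of the same composite of derived functors, which is essentially where the paper's explicit construction of the $W_n$ does the work instead.
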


\begin{proof} Let $\cur{N}_\mathrm{rel}F\text{-}\mathrm{Isoc}^\dagger(X/K)\otimes {\ekd}$ and $\pnm{\ekd}\otimes \ekd$ denote the formal extension of scalars of from $K^\sigma$ to $\ekd$. Note that by  of \cite[Theorem 24.1]{Str07} the natural forgetful functor
\[ \cur{N}_\mathrm{rel}F\text{-}\mathrm{Isoc}^\dagger(X/K)\rightarrow \cur{N}\mathrm{Isoc}^\dagger(X/\ekd)\]
factors uniquely through $\cur{N}_\mathrm{rel}F\text{-}\mathrm{Isoc}^\dagger(X/K)\otimes {\ekd}$. Then using Lemma \ref{se} together with the main result from \cite[Appendix A]{EHS08}, it suffices to prove the following three claims:
\begin{enumerate} \item if $E \in \cur{N}_\mathrm{rel}F\text{-}\mathrm{Isoc}^\dagger(X/K)\otimes {\ekd}$ is such that the underlying object in $\cur{N}\mathrm{Isoc}^\dagger(X/\ekd)$ is constant, then $E\cong M\otimes\mathcal{O}_{X/K}^\dagger$ for some $M\in \pnm{\ekd} \otimes \ekd $;
\item if $E \in  \cur{N}_\mathrm{rel}F\text{-}\mathrm{Isoc}^\dagger(X/K) \otimes \ekd $ and $E'_0\subset E$  in $\cur{N}\mathrm{Isoc}^\dagger(X/\ekd)$ is the largest constant sub-isocrystal over $\ekd$, then there exists some $E'\subset E$ in $\cur{N}_\mathrm{rel}F\text{-}\mathrm{Isoc}(X/K) \otimes \ekd$ whose underlying object in $\cur{N}\mathrm{Isoc}^\dagger(X/\ekd)$ is $E'_0$;
\item any $E \in\cur{N}\mathrm{Isoc}^\dagger(X/\ekd)$ is a quotient of an object in $\cur{N}_\mathrm{rel}F\text{-}\mathrm{Isoc}^\dagger(X/K)\otimes \ekd$.
\end{enumerate}

The first two are entirely straightforward, I will show them for objects in the non-scalar extended category $\cur{N}_\mathrm{rel}F\text{-}\mathrm{Isoc}^\dagger(X/K)$, the claim for the more general objects follows more or less instantly from the functoriality of the constructions. The point is that for any $E\in F\text{-}\mathrm{Isoc}^\dagger(X/K)$ there is a natural adjunction map
$$ H^0_\rig(X/\ekd,E) \otimes \mathcal{O}_{X/K}^\dagger \rightarrow E
$$
in $F\text{-}\mathrm{Isoc}^\dagger(X/K)$, and $E$ is trivial in $\mathrm{Isoc}^\dagger(X/\ekd)$ iff this is an isomorphism in $\mathrm{Isoc}^\dagger(X/\ekd)$, which is iff it is an isomorphism in $F\text{-}\mathrm{Isoc}^\dagger(X/K)$. Similarly, if $E'_0\subset E$ is the largest constant sub-isocrystal over $\ekd$, then $E':= H^0_\rig(X/\ekd,E)\otimes \mathcal{O}_{X/K}^\dagger$ is a subobject of $E$ giving rise to $E'_0$. 

The third, however, is a lot more involved, and involves constructing certain `universal' unoipotent objects $U_n\in \isoc{X/\ekd}$, of which every other unipotent object is a quotient, and then showing that these universal objects extend to $\cur{N}_\mathrm{rel}\fisoc{X/K}$, and therefore (trivially) to $\cur{N}_\mathrm{rel}\fisoc{X/K}\otimes \ekd$. The argument is essentially the same as that in \cite[\S3]{Laz15}, following the original idea of Hadian \cite[\S2]{Had11}. I will therefore only describe how to construct these objects and state their properties, for proofs the reader should consult \cite{Had11,Laz15}.

Recursively define objects $U_n\in\isoc{X/\ekd}$ such that $U_1=\mathcal{O}^\dagger_{X/\ekd}$ is the trivial object, and $U_{n+1}$ is an extension of $U_n$ by $H^1_\rig(X/\ekd,U_n^\vee)^\vee \otimes \mathcal{O}_{X/K}^\dagger$ corresponding to the identity under the composite isomorphism
\begin{align*} \mathrm{Ext}_{\isoc{X/\ekd}}&(U_n,H^1_\rig(X/\ekd,U_n^\vee)^\vee\otimes \mathcal{O}_{X/K}^\dagger) \\ &\cong H^1_\rig(X/\ekd,U_n^\vee\otimes H^1_\rig(X/\ekd,U_n^\vee)^\vee) \\
&\cong H^1_\rig(X/\ekd,U_n^\vee)\otimes_{\ekd} H^1_\rig(X/\ekd,U_n^\vee)^\vee \\
&\cong \mathrm{End}_{\ekd}(H^1_\rig(X/\ekd,U_n^\vee)).
\end{align*}
Letting $u_1=1\in (\mathcal{O}_{X/\ekd}^\dagger)_x$ and choosing a compatible system $u_n\in (U_n)_x $ mapping to $u_1$, then the pro-object $\left\{(U_n,u_n)\right\}$ is \emph{universal} in the sense that for any object $E\in\cur{N}\isoc{X/\ekd}$ of unipotence degree $\leq n$, and any $e\in E_x$, there exists a unique morphism $\phi:U_n\rightarrow E $ such that $\phi(u_n)=e$. Hence it suffices to extend the $U_n$ to objects $W_n\in \cur{N}_\mathrm{rel}F\text{-}\isoc{X/K}$. 

As in \cite{Laz15}, I will do this inductively, starting with $W_1=\mathcal{O}^\dagger_{X/K}$ extending $U_1=\mathcal{O}^\dagger_{X/\ekd}$. I will also strengthen the induction hypothesis to require in addition that:
\begin{enumerate} \item $H^0_\rig(X/\ekd,W_{n}^\vee)\cong \ekd$ as $\pn$-modules;
\item there exists a map $x^*W_n^\vee\rightarrow \ekd$ of $\pn$-modules such that the composite 
\[ \ekd\cong H^0_\rig(X/\ekd,W_{n}^\vee) \cong x^*(H^0_\rig(X/\ekd,W_{n}^\vee)\otimes \mathcal{O}_{X/K}^\dagger) \rightarrow x^*W_{n}^\vee \rightarrow \ekd \]
is an isomorphism.
\end{enumerate}
Assume that $W_n$ has been constructed satisfying all the required conditions, I will construct $W_{n+1}$ as an extension of $W_n$ by $H^1_\rig(X/\ekd,W_n^\vee)^\vee \otimes\mathcal{O}_{X/K}^\dagger$. Using the five term exact sequence arising from Proposition \ref{leray}, together with the `projection formula' (i.e. Proposition \ref{proj}) and the induction hypothesis there is the following exact sequence
\begin{align*} 0 \rightarrow H^1_F(H^1_\rig(X/\ekd,W_n^\vee)^\vee) \rightarrow \mathrm{Ext}^1_{\fisoc{X/K}}&(W_n,H^1_\rig(X/\ekd,W_n^\vee)^\vee \otimes\mathcal{O}_{X/K}^\dagger) \\
 \rightarrow \mathrm{End}_{\pnm{\ekd}}(H^1_\rig(X/\ekd,W_n^\vee)) &\rightarrow H^2_F(H^1_\rig(X/\ekd,W_n^\vee)^\vee) \\
 \rightarrow H^2_F(X/K,&W_n^\vee\otimes H^1_\rig(X/\ekd,W_n^\vee)^\vee)\rightarrow \ldots
 \end{align*}
Now use the hypothesised map $x^*W_n^\vee\rightarrow \ekd$ to split the maps
\begin{align*} H^1_F(H^1_\rig(X/\ekd,W_n^\vee)^\vee) \rightarrow \mathrm{Ext}^1_{\fisoc{X/K}}&(W_n,H^1_\rig(X/\ekd,W_n^\vee)^\vee \otimes\mathcal{O}_{X/K}^\dagger) \\ &=H^1_F(X/K,W_n^\vee\otimes H^1_\rig(X/\ekd,W_n^\vee)^\vee) 
\end{align*}
and 
\[
H^2_F(H^1_\rig(X/\ekd,W_n^\vee)^\vee) \rightarrow H^2_F(X/K,W_n^\vee\otimes H^1_\rig(X/\ekd,W_n^\vee)^\vee)
\]
appearing in the above sequence, and hence define $W_{n+1}$ to be the extension class 
\[[W_{n+1}]\in \mathrm{Ext}^1_{\fisoc{X/K}}(W_n,H^1_\rig(X/\ekd,W_n^\vee)^\vee \otimes\mathcal{O}_{X/K}^\dagger)\]
which maps to the identity in $\mathrm{End}_{\pnm{\ekd}}(H^1_\rig(X/\ekd,W_n^\vee))$ and to zero in $H^1_F(H^1_\rig(X/\ekd,W_n^\vee)^\vee)$. This is clearly an extension of $U_{n+1}$ to $\mathcal{N}_f\fisoc{X/K}$. To complete the proof of the theorem, therefore, I must show that $H^0_\rig(X/\ekd,W_{n+1}^\vee)\cong \ekd$ and that there exists a map $x^*W_{n+1}^\vee\rightarrow \ekd$ such that 
\[ \ekd\cong H^0_\rig(X/\ekd,W_{n}^\vee) \cong x^*(H^0_\rig(X/\ekd,W_{n}^\vee)\otimes \mathcal{O}_{X/K}^\dagger) \rightarrow x^*W_{n}^\vee \rightarrow \ekd \]
is an isomorphism.

To show the first of these, consider the exact sequence in cohomology
\[ 0\rightarrow H^0_\rig(X/\ekd,W_n^\vee)\rightarrow H^0_\rig(X/\ekd,W_{n+1}^\vee)\rightarrow \ldots \]
associated to the short exact sequence
\[ 0\rightarrow W_n^\vee \rightarrow W_{n+1}^\vee \rightarrow H^1_\rig(X/\ekd,W_n^\vee)\otimes\mathcal{O}_{X/K}^\dagger\rightarrow 0.\]
in $\fisoc{X/K}$. The induced map
\[H^0_\rig(X/\ekd,W_n^\vee)\rightarrow H^0_\rig(X/\ekd,W_{n+1}^\vee) \]
is an isomorphism after forgetting $\pn$-module structures (c.f. Lemma 3.17 of \cite{Laz15} and the following paragraph), and hence is an isomorphism. Therefore by the induction hypothesis $H^0_\rig(X/\ekd,W_{n+1}^\vee)\cong \ekd$ as required.

To show the second, note that by construction the exact sequence
\[ 0\rightarrow x^*W_n^\vee \rightarrow x^*W_{n+1}^\vee \rightarrow H^1_\rig(X/\ekd,W_n^\vee) \rightarrow 0\]
splits after pushing out via $x^*W_n^\vee\rightarrow \ekd$. This splitting induces a map $x^*W_{n+1}^\vee\rightarrow \ekd$ which by an easy diagram chase is seen to satisfy the required conditions.
\end{proof}

The following then summarises the main result of this section.

\begin{corollary} There is a natural isomorphism $\omega(\cur{A}^{\rig,\vee}_{\infty,x})\cong A^{\rig,\vee}_{\infty,x}$ of Hopf algebras over $\ekd$, inducing the structure of a non-abelian $\pn$-module on $\pi_1^\rig(X/\ekd,x)$.
\end{corollary}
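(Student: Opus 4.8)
The plan is to deduce this directly from Theorem \ref{bc} by unwinding the definitions of the objects involved. Recall that $G^\rig_{X,x}$ is by construction an affine group scheme over the Tannakian category $\pnm{\ekd}$, so that its coordinate ring $\cur{A}^{\rig,\vee}_{\infty,x}$ is a Hopf algebra object in $\mathrm{Ind}(\pnm{\ekd})$, and that the associated affine group scheme $\omega(G^\rig_{X,x})$ over $\ekd$ is by definition the spectrum of $\omega(\cur{A}^{\rig,\vee}_{\infty,x})$. Since $\omega:\pnm{\ekd}\rightarrow \mathrm{Vec}_{\ekd}$ is a fibre functor, hence exact and symmetric monoidal, it carries Hopf algebra objects to Hopf algebras, so $\omega(\cur{A}^{\rig,\vee}_{\infty,x})$ is genuinely a commutative Hopf $\ekd$-algebra.

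First I would invoke Theorem \ref{bc}, which provides an isomorphism $\pi_1^\rig(X/\ekd,x)\isomto \omega(G^\rig_{X,x})$ of affine group schemes over $\ekd$. Applying the anti-equivalence between affine group schemes over $\ekd$ and commutative Hopf $\ekd$-algebras, this isomorphism of group schemes corresponds to an isomorphism
\[ \omega(\cur{A}^{\rig,\vee}_{\infty,x}) \isomto \mathcal{O}(\pi_1^\rig(X/\ekd,x)) = A^{\rig,\vee}_{\infty,x} \]
of Hopf algebras over $\ekd$, which is the asserted identification. Naturality in $X$ and $x$ is inherited from the functoriality of the frame-independent constructions $H^i_\rig(X/\ekd,-)$, of the universal objects $U_n$ and $W_n$, and of the map of fundamental groups, together with the naturality built into the proof of Theorem \ref{bc}.

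Finally, I would observe that this is precisely the data required by the definition of a $\pn$-module structure on $\pi_1^\rig(X/\ekd,x)$: the Hopf algebra $\cur{A}^{\rig,\vee}_{\infty,x}$ in $\pnm{\ekd}$ has, via $\omega$, underlying Hopf $\ekd$-algebra canonically identified with $A^{\rig,\vee}_{\infty,x}$. There is essentially no further obstacle, since all the substance has been absorbed into Theorem \ref{bc}; the only point requiring (routine) care is checking that the isomorphism of group schemes supplied there dualises compatibly with the enriched Hopf structure, i.e. that $\omega$ applied to the comultiplication, counit and antipode of $G^\rig_{X,x}$ recovers the corresponding structure maps of $A^{\rig,\vee}_{\infty,x}$. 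This is automatic, as the isomorphism of Theorem \ref{bc} is constructed as a morphism of affine group schemes and hence respects all the Hopf-theoretic structure.
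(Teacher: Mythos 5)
Your proposal is correct and is essentially the paper's own (implicit) argument: the paper states this corollary as an immediate consequence of Theorem \ref{bc}, obtained exactly as you describe by passing through the anti-equivalence between affine group schemes over $\ekd$ and commutative Hopf $\ekd$-algebras, using the fact (noted earlier in the paper) that the fibre functor $\omega$ carries Hopf algebras in $\pnm{\ekd}$ to Hopf algebras over $\ekd$. Your final observation that the identification automatically respects the Hopf structure, since the map in Theorem \ref{bc} is a morphism of affine group schemes, is the right way to dispose of the only potential point of care.
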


No confusion will therefore arise if I denote $G^\rig_{X,x}$ by $\pi_1^\rig(X/\ekd,x)$ from now on, i.e. $\pi_1^\rig(X/\ekd,x)$ will be considered with its canonical $\pn$-module structure. At this point perhaps a few sanity checks are in order, the first two are relatively straightforward.

\begin{proposition} Let $X/F$ be geometrically connected, and $x\in X(F)$. There is a natural isomorphism
\[ \pi_1^\rig(X/\ekd,x) \otimes_{\ekd} \ek \cong \pi_1^\rig(X/\ek,x) \]
of non-abelian $\pn$-modules 
\end{proposition}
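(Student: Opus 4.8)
The plan is to deduce this from a comparison of the underlying Tannakian categories, together with the observation that the $\pn$-module structures on both sides are manufactured in a base-change-compatible way. By Tannakian reconstruction it suffices to produce an equivalence
\[ \cur{N}\isoc{X/\ekd}_{\ek}\isomto \cur{N}\isoc{X/\ek} \]
between the scalar extension of $\cur{N}\isoc{X/\ekd}$ from $\ekd$ to $\ek$ and the category $\cur{N}\isoc{X/\ek}$, compatible with the fibre functors $x^*$. Indeed, applying Proposition \ref{se} with $k=\ekd$, $L=\ek$ and the fibre functor $x^*(-)\otimes_{\ekd}\ek$ identifies the left hand category with $\mathrm{Rep}_{\ek}(\pi_1^\rig(X/\ekd,x)\otimes_{\ekd}\ek)$, while $\cur{N}\isoc{X/\ek}\cong\mathrm{Rep}_{\ek}(\pi_1^\rig(X/\ek,x))$, so such an equivalence is precisely the desired isomorphism of group schemes. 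The completion functor $\cur{N}\isoc{X/\ekd}\to\cur{N}\isoc{X/\ek}$, $E\mapsto E\otimes_{\ekd}\ek$, is a faithful exact tensor functor (faithfulness and exactness being the unipotent, glued version of Lemma \ref{ef}) commuting with $x^*$, so by the universal property of scalar extension it factors through an $\ek$-linear functor $\cur{N}\isoc{X/\ekd}_{\ek}\to\cur{N}\isoc{X/\ek}$, and it is this functor I claim is an equivalence.

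First I would verify the equivalence using the criterion of \cite[Appendix A]{EHS08}, exactly as in the proof of Theorem \ref{bc}, which reduces matters to three claims: that an object of $\cur{N}\isoc{X/\ekd}_{\ek}$ whose image in $\cur{N}\isoc{X/\ek}$ is constant is itself of the form $M\otimes\mathcal{O}$ with $M\in\mathrm{Vec}_{\ekd}\otimes\ek$; that for any object the largest constant sub-isocrystal over $\ek$ lifts to a subobject over $\cur{N}\isoc{X/\ekd}_{\ek}$; and that every object of $\cur{N}\isoc{X/\ek}$ is a quotient of one coming from $\cur{N}\isoc{X/\ekd}_{\ek}$. The first two are formal consequences of the identification of constant sub/quotient objects via the adjunction map $H^0_\rig(X/\ekd,E)\otimes\mathcal{O}_{X/K}^\dagger\to E$, together with the base change isomorphism $H^0_\rig(X/\ekd,E)\otimes_{\ekd}\ek\cong H^0_\rig(X/\ek,E\otimes\ek)$ of \cite{LP16}; the argument is word-for-word the one given for the analogous claims in the proof of Theorem \ref{bc}.

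The substantive point is the third claim, for which I would again invoke the universal unipotent objects. The objects $U_n\in\isoc{X/\ekd}$ from the proof of Theorem \ref{bc} are built recursively out of the cohomology groups $H^1_\rig(X/\ekd,U_n^\vee)$ and the canonical extension classes these carry. Since rigid cohomology base changes, i.e. $H^i_\rig(X/\ekd,E)\otimes_{\ekd}\ek\cong H^i_\rig(X/\ek,E\otimes\ek)$ by \cite{LP16}, an induction on $n$ (using Kedlaya's full faithfulness theorem to pin down the relevant morphisms) shows that $U_n\otimes_{\ekd}\ek$ is canonically the analogous universal object over $\ek$, and in particular retains its universal property: any object of $\cur{N}\isoc{X/\ek}$ of unipotence degree $\leq n$ equipped with an element of its fibre at $x$ receives a unique map from $U_n\otimes_{\ekd}\ek$. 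Hence every unipotent object over $\ek$ is a quotient of some $U_n\otimes_{\ekd}\ek$, which establishes the third claim and with it the equivalence of categories, hence the isomorphism of underlying group schemes over $\ek$.

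It remains to upgrade this to an isomorphism of non-abelian $\pn$-modules. This is automatic from the construction: the $\pn$-module structures on both sides are produced by the identical relative-Tannakian procedure of Theorem \ref{bc} (with $\ekd$ replaced by $\ek$), whose inputs—the universal objects, the categories $\fisoc{X/K}$, and the absolute Frobenius cohomology of \S\ref{safc}—are all compatible with the base change $\ekd\to\ek$. I expect the main obstacle to be essentially bookkeeping, namely checking that the base change of the objects $W_n$ is compatible with their Frobenius and connection data, equivalently that the extension classes defining $W_{n+1}$ in the proof of Theorem \ref{bc} are carried to the corresponding classes over $\ek$ under the cohomological base change maps. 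As all the requisite base change statements are already available from \cite{LP16}, this amounts to careful tracking of structures rather than any genuine new difficulty.
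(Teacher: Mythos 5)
Your proposal is correct in substance and shares the paper's opening reduction, but verifies the key equivalence by a genuinely different, heavier route. The paper's own proof is very short: it constructs the comparison map directly as a map of non-abelian $\pn$-modules (it arises from the quasi-completion functor $E\mapsto\hat E$, so $\pn$-compatibility is automatic from the outset), reduces to showing that $\cur{N}\isoc{X/\ekd}\otimes_{\ekd}\ek\rightarrow\cur{N}\isoc{X/\ek}$ is an equivalence after forgetting all structure, and then observes that for unipotent categories full faithfulness is \emph{equivalent} to the $H^0$ base change isomorphism and essential surjectivity to the $H^1$ base change isomorphism; both follow from the constant-coefficient case (the main theorem of \cite{LP16}) by the five lemma. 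You instead re-run the whole Theorem \ref{bc} machinery: the \cite{EHS08} three-condition criterion, the universal unipotent objects $U_n$, and an induction showing $U_n\otimes_{\ekd}\ek$ remains universal. This works --- your three conditions rest on exactly the same cohomological inputs ($H^0$ for the first two, $H^1$ via the recursive construction of the $U_n$ for the third) --- but it is considerably longer than necessary, since here there is no nontrivial ``base'' category and the relative Tannakian apparatus degenerates; the paper's direct cohomological characterisation buys a six-line proof, while your route has the minor virtue of making the comparison of universal objects explicit. Two small cautions: your appeal to ``Kedlaya's full faithfulness theorem'' to pin down morphisms between the $U_n$ is misplaced (uniqueness comes from the universal property itself; Kedlaya's theorem concerns $\pn$-modules over $\ekd\subset\rk$); and your final ``upgrade'' step is stated in the wrong order --- rather than building the isomorphism of underlying group schemes and then arguing by bookkeeping that it respects the $\pn$-structures, it is cleaner (and is what the paper does) to note first that a canonical $\pn$-equivariant morphism exists, after which checking it is an isomorphism may legitimately be done with all structures forgotten. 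As written that last paragraph is an expectation rather than an argument, though it is repairable along the lines you indicate.
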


\begin{remark} Actually, I haven't described the $\pn$-module structure on $\pi_1^\rig(X/\ek,x)$, but it is constructed in essentially the same way as that on $\pi_1^\rig(X/\ekd,x)$. 
\end{remark}

\begin{proof} There is a naturally defined base change map
\[  \pi_1^\rig(X/\ek,x)\rightarrow \pi_1^\rig(X/\ekd,x) \otimes_{\ekd} \ek \] 
(coming from the quasi-completion functor $E\mapsto \hat{E}$ on isocrystals) and to show it is an isomorphism it suffices to do so after forgetting the connection and Frobenius, in other words to show that the functor
\[ \mathcal{N}\isoc{X/\ekd}\otimes_{\ekd} \ek \rightarrow \mathcal{N}\isoc{X/\ek} \]
is an equivalence of categories. But this can be expressed purely in terms of cohomology: to show that it is fully faithful amounts to showing that 
\[ H^0_\rig(X/\ekd,E)\otimes_{\ekd} \ek \rightarrow H^0_\rig(X/\ek,\hat{E}) \]
is an isomorphism for all $E\in \mathcal{N}\isoc{X/\ekd}$, and to show that it is essentially surjective amounts to showing that 
\[ H^1_\rig(X/\ekd,E)\otimes_{\ekd} \ek \rightarrow H^1_\rig(X/\ek,\hat{E}) \]
is an isomorphism for all $E\in \mathcal{N}\isoc{X/\ekd}$. For $E=\mathcal{O}_{X/\ekd}^\dagger$ this is essentially the main result of \cite{LP16}, and the general case then follows from the five lemma.
\end{proof}

\begin{proposition} Let $X/F$ be geometrically connected, and $x\in X(F)$. Then there is an isomorphism
\[ \pi_1^\rig(X/\ekd,x)^\mathrm{ab} \cong H^1_\rig(X/\ekd)^\vee \]
of $\pn$-modules over $\ekd$.
\end{proposition}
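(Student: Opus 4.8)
The plan is to exploit the general principle that for a pro-unipotent affine group scheme $G$ over a field of characteristic zero, the abelianization $G^\mathrm{ab}$ is a (pro-)vector group, hence is canonically recovered from its Lie algebra $\mathrm{Lie}(G)^\mathrm{ab}=\mathrm{Lie}(G)/[\mathrm{Lie}(G),\mathrm{Lie}(G)]$, or dually from the space of primitives of $\mathcal{O}(G)$. The relevant identification is the standard one
\[ \mathrm{Hom}(G,\mathbb{G}_a) \cong \mathrm{Ext}^1_{\mathrm{Rep}(G)}(\mathbf{1},\mathbf{1}), \]
a homomorphism $G\to\mathbb{G}_a$ being the same datum as a two-step unipotent representation, i.e. a self-extension of the unit object. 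Applying this to $G=\pi_1^\rig(X/\ekd,x)$ and transporting along the Tannakian equivalence $\mathrm{Rep}_{\ekd}(\pi_1^\rig(X/\ekd,x))\cong \cur{N}\mathrm{Isoc}^\dagger(X/\ekd)$, the right-hand side becomes $\mathrm{Ext}^1_{\cur{N}\mathrm{Isoc}^\dagger(X/\ekd)}(\mathcal{O}^\dagger_{X/\ekd},\mathcal{O}^\dagger_{X/\ekd})$. Since any such extension is automatically unipotent, this agrees with the Ext group in the full category $\mathrm{Isoc}^\dagger(X/\ekd)$, which by the comparison between extensions and rigid cohomology established above equals $H^1_\rig(X/\ekd)$. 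Dualizing the resulting identification $(\pi_1^\rig(X/\ekd,x)^\mathrm{ab})^\vee\cong H^1_\rig(X/\ekd)$ yields the isomorphism of the proposition at the level of group schemes over $\ekd$.

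To promote this to an isomorphism of $\pn$-modules I would work throughout in the relative/absolute setting of \S\ref{maini}, reading off the abelianization directly from the universal objects of the proof of Theorem \ref{bc}. There $W_1=\mathcal{O}^\dagger_{X/K}$ and $W_2$ is built as the universal extension
\[ 0 \rightarrow H^1_\rig(X/\ekd)^\vee\otimes\mathcal{O}^\dagger_{X/K} \rightarrow W_2 \rightarrow \mathcal{O}^\dagger_{X/K}\rightarrow 0 \]
in $\cur{N}_\mathrm{rel}\fisoc{X/K}$, whose class corresponds to the identity in $\mathrm{End}_{\pnm{\ekd}}(H^1_\rig(X/\ekd))$. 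Evaluating at $x$ and passing to representations, $W_2$ is precisely the universal two-step unipotent object, so the homomorphism $\pi_1^\rig(X/\ekd,x)^\mathrm{ab}\to H^1_\rig(X/\ekd)^\vee$ classifying it is an isomorphism; and because the entire construction takes place in the $\pn$-enriched category $\cur{N}_\mathrm{rel}\fisoc{X/K}$, with the kernel already appearing as the $\pn$-module $H^1_\rig(X/\ekd)^\vee$ via the projection formula of Proposition \ref{proj}, this isomorphism automatically respects connections and Frobenii. This is the same mechanism by which Theorem \ref{bc} equips $\pi_1^\rig(X/\ekd,x)$ with its $\pn$-module structure in the first place.

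The genuinely delicate point, and hence the step I would be most careful about, is exactly this compatibility with the $\pn$-module structure: the bare vector-space isomorphism is elementary, but one must verify that the connection and Frobenius transported from the cotangent space of the abelianization coincide with those on $H^1_\rig(X/\ekd)$ coming from the cohomological construction. I would establish this by checking that the identification of $\mathrm{Hom}(G,\mathbb{G}_a)$ with $\mathrm{Ext}^1$ of the unit is functorial for the relative Tannakian formalism of \S\ref{fgtc}, so that it upgrades to an identification of the cotangent $\pn$-module of $G^\mathrm{ab}$ with the enriched extension group $\mathrm{Ext}^1_{\fisoc{X/K}}(\mathcal{O}^\dagger_{X/K},\mathcal{O}^\dagger_{X/K})$; by the discussion following Proposition \ref{leray} this enriched Ext is computed by absolute Frobenius cohomology and, through the projection formula, carries exactly the $\pn$-module $H^1_\rig(X/\ekd)$. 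The remaining verifications, namely that primitives of the enriched Hopf algebra compute this Ext group and that higher terms of the lower central series contribute nothing to the abelianization, are formal given the machinery already in place.
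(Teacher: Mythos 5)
Your proposal is correct, but it follows a genuinely different (and much more self-contained) route than the paper. The paper's proof is two sentences of citations: the isomorphism of underlying affine group schemes over $\ekd$ is quoted from \cite[Corollary 1.1.10]{Shi00}, and the upgrade to $\pn$-module structures is delegated to the enriched Tannakian formalism of \cite[Proposition 2.3]{Laz15}, the enriched analogue of \cite[Corollary 1.1.8]{Shi00}. You instead reprove both halves directly: the unenriched statement via $\mathrm{Hom}(G,\G_a)\cong \mathrm{Ext}^1_{\mathrm{Rep}(G)}(\mathbf{1},\mathbf{1})$ together with the corollary to Proposition \ref{exts} (which is essentially how Shiho's result is proved anyway), and the enrichment by reading the abelianisation off the universal two-step object $W_2$ from the proof of Theorem \ref{bc}. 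Your second paragraph is the real content, and it works: since $W_2\in \cur{N}_\mathrm{rel}\fisoc{X/K}$, the fibre $x^*W_2$ is a representation of $\pi_1^\rig(X/\ekd,x)$ internal to $\pnm{\ekd}$, two-step with graded pieces $\ekd$ and $H^1_\rig(X/\ekd)^\vee$, on which the group acts trivially because both are pulled back along $-\otimes\mathcal{O}_{X/K}^\dagger$ and the relative fundamental group is by construction the kernel of the map to $\pi(\pnm{\ekd})$; the classifying map $\pi_1^\rig(X/\ekd,x)^\mathrm{ab}\rightarrow H^1_\rig(X/\ekd)^\vee$ is therefore a morphism of vector groups in $\pnm{\ekd}$, and since the forgetful functor to $\mathrm{Vec}_{\ekd}$ reflects isomorphisms (the inverse of a horizontal, Frobenius-equivariant bijection is automatically horizontal and Frobenius-equivariant), your unenriched computation finishes the proof. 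What your route buys is an explicit description of where the connection and Frobenius on the abelianisation come from; what the paper's buys is brevity.

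One imprecision in your final paragraph should be corrected, though it does not damage the argument since your second paragraph already suffices. The group $\mathrm{Ext}^1_{\fisoc{X/K}}(\mathcal{O}_{X/K}^\dagger,\mathcal{O}_{X/K}^\dagger)$ is a $K^\sigma$-vector space, computed by absolute Frobenius cohomology $H^1_F(X/K,\mathcal{O}_{X/K}^\dagger)$, and by the Leray sequence of Proposition \ref{leray} it is an extension of a subspace of $H^0_F(H^1_\rig(X/\ekd))$ by $H^1_F(H^0_\rig(X/\ekd))$; it is not, and cannot be, identified with the $\pn$-module $H^1_\rig(X/\ekd)$, nor with the cotangent $\pn$-module of the abelianisation. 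The $\pn$-module structure enters instead through the \emph{relative} term $\mathrm{End}_{\pnm{\ekd}}(H^1_\rig(X/\ekd))$ in the five-term sequence (equivalently, through the projection formula of Proposition \ref{proj} applied to the kernel of $W_2$), which is exactly the mechanism of your second paragraph; the third paragraph's alternative verification should therefore be dropped or rephrased accordingly.
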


\begin{remark}  Here $\pi_1^\rig(X/\ekd,x)^\mathrm{ab}$ refers to the abelianisation of $\pi_1^\rig(X/\ekd,x)$, \emph{a priori} it is a (possibly infinite dimensional) vector group scheme in the category of $\pn$-modules over $\ekd$. By  \cite[Corollary 1.1.10]{Shi00} together with Proposition \ref{exts} and the main finiteness results of \cite{LP16} it is in fact finite dimensional, and hence the vector group scheme associated to some finite dimensional, abelian $\pn$-module over $\ekd$ (i.e. an object of $\pnm{\ekd}$).
\end{remark}

\begin{proof} By \cite[Corollary 1.1.10]{Shi00} the claim holds after forgetting $\pn$-module structures. Enriching this to include $\pn$-module structures simply follows from the `enriched' version of \cite[Corollary 1.1.8]{Shi00}, which is \cite[Proposition 2.3]{Laz15}. 
\end{proof}

The last comparison I want to show will be a touch more involved, and will be the subject of the next section.

\section{Comparison with the global situation}\label{compglob}

In this section I want to compare non-abelian $\pn$-module $\pi_1^\rig(X/\ekd,x)$ with the `relative fundamental group' of a smooth and proper family constructed in \cite{Laz15}. So let $C$ be a smooth, geometrically connected curve over $k$, and $f:\mathcal{X}\rightarrow C$ a smooth and proper morphism with geometrically connected fibres. Fix a $k$-rational point $c\in C(k)$ and an isomorphism $F\cong  \widehat{k(C)}_c$ between $F$ and the completion of the function field of $C$ at $c$, let $X=\mathcal{X}\times_C F$ be the corresponding smooth, proper, geometrically connected variety over $F$. Suppose that there is given a section $s:C\rightarrow \mathcal{X}$ of $f$, and let $x\in X(F)$ be the corresponding point induced by pull-back.

In this situation, I constructed in \cite{Laz15} a `non-abelian' overconvergent $F$-isocrystal $\pi_1^\rig(\mathcal{X}/C,s)\in \fisoc{C/K}$ whose fibre at every \emph{closed} point $c'\in C$ is the unipotent rigid fundamental group of $\mathscr{X}_{c'}$. This fundamental group is constructed as the `relative' fundamental group associated to the pair of functors 
\[ s^*:\cur{N}_f\fisoc{\cur{X}/K} \leftrightarrows \fisoc{C/K} :f^*  \]
where $\cur{N}_f\fisoc{\cur{X}/K}$ is the category of relatively unipotent $F$-isocrystals on $\cur{X}$. 

In \cite[\S6]{Tsu98}, Tsuzuki constructs a functor
\[ i^*_c: \fisoc{C/K}\rightarrow \pnm{\ekd} \]
by pulling back to a punctured formal neighbourhood of the given point $c\in C(k)$, and the main result of this section is the following comparison theorem.

\begin{theorem}\label{bc2} There is a canonical isomorphism
\[ i^*_c \left(\pi_1^\rig(\mathcal{X}/C,s)\right) \cong \pi_1^\rig(X/\ekd,x)\]
of non-abelian $\pn$-modules over $\ekd$. 
\end{theorem}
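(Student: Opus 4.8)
The plan is to exhibit both sides as relative fundamental groups of compatible Tannakian data and then to transport the comparison through the functoriality of the construction in Section \ref{fgtc}, checking that the resulting morphism is an isomorphism after applying the forgetful fibre functor.

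First I would produce a restriction functor $j^*\colon \cur{N}_f\fisoc{\cur{X}/K}\rightarrow \cur{N}_\mathrm{rel}\fisoc{X/K}$ obtained by pulling an $F$-isocrystal on $\mathcal{X}$ back along the map from the punctured formal neighbourhood of $c$ into $\mathcal{X}$; this is to $f\colon\mathcal{X}\rightarrow C$ exactly what $i^*_c$ is to $C$, and since the restriction of the family to that neighbourhood is $X=\mathcal{X}\times_C F$, the functor $j^*$ preserves relative unipotence (the associated graded being pulled back from the base). The key compatibilities to verify are that the squares with the structure functors commute up to natural isomorphism, namely $i^*_c\circ s^*\cong x^*\circ j^*$ and $j^*\circ f^*\cong (-\otimes\mathcal{O}_{X/K}^\dagger)\circ i^*_c$. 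These say precisely that $(i^*_c,j^*)$ is a morphism from the global relative datum $s^*\colon\cur{N}_f\fisoc{\cur{X}/K}\leftrightarrows\fisoc{C/K}\colon f^*$ to the local one $x^*\colon\cur{N}_\mathrm{rel}\fisoc{X/K}\leftrightarrows\pnm{\ekd}\colon-\otimes\mathcal{O}_{X/K}^\dagger$.

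Applying the functoriality of the relative fundamental group to this morphism of data then produces a canonical homomorphism $i^*_c(\pi_1^\rig(\mathcal{X}/C,s))\rightarrow \pi_1^\rig(X/\ekd,x)$ of affine group schemes over $\pnm{\ekd}$, equivalently a morphism of the corresponding Hopf algebras. Since the forgetful fibre functor $\omega\colon\pnm{\ekd}\rightarrow\mathrm{Vec}_{\ekd}$ is faithful and exact it carries Hopf algebras to Hopf algebras and detects isomorphisms, so it suffices to prove that the underlying morphism of affine group schemes over $\ekd$ is an isomorphism; by Tannakian duality this amounts to showing that, after forgetting Frobenius, $j^*$ induces an equivalence between the relevant category of relatively unipotent isocrystals on $\mathcal{X}$ restricted to $c$ and $\cur{N}\isoc{X/\ekd}$, compatibly with the fibre functors $s^*$ and $x^*$.

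I would establish this by comparing the universal unipotent objects on the two sides. The group $\pi_1^\rig(X/\ekd,x)$ is pro-represented by the system $\{U_n\}$ of Section \ref{maini}, built recursively as successive extensions controlled by $H^1_\rig(X/\ekd,U_n^\vee)$, and the relative fundamental group of \cite{Laz15} is built from an entirely analogous system $\{\mathcal{U}_n\}$ of universal relatively unipotent objects over $C$. Because both systems are produced by the same recursive procedure, the identity $j^*\mathcal{U}_n\cong U_n$ follows by induction once one knows that $i^*_c$ commutes with the formation of the controlling cohomology and carries the extension class defining $\mathcal{U}_{n+1}$ to that defining $U_{n+1}$. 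The heart of the matter, and the step I expect to be the main obstacle, is the underlying cohomological base change statement: that restriction to the boundary point $c$ commutes with relative rigid cohomology, i.e. $i^*_c(\mathbf{R}^if_{\mathrm{rig}*}E)\cong H^i_\rig(X/\ekd,j^*E)$ compatibly with connection and Frobenius. This is where Tsuzuki's analysis of the restriction functor $i^*_c$ from \cite{Tsu98} and the comparison of the generic fibre $X/\ekd$ with its rigid-analytic punctured neighbourhood in $\mathcal{X}$ enter; once it is in place, the inductive matching $j^*\mathcal{U}_n\cong U_n$, and hence the identification of the two fundamental groups, is formal.
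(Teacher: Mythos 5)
Your proposal follows essentially the same route as the paper's proof: the same commutative square of functors relating $\cur{N}_f\fisoc{\cur{X}/K}$ and $\cur{N}_\mathrm{rel}\fisoc{X/K}$, the canonical homomorphism obtained from the functoriality of \S\ref{fgtc}, reduction to the underlying affine group schemes over $\ekd$ via the forgetful fibre functor and Theorem \ref{bc}, and the base change isomorphism $i_c^*(\mathbf{R}^if_*E)\cong H^i_\rig(X/\ekd,i_c^*E)$ as the decisive input, which you correctly single out. A few points of comparison are worth recording. First, with the paper's conventions the canonical map runs in the other direction, $\pi_1^\rig(X/\ekd,x)\rightarrow i_c^*\left(\pi_1^\rig(\mathcal{X}/C,s)\right)$, since a tensor functor $\eta:\cur{T}\rightarrow\cur{T}'$ induces $\pi(\cur{T}')\rightarrow\eta(\pi(\cur{T}))$; this is harmless for the statement but matters for setting up the verification. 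Second, the paper does not prove that $i_c^*$ induces an equivalence of categories (the two sides are linear over different fields, so this is not quite the right target); instead it verifies the three criteria of \cite[Appendix A]{EHS08}, of which your universal-object step handles only the third. The other two --- that an object of $\cur{N}_f\fisoc{\cur{X}/K}$ whose restriction is constant is pulled back from $C$, and that the maximal constant subobject of the restriction descends --- are proved via the adjunction $f^*f_*E\rightarrow E$ together with base change, and your sketch omits them. That said, your alternative of identifying the Hopf algebras directly by matching the global universal objects of \cite{Laz15} with the local $U_n$ under $i_c^*$ can be made to work (it is exactly the shape of the paper's later argument for Theorem \ref{theo: dst}, following \cite{Had11}), and it would spare you re-running the strengthened induction the paper carries out in Theorem \ref{cgt}, where the objects $W_n$ are rebuilt with the auxiliary hypotheses $f_*W_n^\vee\cong\cur{O}_{C/K}^\dagger$ and a splitting $s^*W_n^\vee\rightarrow\cur{O}_{C/K}^\dagger$ used to kill the obstruction in the five-term Leray sequence; the price is that you must check your matching respects the base points and the Hopf structure maps and agrees with the canonical homomorphism. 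Finally, the base change isomorphism is less of an obstacle than you anticipate, and Tsuzuki's role is only to supply the functor $i_c^*$: the paper deduces it from the constant-coefficient case of \cite{LP16}, the projection formula for objects pulled back from $C$, and the five lemma for relatively unipotent objects, and it is needed only as an isomorphism of $\ekd$-vector spaces, since the whole verification takes place after applying the forgetful functor.
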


\begin{remark} The proof of this theorem, while not strictly necessary for the results in this article, does provide a model for some base change results that I will prove later on. I hope, then, that the reader will forgive me for somewhat labouring the details here.
\end{remark}

The first step is to extend Tsuzuki's pullback functor to higher dimensions, in particular to produce a pullback functor 
\[ i_c^*: \fisoc{\mathcal{X}/K} \rightarrow \fisoc{X/K}.\]
This is relatively straightforward, for simplicity I will assume that there exists a smooth and proper frame $(\mathcal{X},\mathcal{Y},\mathfrak{P})$ over $\cur{V}$, so that $\isoc{\mathcal{X}/K}$ can be identified with $\mathrm{MIC}^\dagger((\mathcal{X},\mathcal{Y},\mathfrak{P})/K)$. Since $C$ is a curve, there is a smooth and proper frame $(C,\overline{C},\mathfrak{C})$ (again in the classical sense) and after possibly changing $(\mathcal{X},\mathcal{Y},\mathfrak{P})$ I may assume that there exists a smooth and proper morphism of frames
\[ (\mathcal{X},\mathcal{Y},\mathfrak{P})\rightarrow (C,\overline{C},\mathfrak{C}).\]
Now associated to the point $c$, I can construct (exactly as in \cite[\S6]{Tsu98}) a morphism of triples
\[ \left(\spec{F},\spec{R},\spf{\cur{V}\pow{t}} \right)\rightarrow (C,\overline{C},\mathfrak{C})\]
and hence a Catersian diagram
\[ \xymatrix{ (X,Y,\mathfrak{Q}) \ar[r]\ar[d] & (\mathscr{X},\mathscr{Y},\mathfrak{P}) \ar[d] \\ \left(\spec{F},\spec{R},\spf{\cur{V}\pow{t}} \right)\ar[r] &(C,\overline{C},\mathfrak{C})  } \]
where $(X,Y\mathfrak{Q})$ is a smooth and proper frame over $\cur{V}\pow{t}$. It follows that $\isoc{X/K}$ can be identified with $\mathrm{MIC}^\dagger((X,Y,\mathfrak{P})/K)$ and there is a functor $i_c^*: \isoc{\mathcal{X}/K} \rightarrow \isoc{X/K}$ which is simply the pullback functor
\[  \mathrm{MIC}^\dagger((\mathcal{X},\mathcal{Y},\mathfrak{P})/K) \rightarrow \mathrm{MIC}^\dagger((X,Y,\mathfrak{P})/K). \]
Of course, this doesn't depend on any choices and is therefore functorial, in particular it is functorial with respect to Frobenius and therefore induces a functor
\[ i_c^*: \fisoc{\mathcal{X}/K} \rightarrow \fisoc{X/K}. \]
There is therefore a commutative diagram
\[ \xymatrix{ \cur{N}_f\fisoc{\mathcal{X}/K}  \ar@/^1pc/[d]^{s^*}\ar[r]^{i_c^*} & \cur{N}_\mathrm{rel}\fisoc{X/K} \ar@/^1pc/[d]^{x^*} \\   \fisoc{C/K} \ar[r]^{i_c^*}\ar[u]^{f^*} & \pnm{\ekd} \ar[u]^{-\otimes \cur{O}_{X/K}^\dagger}}  \]
and therefore by some general nonsense a homomorphism
\[ \pi_1^\rig(X/\ekd,x)\rightarrow i_c^*\pi_1^\rig(\mathcal{X}/C,s) \]
of affine group schemes over $\pnm{\ekd}$. To check that it is an isomorphism it suffices to do so after applying a fibre functor, I will choose the forgetful functor $\pnm{\ekd}\rightarrow \mathrm{Vec}_{\ekd}$ to vector spaces over $\ekd$. Extend the above commutative diagram to the diagram
\[ \xymatrix{ \cur{N}_f\fisoc{\mathcal{X}/K} \ar[r]^{i_c^*} & \cur{N}_\mathrm{rel}\fisoc{X/K} \ar[r]  & \cur{N}\isoc{X/\ekd}\\   \fisoc{C/K} \ar[r]^{i_c^*}\ar[u]^{f^*} & \pnm{\ekd} \ar[u]_{-\otimes \cur{O}_{X/K}^\dagger}  \ar[r] & \mathrm{Vec}_{\ekd},  \ar[u]_{-\otimes \cur{O}_{X/K}^\dagger}}\]
by Theorem \ref{bc} the right hand square identifies $\pi_1^\rig(X/\ekd,x)$ (without its $\pn$-module structure) with the Tannakian fundamental group of the category $\cur{N}\isoc{X/\ekd}$, and it therefore suffices to prove the corresponding `base change' result for the square 
\[ \xymatrix{ \cur{N}_f\fisoc{\mathcal{X}/K}\ar@/^1pc/[d]^{s^*} \ar[r]^{i_c^*}  & \cur{N}_\mathrm{rel}\isoc{X/\ekd}\ar@/^1pc/[d]^{x^*} \\   \fisoc{C/K} \ar[r]^{i_c^*}\ar[u]^{f^*} &  \mathrm{Vec}_{\ekd}  \ar[u]^{-\otimes \cur{O}_{X/K}^\dagger}}\]
To spell this out then, the bottom arrow is a fibre functor on $\fisoc{C/K}$, and the square induces a homomorphism
\[ \pi_1^\rig(X/\ekd,x)\rightarrow i^*_c(\pi_1^\rig(\mathcal{X}/C,s))\]
of affine group schemes over $\ekd$ (with no $\pn$-module structure). Theorem \ref{bc2} will then follow from the next result.

\begin{theorem}\label{cgt} This induced map $\pi_1^\rig(X/\ekd,x)\rightarrow i_c^*(\pi_1^\rig(\mathcal{X}/C,s))$ is an isomorphism.
\end{theorem}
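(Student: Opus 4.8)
The plan is to prove Theorem \ref{cgt} by the same Tannakian strategy used for Theorem \ref{bc}, the new ingredient being a base change isomorphism identifying the relative cohomology of the family $\mathcal{X}/C$ with the cohomology $H^i_\rig(X/\ekd,-)$ of the fibre, via Tsuzuki's pullback $i_c^*$. By Tannakian duality the homomorphism of group schemes over $\ekd$ is an isomorphism precisely when the induced functor on categories of $\ekd$-representations is an equivalence, and following the proof of Theorem \ref{bc} (using Proposition \ref{se} together with \cite[Appendix A]{EHS08}) this reduces to three claims, analogous to those in Theorem \ref{bc}, about the functor
\[ i_c^*:\cur{N}_f\fisoc{\mathcal{X}/K}\otimes\ekd\rightarrow \cur{N}\isoc{X/\ekd}: \]
that objects becoming trivial over $\ekd$ are pulled back from $\pnm{\ekd}\otimes\ekd$ via $-\otimes\mathcal{O}_{X/K}^\dagger$; that $i_c^*$ respects the largest constant subobject; and that every object of $\cur{N}\isoc{X/\ekd}$ is a quotient of one in the essential image of $i_c^*$. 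As in Theorem \ref{bc}, the first two are formal, following from the adjunction of Proposition \ref{proj} and its relative analogue $s^*\dashv f^*$ over $C$, together with the exactness of the pullback $i_c^*$.

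The content is the third claim, which I would establish by comparing universal objects. Recall the pro-object $\{U_n\}$ of $\isoc{X/\ekd}$ from the proof of Theorem \ref{bc}, with $U_{n+1}$ the extension of $U_n$ by $H^1_\rig(X/\ekd,U_n^\vee)^\vee\otimes\mathcal{O}_{X/K}^\dagger$ classifying the identity, and universal among pointed unipotent isocrystals. In \cite[\S3]{Laz15} an analogous relative pro-object $\{\mathcal{U}_n\}$ is built in $\cur{N}_f\fisoc{\mathcal{X}/K}$ by the identical recipe, with $H^1_\rig(X/\ekd,-)$ replaced by the relative cohomology $\mathbf{R}^1f_*$ and $\mathcal{O}_{X/K}^\dagger$ by $\mathcal{O}_{\mathcal{X}/K}^\dagger$. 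I would prove by induction on $n$ that $i_c^*\mathcal{U}_n\cong U_n$, compatibly with the marked points $u_n$. Granting this, the third claim is immediate: any $E\in\cur{N}\isoc{X/\ekd}$ of unipotence degree $\leq n$, being generated by finitely many elements of its fibre $E_x$, is a quotient of a sum of copies of $U_n\cong i_c^*\mathcal{U}_n$, and $\mathcal{U}_n^{\oplus r}$ lies in $\cur{N}_f\fisoc{\mathcal{X}/K}$.

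The inductive step requires that $i_c^*$ commute with the formation of the classifying extension, which reduces to a base change isomorphism
\[ i_c^*\left(\mathbf{R}^1f_*(\mathcal{U}_n^\vee)\right)\cong H^1_\rig\left(X/\ekd,(i_c^*\mathcal{U}_n)^\vee\right), \]
compatible with the Ext-pairings through which the extensions are defined (Proposition \ref{exts}). This base change statement is the main obstacle. I would prove it using the Cartesian diagram of smooth and proper frames constructed above: relative rigid cohomology is computed as relative de Rham cohomology of the tubes along the fibres of $\mathfrak{P}_K\rightarrow\mathfrak{C}_K$, while $i_c^*$ is realised geometrically by restriction to the tube over the punctured formal neighbourhood of $c$, that is over $\spa{\ek,\cur{O}_{\ek}}$. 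The comparison then amounts to the compatibility of this relative de Rham cohomology with the base change $\spa{\ek,\cur{O}_{\ek}}\rightarrow\spa{\cur{E}_m,\cur{O}_{\cur{E}_m}}$, which I would deduce from exactly the flat base change isolated in the proof of Lemma \ref{ef} — the left exactness of the completed tensor product $-\widehat{\otimes}_{\cur{O}_{\cur{E}_m}}\cur{O}_{\ek}$ together with the flatness of $\cur{O}_{\cur{E}_m}/p^n\rightarrow\cur{O}_{\ek}/p^n$ — now applied fibrewise over $\mathfrak{C}_K$ to the relative cohomology. Since only the underlying $\ekd$-modules enter Theorem \ref{cgt}, no compatibility with Frobenius is needed here; this is supplied separately to upgrade the isomorphism to one of $\pn$-modules in Theorem \ref{bc2}.
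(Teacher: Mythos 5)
Your overall architecture coincides with the paper's: reduce via \cite[Theorem 24.1]{Str07}/\cite[Appendix A]{EHS08} to the three claims, extend the universal objects $U_n$ to relatively unipotent $F$-isocrystals on $\cur{X}$ (your $\cur{U}_n$ are the paper's $W_n$), and recognise that everything hinges on the base change isomorphism $i_c^*(\mathbf{R}^if_*E)\cong H^i_\rig(X/\ekd,i_c^*E)$ --- the paper itself calls this ``the all-important base change map''. The genuine gap is in how you propose to prove it. Your fibrewise flat base change argument, modelled on Lemma \ref{ef}, does not deliver this statement, for two reasons. First, the flatness of $\cur{O}_{\cur{E}_m}/p^n\rightarrow\cur{O}_{\ek}/p^n$ and the left exactness of $-\widehat{\otimes}_{\cur{O}_{\cur{E}_m}}\cur{O}_{\ek}$ concern extension to the \emph{completed} ring $\ek$; at best they would address a comparison with $H^i_\rig(X/\ek,-)$, whereas the statement needed is over $\ekd$, i.e.\ involves the colimit over strict neighbourhoods $V_{n,m}$ (over the rings $\cur{E}_m$ with $m\to\infty$), and commuting that colimit with cohomology is precisely the hard content of \cite{LP16}, not a flatness statement. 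Second, exactness of a scalar-extension functor on coherent modules (which is all Lemma \ref{ef} provides, and all it is used for --- exactness and faithfulness of restriction functors) is much weaker than commutation of \emph{higher} relative de~Rham cohomology with a non-finite change of base ring; for that one needs finiteness/coherence of the relative cohomology, which is exactly what cannot be assumed here. The paper avoids reproving any of this: it quotes the constant-coefficient case from \cite{LP16}, deduces the case of objects pulled back from $C$ by the projection formula, and bootstraps to all relatively unipotent coefficients by the five lemma along the unipotent filtration. Substituting this d\'evissage for your fibrewise argument repairs the proof with the rest of your structure intact.

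Two smaller inaccuracies. Claims (1) and (2) are not purely formal consequences of adjunction and exactness of $i_c^*$: the paper uses the degree-zero instance of the same base change isomorphism to identify $i_c^*$ of the adjunction map $f^*f_*E\rightarrow E$ with the fibrewise adjunction $H^0_\rig(X/\ekd,i_c^*E)\otimes\cur{O}_{X/\ekd}^\dagger\rightarrow i_c^*E$, and then invokes rigidity to conclude that a morphism in $\cur{N}_f\fisoc{\cur{X}/K}$ is an isomorphism because its restriction to the single formal fibre at $c$ is one --- a non-trivial conservativity statement you should not elide. Also, the relevant adjunction over $C$ is $f^*\dashv f_*$ (the relative analogue of Proposition \ref{proj}), not ``$s^*\dashv f^*$'': the section only gives $s^*f^*\cong\mathrm{id}$, which is what makes the relative Tannakian formalism of \S\ref{fgtc} applicable, but it is not an adjunction.
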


\begin{remark} The proof of this result is essentially \emph{exactly} the same as the proof of Theorem \ref{bc}, with one crucial extra detail. In that proof, the cohomology groups $H^i_\rig(X/\ekd,E)$ in fact played two subtly different roles, one with $E\in \fisoc{X/K}$, where they come with an extra $\pn$-module structure, and one with $E\in \isoc{X/\ekd}$ in which case they are simply vector spaces over $\ekd$. 

The point is that in carrying out the `same' proof here, these two roles need to be separated more clearly.  In the first case, the $\pn$-module $ H^i_\rig(X/\ekd,E)$ associated to some $E\in \fisoc{X/K}$ will be replaced by the higher direct image sheaves $\mathbf{R}^if_* E$ associated to some $E\in \fisoc{\cur{X}/K}$, that these are in fact overconvergent $F$-isocrystals on $C/K$ is proved, for example, in \cite[\S3]{Laz15}. In the second case nothing changes.

The fact that $H^i_\rig(X/\ekd,E)$ plays two different roles is essentially a form of base change, and now this becomes a non-trivial result I have to prove: that given an overconvergent $F$-isocrystal $E\in \fisoc{\cur{X}/K}$, there is a `base change' isomorphism
\[ i_c^*(\mathbf{R}^if_*E) \cong H^i_\rig(X/\ekd,i_c^*E)\]
of vector spaces over $\ekd$. This was proved in \cite{LP16} for constant coefficients, follows by the projection formula for anything pulled back from $C$, and for everything relatively unipotent by the five lemma.

 With these caveats in place, I will now proceed to copy out word for word the proof of Theorem \ref{bc}. As explained above, the reason for going through this in such tortuous detail is that it will provide the template for other base change results still to come. The slight confusion of these roles played by $H^i_\rig(X/\ekd,E)$ in the proof of Theorem \ref{bc} as discussed above means that it is not completely ideal to have this as the only fully worked out explanation of the method. Of course, if I were cleverer I would simply axiomatise the situation.
\end{remark}

\begin{proof} As before, it suffices to prove the following three claims:
\begin{enumerate} \item if $E \in \cur{N}_fF\text{-}\mathrm{Isoc}^\dagger(\cur{X}/K)$ is such that $i_c^*E$ is constant, then $E\cong f^*M$ for some $M\in \fisoc{C/K} $;
\item if $E \in  \cur{N}_fF\text{-}\mathrm{Isoc}^\dagger(\cur{X}/K) $ and $E_0'\subset i_c^*E$  in $\cur{N}\mathrm{Isoc}^\dagger(X/\ekd)$ is the largest constant sub-isocrystal over $\ekd$, then there exists some $E'\subset E$ in $\cur{N}_fF\text{-}\mathrm{Isoc}(\cur{X}/K)$ such that $i_c^*E'=E_0'$;
\item any $E' \in\cur{N}\mathrm{Isoc}^\dagger(\cur{X}/\ekd)$ is a quotient of $i_c^*E$ for some $E\in\cur{N}_fF\text{-}\mathrm{Isoc}^\dagger(\cur{X}/K)$.
\end{enumerate}

This time I use the adjunction map
\[ f^*f_*E \rightarrow E \]
for any $E\in \cur{N}_fF\text{-}\mathrm{Isoc}^\dagger(\cur{X}/K)$. The base change isomorphism
\[ i_c^*(\mathbf{R}^if_*E) \cong H^i_\rig(X/\ekd,i_c^*E)\]
shows that pulling back this adjunction map via $i_c^*$ gives the adjunction map
\[ H^0_\rig(X/\ekd,i_c^*E) \otimes \cur{O}_{X/\ekd}^\dagger\rightarrow i_c^*E. \]
associated to $i_c^*E$. Then $i_c^*E$ being constant implies that this latter map is an isomorphism, hence by rigidity $f^*f_*E \rightarrow E$ is also an isomorphism. Similarly, and again using base change, if $E_0'\subset i_c^*E$ is the largest constant sub-isocrystal, then $E':= f^*f_*E$ is a subobject of $E$ such that $i_c^*E'=E'_0$.

For the third, again I will show inductively that the canonical objects $U_n\in \isoc{X/\ekd}$ extend to objects $W_n\in \cur{N}_f\fisoc{\cur{X}/K}$, starting with $W_1=\mathcal{O}^\dagger_{\cur{X}/K}$ extending $U_1=\mathcal{O}^\dagger_{X/\ekd}$. The induction hypothesis will require in addition that:
\begin{enumerate} \item $f_*W_{n}^\vee \cong \cur{O}_{C/K}^\dagger$;
\item there exists a map $s^*W_n^\vee\rightarrow \cur{O}_{C/K}^\dagger$ such that the composite 
\[ \cur{O}_{C/K}^\dagger \cong f_*W_{n}^\vee \cong s^*(f^*f_*W_{n}^\vee)  \rightarrow s^*W_{n}^\vee \rightarrow \cur{O}_{C/K}^\dagger \]
is an isomorphism.
\end{enumerate}
I will construct $W_{n+1}$ as an extension of $W_n$ by $f^*(\mathbf{R}^1f_*W_n^\vee)^\vee $. From the five term exact sequence coming from the Leray spectral sequence for `global' absolute Frobenius cohomology (see Remark \ref{globalabs}) together with the global projection formula \cite[ \S3]{Laz15} and the induction hypothesis, there is an exact sequence
\begin{align*} 0&\rightarrow H^1_F(C/K,(\mathbf{R}^1f_*W_n^\vee)^\vee) \rightarrow \mathrm{Ext}^1_{\fisoc{\cur{X}/K}}(W_n,f^*(\mathbf{R}^1f_*W_n^\vee)^\vee) \rightarrow \mathrm{End}_{\fisoc{C/K}}(\mathbf{R}^1f_*W_n^\vee) \\ &\rightarrow H^2_F(C/K,(\mathbf{R}^1f_*W_n^\vee)^\vee) \rightarrow H^2_F(\cur{X}/K,W_n^\vee\otimes f^*(\mathbf{R}^1f_*W_n^\vee)^\vee )\rightarrow \ldots . \end{align*}
Now use the hypothesised map $s^*W_n^\vee\rightarrow \cur{O}_{C/K}^\dagger$ to split the maps
\begin{align*} H^1_F(C/K,(\mathbf{R}^1f_*W_n^\vee)^\vee) \rightarrow \mathrm{Ext}^1_{\fisoc{\cur{X}/K}}(W_n,f^*(\mathbf{R}^1f_*W_n^\vee)^\vee) \\
H^2_F(C/K,(\mathbf{R}^1f_*W_n^\vee)^\vee) \rightarrow H^2_F(\cur{X}/K,W_n^\vee\otimes f^*(\mathbf{R}^1f_*W_n^\vee)^\vee )
\end{align*}
appearing in the above sequence, and hence define $W_{n+1}$ to be the extension class 
\[[W_{n+1}]\in \mathrm{Ext}^1_{\fisoc{\cur{X}/K}}(W_n,f^*(\mathbf{R}^1f_*W_n^\vee)^\vee) \]
which maps to the identity in $\mathrm{End}_{\fisoc{C/K}}(\mathbf{R}^1f_*W_n^\vee)$ and to zero in $H^1_F(C/K,(\mathbf{R}^1f_*W_n^\vee)^\vee)$. By the base change formula for $\mathbf{R}^1f_*$, this is indeed an extension of $U_{n+1}$ to $\mathcal{N}\fisoc{\cur{X}/K}$. The completion of the proof is then exactly as in Theorem \ref{bc}. \end{proof}

\section{Good reduction criteria}

In the rest of this article, I will show how to use the `non-abelian' information contained in $\pi_1^\rig(X/\ekd,x)$ to give a criterion for a smooth, semistable curve over $F$ to have good reduction, analogous to that provided by Andreatta, Iovita and Kim in \cite{AIK15}. To formulate this criterion, I will first need to discuss the correct analogue of being `crystalline' for $\pn$-modules over $\ekd$.

Let $S_K:=\cur{V}\pow{t}\otimes_\cur{V} K$ denote the bounded power series ring over $K$, equivalently this is the `plus part' of the bounded Robba ring $\ekd$, i.e. those series $\sum_ia_it^i\in \ekd$ such that $a_i=0$ for $i<0$. Given our fixed choice of Frobenius on $S_K$, it makes sense to consider the category $\pnm{S_K}$ of $\pn$-modules over $S_K$.

\begin{proposition}[\cite{dJ98}, Theorem 1.1] The base extension functor
\[ \pnm{S_K}\rightarrow \pnm{\ekd} \]
is fully faithful.
\end{proposition}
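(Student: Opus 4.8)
The plan is to reduce the statement to a computation of horizontal, Frobenius-invariant global sections and then to invoke the convergence argument at the heart of de Jong's theorem. The base extension functor is $M\mapsto M\otimes_{S_K}\ekd$; it is a $K^\sigma$-linear tensor functor between rigid categories, so for $M,N\in\pnm{S_K}$ one has $\mathrm{Hom}_{\pnm{S_K}}(M,N)=\mathrm{Hom}_{\pnm{S_K}}(\mathbf{1},M^\vee\otimes N)$, where $\mathbf{1}=S_K$ is the unit object, and similarly over $\ekd$. Moreover $\mathrm{Hom}_{\pnm{S_K}}(\mathbf{1},P)$ is precisely the space $P^{\nabla=0,\varphi=\mathrm{id}}$ of sections killed by the connection and fixed by Frobenius. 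Hence, setting $P:=M^\vee\otimes N\in\pnm{S_K}$, full faithfulness is equivalent to the assertion that the natural map
\[ P^{\nabla=0,\varphi=\mathrm{id}}\longrightarrow (P\otimes_{S_K}\ekd)^{\nabla=0,\varphi=\mathrm{id}} \]
is a bijection for every $P\in\pnm{S_K}$.

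Injectivity is immediate: the inclusion $S_K\hookrightarrow\ekd$ is injective and $P$ is finite free over $S_K$, so $P\hookrightarrow P\otimes_{S_K}\ekd$, and the map on invariants is the restriction of this injection. The whole content is therefore surjectivity, namely that any $v\in P\otimes_{S_K}\ekd$ with $\nabla v=0$ and fixed by $\varphi$ already lies in the $S_K$-submodule $P$. Fixing an $S_K$-basis $e_1,\dots,e_r$ of $P$ and writing $v=\sum_j f_j e_j$ with $f_j\in\ekd$, this amounts to showing that each $f_j$ lies in $S_K=\rk^+\cap\ekd$, i.e. that its Laurent expansion $\sum_i a_{j,i}t^i$ has no terms of negative degree.

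This is exactly where the Frobenius structure is essential, and is the content of \cite[Theorem 1.1]{dJ98}. First I would use horizontality: the condition $\nabla v=0$ is a first-order differential system for the vector $(f_j)$ with coefficients in $S_K$, so $(f_j)$ solves a $p$-adic differential equation and is in particular an analytic section on a boundary annulus. The key input is then Dwork's trick. Since $\sigma(t)=ut^q$ with $u\in S_K^\times$ and $u\equiv 1\bmod\pi$, the endomorphism $\sigma$ multiplies the $t$-adic order by $q$, so the Frobenius relation $\varphi(v)=v$ relates the behaviour of $v$ near the boundary to its behaviour on a strictly larger annulus; iterating this relation forces the radius of convergence of the $f_j$ up to $1$. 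Thus $f_j\in\rk^+$, and combined with $f_j\in\ekd$ this gives $f_j\in S_K$, as required.

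The main obstacle is precisely this convergence argument: neither condition alone suffices — horizontality yields only convergence on an annulus, while the Frobenius relation alone is merely a functional equation — and it is their interaction through Dwork's trick that rules out a pole at $t=0$. Since this analytic heart is established in \cite{dJ98}, the remaining work in a self-contained write-up would be to check that our rings $S_K$, $\ekd$ and our chosen Frobenius $\sigma$ (with $\sigma(t)=ut^q$, $u\equiv1\bmod\pi$) satisfy de Jong's hypotheses, and then to quote his theorem for the surjectivity step.
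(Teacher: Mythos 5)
Your proposal is correct and takes essentially the same route as the paper, which offers no independent argument and simply quotes \cite[Theorem 1.1]{dJ98} for exactly this statement: your reduction via internal Homs to the bijectivity of $P^{\nabla=0,\varphi=\mathrm{id}}\rightarrow (P\otimes_{S_K}\ekd)^{\nabla=0,\varphi=\mathrm{id}}$, with injectivity checked directly and surjectivity deferred to de Jong (plus the hypothesis-checking you flag for the $q$-power Frobenius, ramified $K$, and $\sigma(t)=ut^q$), is just the standard unwinding of that citation. One small caution on your gloss of the analytic heart: over $\ekd$ convergence on a boundary annulus is automatic from the definition (horizontality is what earns this in de Jong's genuinely harder setting over the completion $\ek$, whose elements converge on no annulus), and the Frobenius iteration yields analytic continuation across the punctured disc, after which the negative-degree part is excluded by a Gauss-norm growth comparison using boundedness — not by ``radius of convergence $1$'' alone — but since you ultimately quote de Jong for this step, the argument stands.
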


\begin{definition} I will call an object $M\in \pnm{\ekd}$ \emph{non-singular} if it is in the essential image of this functor.
\end{definition}

The terminology is supposed to suggest that $t=0$ is a non-singular point of the differential equation. One way to think about this is that $S_K$ is a characteristic zero lift of the ring of integers $R$ of $F$. Hence if $\pn$-modules over $\ekd$ are thought of as $p$-adic local systems on $\spec{F}$, $\pn$-modules over $S_K$ should be thought of as $p$-adic local systems on $\spec{R}$. Hence $M\in \pnm{\ekd}$ is being non-singular means that it extends to a local system over $\spec{R}$; this provides at least part of the justification for this being the correct equicharacteristic analogue of a mixed charactersitic $p$-adic representation being crystalline.

Of course, this notion of `non-singularity' extends to ind-objects, and it therefore makes sense to speak of the non-abelian $\pn$-module
\[\pi_1^\rig(X/\ekd,x)=\spec{\cur{A}_{\infty,x}^{\rig,\vee}}\]
being non-singular. Using full-faithfulness (and compatibility with tensor products) of the base extension functor
\[ \pnm{S_K}\rightarrow \pnm{\ekd} \]
this is equivalent to the existence of a Hopf algebra over $\underline{\mathbf{M}\Phi}_{S_K}^\nabla$ whose base change to $\ekd$ is $\cur{A}^{\rig,\vee}_{\infty,x}$.

\begin{definition} \label{defn: stable} Let $\cur{X}\rightarrow \spec{R}$ be a flat morphism of relative dimension $1$. Then $\cur{X}$ is said to be semistable if it is \'etale locally isomorphic to either $\spec{R[x,y]/(xy-t)}$ or $\spec{R[x]}$. It is called stable if in addition it is proper over $\spec{R}$, and geometrically (i.e. over $\bar k$), any component of the special fibre isomorphic to $\P^1$ meets the other components in at least three points.
\end{definition}

\begin{remark} The conditions imply that $\cur{X}$ is regular, and that the generic fibre is smooth over $F$. Note that the definition applies equally well over any trait $S$ (i.e. the spectrum of a discrete valuation ring).
\end{remark}

The criterion for good reduction is then the following.

\begin{theorem}\label{main2} Let $\cur{X}\rightarrow \spec{R}$ be a stable curve with generic fibre $X$ geometrically connected of genus $g\geq 2$. Let $x\in X(F)$ be a rational point. Then $\cur{X}$ is smooth over $R$ if and only if the rigid fundamental group $\pi_1^\rig(X/\ekd,x)$ is non-singular.
\end{theorem}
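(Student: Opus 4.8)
The plan is to reformulate non-singularity as the vanishing of a monodromy operator, to show that this operator is an invariant of the special fibre alone, and then to import the mixed-characteristic theorem of Andreatta--Iovita--Kim by deforming that special fibre to characteristic zero. Since $\cur{X}\to\spec{R}$ is stable it is in particular semistable, so $X$ has semistable reduction and the non-abelian $\pn$-module $\pi_1^\rig(X/\ekd,x)$ is \emph{regular} in the sense indicated in the introduction (its singularity at $t=0$ is at worst logarithmic, coming from the canonical log structure on the special fibre). For regular objects non-singularity is equivalent to the vanishing of the associated monodromy operator $N$, so the assertion to be proved becomes: $\cur{X}$ is smooth over $R$ if and only if $N=0$ on $\pi_1^\rig(X/\ekd,x)$.

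The forward implication is the easy one. If $\cur{X}$ is smooth over $R$ then its special fibre $X_0$ is a smooth proper curve over $k$ carrying the trivial log structure, and the fundamental group is computed by a \emph{convergent} (not merely overconvergent) unipotent object which manifestly descends along $S_K\hookrightarrow\ekd$; equivalently the log-monodromy $N$ vanishes because there are no nodes. One may also deduce this from the comparison with the global situation: realising $\cur{X}/R$ as the completion at $c$ of a smooth proper family $\mathcal{X}\to C$, Theorem \ref{bc2} identifies $\pi_1^\rig(X/\ekd,x)$ with $i_c^*(\pi_1^\rig(\mathcal{X}/C,s))$, the pullback of an overconvergent $F$-isocrystal on $C$ which is unramified at $c$, and hence non-singular.

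For the converse I argue by contraposition and transport the problem to mixed characteristic. Suppose $\cur{X}$ is not smooth, so that $X_0$ is a genuinely nodal stable curve, which I regard as a log-smooth curve over $k$ via its canonical log structure at the nodes; by properness the point $x$ extends to a section, marking a point $x_0\in X_0(k)$. Because the stack of stable pointed curves of genus $g$ is smooth over $\Z$, I may lift $(X_0,x_0)$ to a stable curve $\cur{X}'$ with a section over a complete mixed-characteristic discrete valuation ring $\cur{V}'$ with residue field $k$ (after a finite extension if necessary), whose special fibre is again $X_0$ and is therefore singular. Applying the theorem of Andreatta--Iovita--Kim \cite{AIK15} to $\cur{X}'$: since $\cur{X}'$ is not smooth, the $p$-adic unipotent fundamental group $\pi_1^\et(X'_{\overline{K'}},x')_{\Q_p}$ is not crystalline, and in the semistable situation this failure is detected precisely by the non-vanishing of its monodromy operator $N'$.

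It remains to identify the two monodromy operators, and this is the crux of the argument: I claim $N=N'$. Both are obtained from the unipotent log-(crystalline) fundamental group of the \emph{same} characteristic-$p$ log scheme $X_0/k$, the equicharacteristic operator $N$ through the log-rigid theory over $\ekd$ and the mixed-characteristic operator $N'$ through the Hyodo--Kato / semistable comparison over $\cur{V}'$; in each case $N$ is the residue of the connection along $t=0$, an invariant attached to the log structure on $X_0$ and independent of the chosen characteristic-zero deformation. Establishing this equality is the $p$-adic, equicharacteristic analogue of the identification of $p$-adic and topological monodromy operators carried out in \cite{AIK15}, and is where the real work lies. Granting it, $\cur{X}$ non-smooth gives $N'\neq0$, hence $N\neq0$, hence $\pi_1^\rig(X/\ekd,x)$ fails to be non-singular, which proves the contrapositive and completes the argument.
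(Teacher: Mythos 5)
Your skeleton coincides with the paper's: reduce non-singularity to vanishing of a monodromy operator via regularity (\S\ref{soln}), push the monodromy down to the log special fibre, deform $(X_0,M_0)$ to mixed characteristic (the paper uses unobstructedness of log-smooth deformations, \cite[Proposition 8.6]{Kat96}, where you invoke smoothness of the stack of stable curves -- an inessential difference), and conclude via \cite{AIK15}. But the proposal has genuine gaps at exactly the two load-bearing steps, and you flag the second one yourself (``Granting it\ldots''). First, regularity of $\pi_1^\rig(X/\ekd,x)$ in the semistable case is asserted, not proved: ``its singularity at $t=0$ is at worst logarithmic'' is the statement of Theorem \ref{theo: ssreg}, whose proof in the paper is a substantial Tannakian base-change argument showing $\pi_1^\rig(X/\ekd,x)\cong \pi_1^{\lc}((\cur{X},M)/S_K,s)\otimes_{S_K}\ekd$, together with further base-change propositions identifying the resulting monodromy with that of the non-abelian $(\varphi,N)$-module $\pi_1^{\lc}((X_0,M_0)/K,s_0)$; all of this runs through the universal unipotent objects $W_n$ and proper base change in log-crystalline cohomology, and none of it is automatic.

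Second, and more seriously, the identification $N=N'$ is not merely ``the residue of the connection is an invariant of the log structure on $X_0$, independent of the deformation.'' On the mixed-characteristic side $N'$ lives a priori on $\mathbf{D}_\mathrm{st}$ of a non-abelian Galois representation, and relating it to the log-crystalline (Hyodo--Kato) monodromy of the special fibre is the content of the paper's Theorem \ref{theo: dst}, proved by combining the non-abelian comparison $\mathbf{D}_\mathrm{st}(\pi_1^\et(Y_{\overline{K}},y)_{\Q_p})\cong \spec{A_{\infty,s_0}^{\cris,\vee}}$ of \cite[Theorem 1.8]{AIK15} with a universal-objects argument identifying $\spec{A_{\infty,s_0}^{\cris,\vee}}$ with $\pi_1^{\lc}((X_0,M_0)/K,s_0)$ as group schemes over $\underline{\mathbf{M}\Phi}_K^N$. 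Declaring this ``where the real work lies'' and then granting it means the proposal stops short of a proof precisely at its crux. A minor further defect: your alternative argument for the forward direction via Theorem \ref{bc2} presupposes that $\cur{X}/R$ is the formal completion of a smooth proper family over a global curve $C$, which is not available for an arbitrary $\cur{X}/\spec{R}$; in the paper both directions fall out of the single chain of equivalences, so no separate argument is needed.
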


The basic idea is to consider the special fibre as a log-smooth curve. The deformation theory of such curves is unobstructed, so this special fibre can be deformed to a stable curve in mixed characteristic. By comparing suitable monodromy operators I can then in fact deduce Theorem \ref{main2} from the mixed characteristic result in \cite{AIK15}.

Before beginning the proof properly, let me note here that it is easy to show that if $K'/K$ is a totally ramified extension, then there is a natural base change isomorphism 
\[ \pi_1^\rig(X/\cur{E}_{K'}^\dagger,x)\isomto \pi_1^\rig(X/\ekd,x) \otimes_{\ekd} \cur{E}^\dagger_{K'} \]
of $\pn$-modules over $\cur{E}^\dagger_{K'}$. To prove Theorem \ref{main2} I can therefore assume that $K=K_0$ is absolutely unramified, which I shall do for the rest of this article. In particular, $K^\sigma=\Q_q$.

\section{Monodromy for \texorpdfstring{$\pn$}{pn}-modules over \texorpdfstring{$\ekd$}{ekd}} \label{soln}

The purpose of this section is to introduce the notion of `regularity' for objects of $\pnm{\ekd}$, which is a logarithmic analogue of non-singularity, and show that for regular objects, non-singularity is equivalent to the vanishing of a certain monodromy operator. Let $\delta_t$ denote the `logarithmic' derivation $\delta_t(f)=t\frac{\partial f}{\partial t}$ on $S_K$.

\begin{definition} A logarithmic $\pn$-module over $S_K$ is a finite $S_K$-module $M$ together with
\begin{itemize} \item a logarithmic connection, that is a $K$-linear map $\nabla^{\log}:M\rightarrow M$ such that
\[ \nabla^{\log}(fm)=f\nabla^{\log}(m)+\delta_t(f)m\]
for all $f\in S_K$ and $m\in M$;
\item a horizontal Frobenius $\varphi:\sigma^*M\rightarrow M$.
\end{itemize}  The category of logarithmic $\pn$-modules over $S_K$ is denoted $\underline{\mathbf{M}\Phi}_{S_K}^{\nabla,\log}$. 
\end{definition}

The condition of admitting a Frobenius structure forces these objects to be free, by \cite[Proposition 3.1.4]{Tsu98} so it follows in the usual way that the category $\underline{\mathbf{M}\Phi}_{S_K}^{\nabla,\log}$ is in fact Tannakian over $\Q_q$. We have obvious functors
\[ \underline{\mathbf{M}\Phi}_{S_K}^{\nabla} \rightarrow \underline{\mathbf{M}\Phi}_{S_K}^{\nabla,\log} \]
sending $(M,\nabla)$ to $(M,\nabla^{\log}=t\nabla)$, and
\[\underline{\mathbf{M}\Phi}_{S_K}^{\nabla,\log} \rightarrow \underline{\mathbf{M}\Phi}_{\ekd}^{\nabla}  \]
sending $(M,\nabla^{\log})$ to $(M\otimes \ekd, \nabla=t^{-1}\nabla^{\log})$. This latter functor is fully faithful by \cite[Theorem 6.3.1]{Ked00}. 

\begin{definition} I will call objects in the essential image of $\underline{\mathbf{M}\Phi}_{S_K}^{\nabla,\log} \rightarrow \underline{\mathbf{M}\Phi}_{\ekd}^{\nabla}$ `regular'.
\end{definition}

The terminology is supposed to reflect the fact that the differential equation has regular singularities at $t=0$. The `special fibre' of such objects is then a $(\varphi,N)$-module over $K$.

\begin{definition} A $(\varphi,N)$-module over $K$ is a finite dimensional $K$-vector space $V$ together with a $\sigma$-linear bijective map $\varphi:V\rightarrow V$ and a nilpotent linear map $N:V\rightarrow V$ such that $N\varphi=q\varphi N$. The category of $(\varphi,N)$-modules over $K$ is denoted by $\underline{\mathbf{M}\Phi}_K^N$. 
\end{definition}

By taking the `residue' of the connection at $t=0$ we obtain a functor
\[\underline{\mathbf{M}\Phi}_{S_K}^{\nabla,\log} \rightarrow  \underline{\mathbf{M}\Phi}_K^N, \]
the presence of Frobenius structures again ensures that this residue is indeed nilpotent. In particular, we can associate a $(\varphi,N)$-module $V$ to any \emph{regular} $\pn$-module over $\ekd$.

\begin{proposition} Let $M\in \pnm{\ekd}$ be regular. Then $M$ is non-singular if and only if the associated monodromy operator $N$ vanishes.
\end{proposition}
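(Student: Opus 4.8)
The plan is to unwind both `non-singularity' and `regularity' into explicit conditions on a single $S_K$-lattice, and then to exploit the full faithfulness of the two functors landing in $\pnm{\ekd}$. Write the regular structure on $M$ as $M=M_0\otimes_{S_K}\ekd$ with $\nabla_M=t^{-1}\nabla^{\log}_0$, for a logarithmic $\pn$-module $(M_0,\nabla^{\log}_0,\varphi)$ over $S_K$; by full faithfulness of $\underline{\mathbf{M}\Phi}_{S_K}^{\nabla,\log}\rightarrow\pnm{\ekd}$ (i.e. \cite[Theorem 6.3.1]{Ked00}) this structure is unique up to isomorphism, so $N$ is unambiguously defined. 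Since $\delta_t(t)=t$, the operator $\nabla^{\log}_0$ preserves $tM_0$, and the residue $N$ is exactly the endomorphism it induces on $V=M_0/tM_0$; in particular $N=0$ if and only if $\nabla^{\log}_0(M_0)\subseteq tM_0$.

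For the `if' direction, suppose $N=0$, so $\nabla^{\log}_0(M_0)\subseteq tM_0$. I would then set $\nabla_1:=t^{-1}\nabla^{\log}_0$, which now lands in $M_0$, and from $\nabla^{\log}_0(fm)=\delta_t(f)m+f\nabla^{\log}_0(m)=t\,\partial_t(f)m+f\nabla^{\log}_0(m)$ one reads off the ordinary Leibniz rule $\nabla_1(fm)=\partial_t(f)m+f\nabla_1(m)$. It remains to check that the same $\varphi$ is horizontal for $\nabla_1$; this is the only genuine computation, and it simply reverses the relation defining the functor $\pnm{S_K}\rightarrow\underline{\mathbf{M}\Phi}_{S_K}^{\nabla,\log}$, using $\sigma(t)=ut^q$ to pass between the factor $\partial_t(\sigma(t))$ controlling horizontality of $\nabla_1$ and the factor $\delta_t(\sigma(t))/\sigma(t)$ controlling horizontality of $\nabla^{\log}_0$. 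This yields an object $(M_0,\nabla_1,\varphi)\in\pnm{S_K}$ whose base change to $\ekd$ is $(M,\nabla_M)$, so $M$ is non-singular.

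For the `only if' direction, suppose $M$ is non-singular, say $M=M_1\otimes_{S_K}\ekd$ for some $(M_1,\nabla_1,\varphi)\in\pnm{S_K}$. Applying the regularisation functor $(M_1,\nabla_1)\mapsto(M_1,t\nabla_1)$ and then base changing to $\ekd$ recovers $(M,\nabla_M)$, since the two resulting compositions $\pnm{S_K}\rightrightarrows\pnm{\ekd}$ agree ($t^{-1}\cdot t\nabla_1=\nabla_1$); thus $(M_1,t\nabla_1,\varphi)$ is a regular structure on $M$. By the uniqueness coming from full faithfulness it is isomorphic to $(M_0,\nabla^{\log}_0,\varphi)$, so $N$ may be computed as the residue of $t\nabla_1$ on $M_1/tM_1$. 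Since $\nabla_1$ already preserves $M_1$, we have $t\nabla_1(M_1)\subseteq tM_1$ and the induced operator on $M_1/tM_1$ vanishes, giving $N=0$.

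The main, and essentially only, obstacle is the bookkeeping of Frobenius horizontality when the connection is multiplied or divided by $t$; this is precisely where the explicit form $\sigma(t)=ut^q$ (with $u\equiv 1\bmod\pi$) is needed, but it is a routine calculation. The conceptual backbone is instead the uniqueness of the regular structure supplied by Kedlaya's full faithfulness theorem, which is what lets me identify the monodromy operator attached to the a priori different logarithmic structures arising in the two directions.
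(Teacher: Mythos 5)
Your proof is correct and follows essentially the same route as the paper's: the forward direction is the observation that $t\nabla_1$ preserves $tM_1$, and the converse divides $\nabla^{\log}$ by $t$ (legitimate since the Frobenius structure forces freeness over $S_K$) and checks the Leibniz rule and Frobenius horizontality. Your only addition is the explicit appeal to Kedlaya's full faithfulness of $\underline{\mathbf{M}\Phi}_{S_K}^{\nabla,\log}\rightarrow\pnm{\ekd}$ to make $N$ well-defined independently of the chosen regular model; the paper uses this implicitly (it is what makes the residue functor on regular objects well-defined and the ``clear'' direction clear), so making it explicit is a harmless and indeed welcome refinement rather than a different argument.
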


\begin{proof} One direction is clear, so suppose for the converse that the monodromy operator $N$ associated to some $(M,\nabla^{\log})\in \underline{\mathbf{M}\Phi}_{S_K}^{\nabla,\log}$ vanishes. Then explicitly what this says is that for all $m\in M$, $\nabla^{\log}(m)\in tM$. Since $M$ is free as an $S_K$-module, we may define a map $\nabla:M\rightarrow M$ by simply setting $\nabla(m)=t^{-1}\nabla^{\log}(m)$. Then one easily checks that $\nabla(fm)=f\nabla(m)+\frac{\partial f}{\partial t}\cdot m$, so that $\nabla$ is a standard, non-logarithmic connection on $M$. Moreover the compatibility of the Frobenius $\varphi$ with $\nabla^{\log}$ implies its compatibility with $\nabla$. In other words $(M,\nabla)$ is an object of $\pnm{S_K}$ whose associated log-$\pn$-module is exactly $(M,\nabla^{\log})$.
\end{proof}

\section{Regularity of \texorpdfstring{$\pi_1^\rig(X,x)$}{pi1rig} in the semistable case} \label{sec: ssreg}

In the previous section I introduced the notion of regularity for $\pn$-modules over $\ekd$. The same definition can be extended to ind-$\pn$-modules, hence it makes sense to ask whether or not the fundamental group $\pi_1^\rig(X/\ekd,x)$ of some geometrically connected $F$-variety $X$ at some rational point $x\in X(F)$ is regular. As expected, this will be true in the semistable case.

\begin{theorem} \label{theo: ssreg}
Let $f:\cur{X}\rightarrow \spec{R}$ be proper and semistable, with geometrically connected generic fibre $X$, and let $x\in X(F)$. Then the rigid fundamental group $\pi_1^\rig(X/\ekd,x)$ is regular.
\end{theorem}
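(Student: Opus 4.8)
The plan is to reduce the regularity of the whole fundamental group to the regularity of the cohomology groups out of which it is built, and then to establish the latter using the log-smooth special fibre of the semistable model. First I would record the formal closure properties of the class of regular objects inside $\pnm{\ekd}$. Since the functor $\underline{\mathbf{M}\Phi}_{S_K}^{\nabla,\log}\rightarrow\pnm{\ekd}$ is a fully faithful tensor functor, the regular objects are closed under tensor products and duals; the key additional point I expect to need is that they are also closed under extensions in $\pnm{\ekd}$. Via the residue functor of \S\ref{soln} this amounts to the statement that an extension of $\pn$-modules each admitting a log-lattice over $S_K$ with nilpotent residue again admits such a lattice, which should follow from the fact that the monodromy of an extension of objects with unipotent monodromy is again unipotent.

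Granting this, I would re-run the induction defining the universal objects $W_n\in\cur{N}_\mathrm{rel}\fisoc{X/K}$ from the proof of Theorem \ref{bc}. Recall $W_{n+1}$ is an extension of $W_n$ by $H^1_\rig(X/\ekd,W_n^\vee)^\vee\otimes\cur{O}_{X/K}^\dagger$; applying the exact functor $x^*$, the $\pn$-module $x^*W_{n+1}$ sits in an extension of $x^*W_n$ by $H^1_\rig(X/\ekd,W_n^\vee)^\vee$. Since $x^*W_1=\ekd$ is visibly regular, and the coordinate ring $\cur{A}^{\rig,\vee}_{\infty,x}$ is assembled as an increasing union of the finite-dimensional $\pn$-modules $x^*W_n^\vee$, by the closure properties above it suffices to prove that each cohomology group $H^1_\rig(X/\ekd,W_n^\vee)$ is regular.

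The geometric content is therefore the claim that $H^i_\rig(X/\ekd,E)$ is regular whenever $E$ lies in $\cur{N}_\mathrm{rel}\fisoc{X/K}$. Here I would use that the special fibre $X_0$ of $\cur{X}\rightarrow\spec{R}$, equipped with the canonical log structure coming from $t=0$, is log-smooth over the standard log point $(\spec{k},\N)$, and lifts étale-locally to a log-smooth formal scheme over $\spf{\cur{V}\pow{t}}$ of the shape $\spf{\cur{V}\pow{t}\tate{x,y}/(xy-t)}$. Choosing a log-smooth frame of this kind and computing log-de Rham cohomology of the tube relative to $S_K$ produces finite free $S_K$-modules carrying a logarithmic connection and a horizontal Frobenius, i.e. objects of $\underline{\mathbf{M}\Phi}_{S_K}^{\nabla,\log}$; the point is that inverting $t$, that is applying $-\otimes_{S_K}\ekd$, recovers $H^i_\rig(X/\ekd,E)$ together with its $\pn$-structure, exhibiting it as regular. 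To make this run with nonconstant $E$, I would extend the $W_n$ to relatively unipotent \emph{log}-isocrystals on $X_0^{\log}$ in tandem with their construction, so that the comparison applies with coefficients.

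The main obstacle is precisely this comparison with coefficients: that log-de Rham (equivalently log-rigid) cohomology of the log-smooth special fibre, with relatively unipotent log coefficients, is a finite free $S_K$-module whose base change to $\ekd$ is $H^i_\rig(X/\ekd,-)$, compatibly with connection and Frobenius. For constant coefficients this is essentially the semistable comparison already implicit in the finiteness and base-change results of \cite{LP16}, and the relatively unipotent case should then follow by the five lemma once the log coefficients are in hand. A cleaner, if more laborious, alternative would be to rebuild the entire Tannakian construction of \S\ref{maini} inside the category of log-$F$-isocrystals on $X_0^{\log}$, producing a regular fundamental group as an ind-object of $\underline{\mathbf{M}\Phi}_{S_K}^{\nabla,\log}$, and then to invoke the comparison once, at the level of fundamental groups, to identify its image in $\pnm{\ekd}$ with $\pi_1^\rig(X/\ekd,x)$; this would avoid the closure-under-extensions step altogether and mirrors the template already set up in the proof of Theorem \ref{cgt}.
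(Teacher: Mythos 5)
Your closing ``alternative'' is in fact the paper's actual proof, so the proposal does contain a correct route: the paper introduces the category $F\text{-}\mathrm{Isoc}((\cur{X},M)/K)$ of log-$F$-isocrystals on the semistable model with its canonical log structure, extends $x$ to a section $s:(\spec{R},L)\rightarrow (\cur{X},M)$ using properness, forms the relative fundamental group $\pi_1^{\lc}((\cur{X},M)/S_K,s)$ attached to the pair $s^*,f^*$ as an affine group scheme over $\underline{\mathbf{M}\Phi}_{S_K}^{\nabla,\log}$, and then runs the Theorem \ref{cgt} template verbatim to show $\pi_1^\rig(X/\ekd,x)\cong \pi_1^{\lc}((\cur{X},M)/S_K,s)\otimes_{S_K}\ekd$, the ``all-important base change'' being $H^i_{\lc}((\cur{X},M)/S_K,E)\otimes_{S_K}\ekd\cong H^i_\rig(X/\ekd,j^\dagger E)$, verified by completing to $\ek$ and invoking proper base change in log-crystalline cohomology (together with full faithfulness of $j^\dagger$, Proposition \ref{prop: jdagff}). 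Your instinct that this route ``mirrors the template already set up in the proof of Theorem \ref{cgt}'' is exactly right; it is also, contrary to your assessment, the \emph{less} laborious option, since all the machinery is already in place.

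Your primary route, by contrast, has a genuine gap: the closure of regular objects under extensions in $\pnm{\ekd}$ is asserted but not proved, and the justification offered does not apply. Regular objects are not unipotent --- they are merely log-extendable over $S_K$ with nilpotent residue --- so ``monodromy of an extension of unipotents is unipotent'' is not the relevant statement. What is actually needed is the existence of a \emph{bounded} $S_K$-lattice, stable under Frobenius, inside an extension of two such lattices tensored up to $\ekd$; equivalently, surjectivity of $\mathrm{Ext}^1_{\underline{\mathbf{M}\Phi}_{S_K}^{\nabla,\log}}(N_2,N_1)\rightarrow \mathrm{Ext}^1_{\pnm{\ekd}}(N_2\otimes\ekd,N_1\otimes\ekd)$. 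This is a nontrivial boundedness/descent statement of exactly the type for which the paper needs the de Jong and Kedlaya full faithfulness theorems, and the mixed-characteristic analogue should give you pause: extensions of semistable Galois representations need not be semistable absent a de Rham hypothesis (e.g.\ the extension of $\Q_p$ by $\Q_p$ attached to $\log\chi_{\mathrm{cyc}}$). Worse, the route is circular as structured: to prove regularity of $H^1_\rig(X/\ekd,W_n^\vee)$ you must extend $W_n$ to a log-isocrystal ``in tandem'', but lifting the defining extension class of $W_{n+1}$ to the log category is precisely the content of the Tannakian/base-change argument --- and once you have the log $W_n$, regularity of $x^*W_n=s^*W_n^{\log}\otimes_{S_K}\ekd$ is immediate and the closure lemma is never used. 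So the extension-closure step should be discarded and the final-paragraph alternative promoted to the main argument.
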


Since $\cur{X}$ is semistable, there is a canonical log structure $M$ on $\cur{X}$ coming from the special fibre. Consider the category of log-$F$-isocrystals on $(\cur{X},M)$ relative to $K$, defined as follows. Let $((\cur{X},M)/W)_\mathrm{cris}^{\log}$ denote the log crystalline site of $(\cur{X},M)$ relative to $W$ with the trivial log structure, as defined for example in \cite{Kat89}. Thus objects consists of triples $(U,(T,N),\delta)$ where $U$ is \'etale over $X$, $(T,N)$ is an exact nilpotent thickening of $(U,M)$ over $\spec{W}$ and $\delta$ is a PD structure on the ideal of $U$, compatible with the canonical PD structure on $(p)\subset W$. Let $\cur{O}_{({\cur{X},M)/W}}$ denote the sheaf of rings on $((\cur{X},M)/W)_\mathrm{cris}^{\log}$ defined by $(U,(T,N),\delta)\mapsto \Gamma(T,\cur{O}_T)$.  

Let $\mathrm{Cris}((\cur{X},M)/W)$ denote the category of finitely presented crystals of $\cur{O}_{({\cur{X},M)/W}}$-modules. Let $\mathrm{Isoc}((\cur{X},M)/K)$ denote the isogeny category $\mathrm{Cris}((\cur{X},M)/W)_{\Q}$, this is functorial in $(\cur{X},M)$ and $K$. Hence there is a Frobenius pullback functor
\[ F^*:\mathrm{Isoc}((\cur{X},M)/K)\rightarrow \mathrm{Isoc}((\cur{X},M)/K) \]
and it makes sense to speak of Frobenius structures on objects in $\mathrm{Isoc}((\cur{X},M)/K)$. Let $F\text{-}\mathrm{Isoc}((\cur{X},M)/K)$ denote the category of isocrystals together with a Frobenius structure. Let $\cur{O}_{({\cur{X},M)/K}}$ denote $\cur{O}_{({\cur{X},M)/W}}$ viewed as an object of $\mathrm{Isoc}((\cur{X},M)/K)$ or $F\text{-}\mathrm{Isoc}((\cur{X},M)/K)$.

Denote by $L$ the log structure on $\spec{R}$ coming from the closed point, then the analogous category $F\text{-}\mathrm{Isoc}((\spec{R},L)/K)$ can be defined similarly, and using the Frobenius lift $\sigma$ on $S_K$ there is a fully faithful realisation functor
\[ F\text{-}\mathrm{Isoc}((\spec{R},L)/K) \rightarrow  \underline{\mathbf{M}\Phi}_{S_K}^{\nabla,\log}. \]

\begin{lemma} \label{eqiov} This functor is an equivalence of categories.
\end{lemma}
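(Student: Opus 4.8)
The plan is to realise the functor as the log-crystalline comparison isomorphism followed by the inclusion that forgets a quasi-nilpotence condition, and then to show that this condition is automatic once a Frobenius structure is present. Since full faithfulness has already been granted in the statement, the only thing to establish is essential surjectivity.

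First I would recall that the formal scheme $\spf{\cur{V}\pow{t}}$, equipped with the log structure associated to the divisor $t=0$, is log-smooth over $\spf{\cur{V}}$ (with trivial log structure) and reduces modulo $\pi$ to $(\spec{R},L)$. It therefore serves as a log-smooth lift through which the log-crystalline site may be linearised. Invoking Kato's comparison \cite{Kat89} (the log Poincar\'e lemma along a log-smooth lift), together with its isogeny version, I obtain an equivalence between $\mathrm{Isoc}((\spec{R},L)/K)$ and the category of finite $S_K$-modules equipped with an integrable, \emph{quasi-nilpotent} log-connection $\nabla^{\log}$. Under this equivalence the chosen Frobenius lift $\sigma$ on $S_K$ (with $\sigma(t)=ut^q$) realises the Frobenius pullback $F^*$, so that Frobenius structures on log-isocrystals correspond precisely to horizontal Frobenii $\varphi\colon\sigma^*M\to M$. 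The realisation functor of the lemma is then exactly this equivalence composed with the evident inclusion into $\underline{\mathbf{M}\Phi}_{S_K}^{\nabla,\log}$ that drops the quasi-nilpotence requirement.

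Consequently, essential surjectivity reduces to the claim that every object $(M,\nabla^{\log},\varphi)$ of $\underline{\mathbf{M}\Phi}_{S_K}^{\nabla,\log}$ has quasi-nilpotent log-connection, and here I would use the Frobenius structure decisively. By \cite[Proposition 3.1.4]{Tsu98} such an $M$ is free, and taking the residue of $\nabla^{\log}$ at $t=0$ yields a $(\varphi,N)$-module over $K$; the defining relation $N\varphi=q\varphi N$ forces the eigenvalues of $N$ to be stable under multiplication by $q$, hence $N$ is nilpotent (this is exactly the nilpotence already noted in constructing the residue functor $\underline{\mathbf{M}\Phi}_{S_K}^{\nabla,\log}\to\underline{\mathbf{M}\Phi}_K^N$). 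I would then check that, for a log-connection on the formal disc, nilpotence of the residue is equivalent to quasi-nilpotence in the log-crystalline sense: reducing modulo $\pi$ over $R=k\pow{t}$ and using the identity $\delta_t^p=\delta_t$ on $k\pow{t}$, the log $p$-curvature is governed by the residue, so a nilpotent residue produces a nilpotent $p$-curvature, which is precisely the quasi-nilpotence condition. Hence every object of $\underline{\mathbf{M}\Phi}_{S_K}^{\nabla,\log}$ lies in the essential image and the functor is an equivalence.

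The main obstacle I anticipate is this final matching of conditions, namely pinning down the correct form of ``quasi-nilpotent'' for the log-crystalline comparison and verifying that it is implied by nilpotence of the residue, rather than the naive characteristic-zero convergence condition $\binom{\nabla^{\log}}{n}\to0$, which a nonzero nilpotent residue would in fact violate. Getting this right demands care with the log divided-power structure on the diagonal and with the reduction-mod-$\pi$ (i.e.\ $p$-curvature) description of quasi-nilpotence; by contrast, log-smoothness of the lift, the passage to the isogeny category, and compatibility of the two notions of Frobenius are all formal once the comparison theorem is available.
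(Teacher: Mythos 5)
Your overall framing coincides with the paper's: the realisation functor is Kato's comparison composed with forgetting quasi-nilpotence, full faithfulness is already in place, and essential surjectivity reduces to showing that a Frobenius structure forces quasi-nilpotence of the log connection. The paper disposes of that last assertion in one line by citing \cite[Proposition 3.4.2]{Tsu98}. However, your substitute argument for it --- nilpotence of the residue implies quasi-nilpotence --- is false, and it is exactly the step you yourself flagged as delicate. The residue governs the $p$-curvature only at $t=0$. Mod $p$, for a rank-one log connection $\nabla^{\log}(\delta_t)=\delta_t+f$ over $k\pow{t}$, the identity $\delta_t^p=\delta_t$ and the standard rank-one $p$-curvature formula give $\psi(\delta_t)=(\delta_t+f)^p-(\delta_t+f)=f^p-f+\delta_t^{p-1}(f)$, which depends on all of $f$, not just on $f(0)$. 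Taking $f=t$ yields residue $0$ but $\psi(\delta_t)=t^p-t+t=t^p$, i.e.\ multiplication by $t^p$, which is not nilpotent on $k\pow{t}$. Since quasi-nilpotence mod $p$ amounts precisely to nilpotence of the $p$-curvature (the operators $\prod_{i=0}^{p-1}\left(\nabla^{\log}(\delta_t)-i\right)$ occurring in the log Taylor series are congruent mod $p$ to $\nabla^{\log}(\delta_t)^p-\nabla^{\log}(\delta_t)$), this is a log connection with nilpotent residue that is \emph{not} quasi-nilpotent. Horizontality of $\psi$ does not rescue the argument: it only forces the coefficients of its characteristic polynomial to lie in $k\pow{t^p}$, not in $k$, so nilpotence at the closed point does not propagate over the formal disc.

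Of course the counterexample admits no Frobenius structure, and that is the real point: quasi-nilpotence must be extracted from $\varphi$ itself, not from its shadow on the residue. The mechanism (this is the content of Tsuzuki's proposition, which is the paper's entire proof) is Frobenius iteration: horizontality of $\varphi$ gives $\nabla^{\log}\circ\varphi=\frac{\delta_t(\sigma(t))}{\sigma(t)}\cdot\varphi\circ\sigma^*\nabla^{\log}$, and since $\sigma(t)=ut^q$ with $u\equiv 1\bmod \pi$, the factor $\frac{\delta_t(\sigma(t))}{\sigma(t)}=q+\delta_t(u)/u$ has norm $<1$; iterating through the isomorphism $\varphi$ makes the iterates of $\nabla^{\log}(\delta_t)$ $p$-adically small on a suitable lattice, which is exactly topological quasi-nilpotence. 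Your earlier steps are sound --- freeness via \cite[Proposition 3.1.4]{Tsu98}, and nilpotence of $N$ from $N\varphi=q\varphi N$ (which the paper uses, but only to construct the residue functor to $\underline{\mathbf{M}\Phi}_K^N$) --- but they cannot be parlayed into quasi-nilpotence; replace the residue argument with the Frobenius-iteration argument (or the citation) and the proof closes.
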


\begin{proof} It suffices to observe that the presence of a Frobenius structure enforces quasi-nilpotence of the connection \cite[Proposition 3.4.2]{Tsu98}.
\end{proof}

By locally lifting $\cur{X}$ to affine schemes of characteristic $0$ admitting Frobenius lifts, realising on these open affines, applying $j^\dagger$, and then gluing, there is a functor
\[ j^\dagger: F\text{-}\mathrm{Isoc}((\cur{X},M)/K)\rightarrow \fisoc{X/K}. \]
The presence of Frobenius means that overconvergence conditions are automatic, by \cite[Proposition 3.25]{LP16}.

Now, for any $E\in F\text{-}\mathrm{Isoc}((\cur{X},M)/K)$ the relative log-crystalline cohomology groups 
\[ H^i_\mathrm{\log\text{-}\mathrm{cris}}((\cur{X},M)/S_K,E)\]
(see for example \cite[\S2]{HK94}) can be considered as objects of $\underline{\mathbf{M}\Phi}_{S_K}^{\nabla,\log}$, there are also absolute cohomology groups $H^i_\mathrm{\log\text{-}\mathrm{cris}}((\cur{X},M)/K,E)$, which are (not necessarily finite dimensional) vector spaces over $K$ together with a (not necessarily bijective) $\sigma$-linear endomorphism. By promoting these latter cohomology groups to complexes of $K$-modules with a weak Frobenius, it is possible, exactly as in \S\ref{safc}, to define `absolute' Frobenius cohomology groups in this context (as vector spaces over $\Q_q$, possibly infinite dimensional), satisfying all the expected formal properties. Using the zeroeth relative and absolute cohomology groups, I can now prove the following.

\begin{proposition} \label{prop: jdagff} The functor $ j^\dagger: F\text{-}\mathrm{Isoc}((\cur{X},M)/K)\rightarrow \fisoc{X/K}$ is fully faithful.
\end{proposition}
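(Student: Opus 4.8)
The plan is to reduce full faithfulness to a statement about global flat sections, and then to a single relative comparison in degree zero between log-crystalline cohomology over $S_K$ and overconvergent rigid cohomology over $\ekd$. Since $j^\dagger$ is an exact, $\Q_q$-linear symmetric monoidal functor commuting with duals (the objects in play being locally free), and both source and target are $\Q_q$-linear abelian rigid tensor categories, full faithfulness is equivalent to showing that for every $E\in F\text{-}\mathrm{Isoc}((\cur{X},M)/K)$ the natural map $\mathrm{Hom}(\mathbf{1},E)\to \mathrm{Hom}(\mathbf{1},j^\dagger E)$ is bijective (one then applies this to $E_1^\vee\otimes E_2$, using $j^\dagger(E_1^\vee\otimes E_2)\cong (j^\dagger E_1)^\vee\otimes j^\dagger E_2$). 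Exactly as in \S\ref{safc}, using the log-analogue of absolute Frobenius cohomology discussed above, these two Hom groups are identified with the absolute Frobenius cohomology groups $H^0_F((\cur{X},M)/K,E)$ and $H^0_F(X/K,j^\dagger E)$ respectively.

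Next I would feed these into the log- and rigid analogues of the Leray spectral sequence of Proposition \ref{leray}, taking the structure map $f\colon (\cur{X},M)\to (\spec{R},L)$. Since these are first-quadrant spectral sequences, the degree-zero edge maps give
\[ H^0_F((\cur{X},M)/K,E)\cong H^0_F\!\left(H^0_{\lc}((\cur{X},M)/S_K,E)\right), \qquad H^0_F(X/K,j^\dagger E)\cong H^0_F\!\left(H^0_\rig(X/\ekd,j^\dagger E)\right), \]
where on the right of each the outer $H^0_F$ now denotes absolute Frobenius cohomology of a (log-)$\pn$-module, i.e.\ $\mathrm{Hom}(\mathbf{1},-)$ in $\underline{\mathbf{M}\Phi}_{S_K}^{\nabla,\log}$ (via Lemma \ref{eqiov}) respectively in $\pnm{\ekd}$. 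It therefore suffices to establish two facts. First, the base-change functor $\underline{\mathbf{M}\Phi}_{S_K}^{\nabla,\log}\to \pnm{\ekd}$, which is fully faithful by \cite[Theorem 6.3.1]{Ked00}, carries $\mathrm{Hom}(\mathbf{1},-)$ to $\mathrm{Hom}(\mathbf{1},-)$; this is immediate from full faithfulness together with the fact that the unit object maps to the unit object. Second, the relative comparison
\[ H^0_{\lc}((\cur{X},M)/S_K,E)\otimes_{S_K}\ekd \isomto H^0_\rig(X/\ekd,j^\dagger E) \]
holds as an isomorphism of $\pn$-modules over $\ekd$, compatibly with connection and Frobenius.

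The main obstacle is this last relative comparison. For it I would work locally on a smooth lift of $\cur{X}$ over $\cur{V}\pow{t}$ carrying a Frobenius, reduce log-crystalline cohomology to log-de Rham cohomology following \cite{Kat89,HK94}, and identify $H^0$ on both sides with global flat sections of the respective realisations. In degree zero the comparison is robust: a section annihilated by the logarithmic connection $\nabla^{\log}$ is annihilated by $\nabla=t^{-1}\nabla^{\log}$ and conversely, so the relevant kernels coincide, and since the residue of the connection along $t=0$ is nilpotent — forced by the Frobenius structure, as recorded in Lemma \ref{eqiov} — horizontal sections of $j^\dagger E$ on the generic fibre are automatically bounded and extend across the special fibre, with no new flat sections being created or destroyed by passage to $j^\dagger$. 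Compatibility with the base connection and with Frobenius is built into the construction of $j^\dagger$. I expect this degree-zero comparison to be the only genuinely non-formal input; everything else is the Tannakian and adjunction formalism already assembled in \S\S\ref{safc}--\ref{maini} together with the full faithfulness of \cite[Theorem 6.3.1]{Ked00}.
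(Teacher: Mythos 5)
Your overall skeleton coincides with the paper's: reduce full faithfulness to the computation of $\mathrm{Hom}(\mathbf{1},-)$ via internal homs and duals, identify these groups with Frobenius-invariant flat sections of the relative $H^0$ (the paper does this directly, you via the degree-zero edge of the Leray spectral sequence --- same content), invoke Kedlaya's full faithfulness \cite[Theorem 6.3.1]{Ked00} for $\underline{\mathbf{M}\Phi}_{S_K}^{\nabla,\log}\rightarrow\pnm{\ekd}$, and funnel everything through the single non-formal input
\[ H^0_{\lc}((\cur{X},M)/S_K,E)\otimes_{S_K}\ekd \isomto H^0_\rig(X/\ekd,j^\dagger E). \]
You correctly isolate this comparison as the crux. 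The problem is your proposed proof of it. You argue that since $\ker\nabla^{\log}=\ker(t^{-1}\nabla^{\log})$ and the residue along $t=0$ is nilpotent, relative horizontal sections of $j^\dagger E$ ``are automatically bounded and extend across the special fibre.'' This conflates two different connections. The kernel identity and the nilpotent-residue property (Lemma \ref{eqiov}) concern the log connection in the \emph{base} direction $t$; they are what makes the pushforward a regular object and what transfers $\mathrm{Hom}(\mathbf{1},-)$ along the base-change functor (which, as you yourself note, already follows from full faithfulness). But the two sides of the displayed comparison are kernels of the \emph{relative} connections along the fibres of $f$, and the surjectivity of the base-change map --- that a fibrewise-flat section of $j^\dagger E$ over a strict neighbourhood of the tube of $X$ comes, after $\otimes_{S_K}\ekd$, from a fibrewise-flat section over the whole log-formal scheme --- is precisely the non-formal content. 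Nilpotence of the residue in the $t$-direction does not by itself produce this extension, and no estimate or extension theorem is supplied; the step is asserted, not proved. (A smaller but real slip: there is no ``smooth lift of $\cur{X}$ over $\cur{V}\pow{t}$'' even locally in the singular locus --- $\cur{X}$ is only semistable, so one has local \emph{log}-smooth lifts with Frobenius; this is repairable but symptomatic of the same conflation.)

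The paper closes exactly this gap by a different and genuinely shorter mechanism: base change along the faithfully flat extension $\ekd\rightarrow\ek$, where the log structure trivialises ($t$ becomes invertible), so that proper base change in log-crystalline cohomology together with the crystalline--rigid comparison for the smooth proper generic fibre identifies $H^0_{\lc}((\cur{X},M)/S_K,E)\otimes_{S_K}\ek$ with $H^0_\rig(X/\ek,\widehat{j^\dagger E})$; faithfully flat descent then gives the isomorphism over $\ekd$. If you want to salvage a direct flat-section argument instead, you would essentially have to reprove a logarithmic extension theorem for relative horizontal sections near $t=0$, which is substantially harder than the completed-base-change route. As written, your proof is incomplete at its one acknowledged non-formal step.
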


\begin{proof} Given $E,E'\in F\text{-}\mathrm{Isoc}((\cur{X},M)/K)$ there is an internal hom object
\[ \cur{H}om(E,E')\in F\text{-}\mathrm{Isoc}((\cur{X},M)/K)\]
such that 
\begin{align*} \mathrm{Hom}_{F\text{-}\mathrm{Isoc}((\cur{X},M)/K)}(E,E') &\cong \mathrm{Hom}_{F\text{-}\mathrm{Isoc}((\cur{X},M)/K)}(\cur{O}_{({\cur{X},M)/K}},\cur{H}om(E,E')) \\&\cong H^0_{\log\text{-}\mathrm{cris}}((\cur{X},M)/K, \cur{H}om(E,E'))^{\varphi=\mathrm{id}},
\end{align*}
moreover the group $H^0_{\log\text{-}\mathrm{cris}}	((\cur{X},M)/K, \cur{H}om(E,E'))^{\varphi=\mathrm{id}}$ can be identified with
\[ \mathrm{Hom}_{\underline{\mathbf{M}\Phi}_{S_K}^{\nabla,\log}}(S_K,H^0_{\log\text{-}\mathrm{cris}}((\cur{X},M)/S_K, \cur{H}om(E,E')))  .\]
Now, formation of internal homs commutes with
\[ j^\dagger:F\text{-}\mathrm{Isoc}((\cur{X},M)/K)\rightarrow \fisoc{X/K},\]
and there is a similar calculation computing hom groups in the latter category. Moreover, for any $E\in F\text{-}\mathrm{Isoc}((\cur{X},M)/K)$ the base change map
\[ H^0_{\log\text{-}\mathrm{cris}}((\cur{X},M)/S_K, E) \otimes_{S_K} \ekd \isomto H^0_\rig(X/\ekd,j^\dagger E) \]
is an isomorphism (as can be seen for example by base change to $\ek$ and applying proper base change for log-crystalline cohomology). Hence the required full faithfulness theorem follows from the corresponding claim for 
\[ \underline{\mathbf{M}\Phi}_{S_K}^{\nabla,\log} \rightarrow \pnm{\ekd}, \]
i.e. \cite[Theorem 6.3.1]{Ked00}.
\end{proof}

\begin{corollary} The category $F\text{-}\mathrm{Isoc}((\cur{X},M)/K)$ is Tannakian over $\Q_q$.
\end{corollary}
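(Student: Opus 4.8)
The plan is to mimic the proof of Corollary \ref{tann2}: exhibit a faithful, exact, tensor functor from $F\text{-}\mathrm{Isoc}((\cur{X},M)/K)$ into a category already known to be Tannakian over $\Q_q$, and then reduce the entire statement to a computation of the endomorphism ring of the unit object. First I would record that $F\text{-}\mathrm{Isoc}((\cur{X},M)/K)$ is a rigid abelian $\Q_q$-linear tensor category: the abelian structure is inherited from the category of crystals, tensor products and internal homs (hence duals) are formed objectwise with their natural Frobenius structures, and $\Q_q=K^\sigma$-linearity reflects the passage to the Frobenius-fixed field. None of this is difficult; it is the standard formalism for isocrystals equipped with a Frobenius structure.

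Next, the functor $j^\dagger$ of Proposition \ref{prop: jdagff} is fully faithful, in particular faithful; it is exact, being essentially a restriction (localisation) functor; and, as already observed in the proof of Proposition \ref{prop: jdagff}, it commutes with the formation of internal homs, so it is a tensor functor. Its target $\fisoc{X/K}$ is Tannakian over $K^\sigma$ by Corollary \ref{tann2}, and since I have arranged $K=K_0$ to be absolutely unramified we have $K^\sigma=\Q_q$. Composing $j^\dagger$ with a fibre functor for $\fisoc{X/K}$ therefore produces a faithful, exact, tensor functor from $F\text{-}\mathrm{Isoc}((\cur{X},M)/K)$ to a category of vector spaces over a field extension of $\Q_q$, that is, a fibre functor.

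At this point the only remaining verification is that $\mathrm{End}(\cur{O}_{(\cur{X},M)/K})=\Q_q$ for the unit object $\cur{O}_{(\cur{X},M)/K}$. But full faithfulness of $j^\dagger$ together with the identity $j^\dagger\cur{O}_{(\cur{X},M)/K}=\cur{O}_{X/K}^\dagger$ give
\[ \mathrm{End}_{F\text{-}\mathrm{Isoc}((\cur{X},M)/K)}(\cur{O}_{(\cur{X},M)/K}) \cong \mathrm{End}_{\fisoc{X/K}}(\cur{O}_{X/K}^\dagger) = K^\sigma = \Q_q, \]
the last equalities again being part of Corollary \ref{tann2}. This completes the argument. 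The substantive content --- full faithfulness of $j^\dagger$ --- is already in hand from Proposition \ref{prop: jdagff}, so the only points requiring any care are the (routine) exactness and monoidality of $j^\dagger$ and the verification of the rigid abelian tensor structure; I do not expect any serious obstacle, and the proof is in essence identical to that of the corollaries following Lemma \ref{ef}.
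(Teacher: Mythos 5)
Your argument is correct and is essentially the paper's own: the corollary is stated there without proof precisely because it follows from Proposition \ref{prop: jdagff} in the same way that Corollary \ref{tann2} follows from the faithful exact tensor functor to $\mathrm{Isoc}^\dagger(X/\ekd)$, namely by embedding via $j^\dagger$ into $\fisoc{X/K}$ (Tannakian over $K^\sigma=\Q_q$ under the standing assumption $K=K_0$) and computing $\mathrm{End}$ of the unit object through full faithfulness. Your aside that the Frobenius structure makes the routine rigidity verifications go through is exactly the point of the paper's remark following the corollary, that Frobenius forces nilpotent residues (and local freeness), so no extra hypothesis on the logarithmic objects is needed.
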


\begin{remark} Normally to obtain Tannakian categories of `logarithmic' objects, one needs to impose some sort of condition of having `nilpotent residues'. The point is that the presence of Frobenius structures forces objects to have nilpotent residues.
\end{remark}

Let $\cur{N}_fF\text{-}\mathrm{Isoc}((\cur{X},M)/K)$ denote the category of relatively unipotent objects in $F\text{-}\mathrm{Isoc}((\cur{X},M)/K)$, i.e. those which are iterated extensions of those pulled back from $\underline{\mathbf{M}\Phi}_{S_K}^{\nabla,\log}\cong F\text{-}\mathrm{Isoc}((\spec{R},L)/K)$ via $f^*$. Since $f$ is proper, the smooth point $x:\spec{F}\rightarrow X$ extends uniquely to a section $s:(\spec{R},L)\rightarrow (\cur{X},M)$ of $f$. Associated to the pair of functors
\[ s^*:\cur{N}_fF\text{-}\mathrm{Isoc}((\cur{X},M)/K)  \leftrightarrows \underline{\mathbf{M}\Phi}_{S_K}^{\nabla,\log} :f^*\]
there is therefore an affine group scheme $\pi_1^{\log\text{-}\mathrm{cris}}((\cur{X},M)/S_K,s)$ over $\underline{\mathbf{M}\Phi}_{S_K}^{\nabla,\log}$ as in \S\ref{fgtc}. The commutative diagram
\[  \xymatrix{\cur{N}_fF\text{-}\mathrm{Isoc}((\cur{X},M)/K) \ar[r]^-{j^\dagger}\ar@/^1pc/[d]^{s^*} & \cur{N}_\mathrm{rel}\fisoc{X/K}  \ar@/^1pc/[d]^{x^*} \\ \underline{\mathbf{M}\Phi}_{S_K}^{\nabla,\log} \ar[r]^{-\otimes \ekd} \ar[u]^{f^*} & \underline{\mathbf{M}\Phi}^{\nabla}_{\ekd} \ar[u]^{-\otimes \cur{O}_{X/K}^\dagger}  } \]
induces a homomorphism
\[ \pi_1^\rig(X/\ekd,x)\rightarrow \pi_1^{\log\text{-}\mathrm{cris}}((\cur{X},M)/S_K,s)\otimes_{S_K} \ekd\]
of affine group schemes over $\pnm{\ekd}$. Now the proof of Theorem \ref{theo: ssreg} boils down to the following.

\begin{proposition}
The map $\pi_1^\rig(X/\ekd,x)\rightarrow \pi_1^{\log\text{-}\mathrm{cris}}((\cur{X},M)/S_K,s)\otimes_{S_K} \ekd$ is an isomorphism.
\end{proposition}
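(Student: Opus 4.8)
The plan is to follow the proof of Theorem \ref{bc} (equivalently its reprise in Theorem \ref{cgt}) essentially verbatim, under the dictionary which replaces the relative cohomology in its guise as $H^i_\rig(X/\ekd,-)$ by the relative log-crystalline cohomology $H^i_{\lcris}((\cur{X},M)/S_K,-)$, viewed as a functor into $\underline{\mathbf{M}\Phi}_{S_K}^{\nabla,\log}$, and the absolute Frobenius cohomology of \S\ref{safc} by its log-crystalline counterpart over $\Q_q$ sketched before Proposition \ref{prop: jdagff}. First I would apply the forgetful fibre functor $\omega:\pnm{\ekd}\to\mathrm{Vec}_{\ekd}$; since it is faithful and exact it suffices to check the induced map of affine group schemes over $\ekd$ is an isomorphism. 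Extending the square of the Proposition by a third column $\cur{N}\isoc{X/\ekd}$, $\mathrm{Vec}_{\ekd}$ exactly as in the proof of Theorem \ref{cgt}, Theorem \ref{bc} identifies $\omega(\pi_1^\rig(X/\ekd,x))$ with the Tannakian fundamental group of $\cur{N}\isoc{X/\ekd}$. The problem thus reduces to a base change statement for the relative Tannakian square whose top arrow is $j^\dagger$ followed by the forgetful functor $\cur{N}_\mathrm{rel}\fisoc{X/K}\to\cur{N}\isoc{X/\ekd}$, whose vertical functors are $s^*$ and $f^*$, and whose bottom arrow is the composite fibre functor $\underline{\mathbf{M}\Phi}_{S_K}^{\nabla,\log}\to\mathrm{Vec}_{\ekd}$. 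By Proposition \ref{se} together with \cite[Appendix A]{EHS08} this reduces to three claims: (1) if $j^\dagger E$ is constant in $\cur{N}\isoc{X/\ekd}$ then $E\cong f^*M$ for some $M\in\underline{\mathbf{M}\Phi}_{S_K}^{\nabla,\log}$; (2) the largest constant sub-isocrystal $E_0'\subset j^\dagger E$ over $\ekd$ is $j^\dagger E'$ for some subobject $E'\subset E$; and (3) every object of $\cur{N}\isoc{X/\ekd}$ is a quotient of some $j^\dagger E$.

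The key input driving the first two claims is the base change isomorphism
\[ H^i_{\lcris}((\cur{X},M)/S_K,E)\otimes_{S_K}\ekd \isomto H^i_\rig(X/\ekd,j^\dagger E), \]
which holds for $i=0$ by Proposition \ref{prop: jdagff} and for all $i$ by base change to $\ek$ together with proper base change for log-crystalline cohomology. Given this, claims (1) and (2) follow as before: the adjunction map $f^*f_*E\to E$, with $f_*=H^0_{\lcris}((\cur{X},M)/S_K,-)$, pulls back under $j^\dagger$ (via the $i=0$ isomorphism) to the downstairs adjunction $H^0_\rig(X/\ekd,j^\dagger E)\otimes\cur{O}_{X/K}^\dagger\to j^\dagger E$. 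Hence $j^\dagger E$ constant forces the latter to be an isomorphism, and since $j^\dagger$ is faithful and exact (Proposition \ref{prop: jdagff}) it reflects isomorphisms, so $f^*f_*E\to E$ is an isomorphism and $E\cong f^*(f_*E)$; the sub-object case is identical with $E':=f^*f_*E$.

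The substance is claim (3), which I would prove exactly as in Theorem \ref{bc} by lifting the universal unipotent objects $U_n\in\isoc{X/\ekd}$ to objects $W_n\in\cur{N}_fF\text{-}\mathrm{Isoc}((\cur{X},M)/K)$ by induction, starting from $W_1=\cur{O}^\dagger_{(\cur{X},M)/K}$ and strengthening the hypothesis to demand $f_*W_n^\vee\cong S_K$ (the unit of $\underline{\mathbf{M}\Phi}_{S_K}^{\nabla,\log}$, via Lemma \ref{eqiov}) together with a splitting map $s^*W_n^\vee\to S_K$ inducing an isomorphism on the canonical line. To build $W_{n+1}$ as an extension of $W_n$ by $f^*(H^1_{\lcris}((\cur{X},M)/S_K,W_n^\vee))^\vee$, I would feed this relative $H^1$ into the five-term exact sequence of the Leray spectral sequence for absolute log-crystalline Frobenius cohomology, use the log-crystalline projection formula and the induction hypothesis to identify the terms, and split off the unwanted absolute-cohomology contributions using the hypothesised map $s^*W_n^\vee\to S_K$, defining $[W_{n+1}]$ as the unique extension class mapping to the identity in $\mathrm{End}(H^1_{\lcris}((\cur{X},M)/S_K,W_n^\vee))$ and to zero in the absolute Frobenius group $H^1_F((H^1_{\lcris}((\cur{X},M)/S_K,W_n^\vee))^\vee)$. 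The base change formula for $H^1_{\lcris}$ guarantees $j^\dagger W_{n+1}\cong U_{n+1}$, and the strengthened hypotheses for $W_{n+1}$ are verified through the long exact sequence in $H^0_{\lcris}$ and the induced splitting, precisely as in the proof of Theorem \ref{bc}.

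The main obstacle is not any individual step, all of which are formally identical to the $\ekd$-case, but rather ensuring that the full formalism it relies on is genuinely available in the log-crystalline setting: that relative log-crystalline cohomology $H^i_{\lcris}((\cur{X},M)/S_K,-)$ really lands in $\underline{\mathbf{M}\Phi}_{S_K}^{\nabla,\log}$ with finite free values (so that $H^1_{\lcris}((\cur{X},M)/S_K,W_n^\vee)$ is an admissible coefficient), that the absolute log-crystalline Frobenius cohomology over $\Q_q$ carries the Leray spectral sequence and projection formula invoked above, and above all that the higher ($i=1$) base change isomorphism holds. This last point I would secure by reducing to $\ek$, where $S_K$-coefficients become $\ek$-coefficients and log-crystalline cohomology can be compared to rigid cohomology by proper base change, exactly as already indicated in the proof of Proposition \ref{prop: jdagff}.
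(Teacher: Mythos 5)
Your proposal is correct and follows essentially the same route as the paper, which proves this proposition by declaring it to be ``more or less exactly'' the argument of Theorems \ref{bc} and \ref{cgt}, with relative log-crystalline cohomology $H^i_{\lcris}((\cur{X},M)/S_K,-)$ valued in $\underline{\mathbf{M}\Phi}_{S_K}^{\nabla,\log}$ replacing $\mathbf{R}^if_*$, the log-crystalline version of absolute Frobenius cohomology replacing that of \S\ref{safc}, and the base change isomorphism $H^i_{\lcris}((\cur{X},M)/S_K,E)\otimes_{S_K}\ekd\cong H^i_\rig(X/\ekd,j^\dagger E)$ verified, just as you say, by passing to $\ek$ and invoking proper base change in log-crystalline cohomology. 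You have simply written out in full the dictionary and the three-claim reduction that the paper leaves implicit, so there is nothing to correct.
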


\begin{proof}  As in the proof of Theorem \ref{cgt}, it suffices to show this after `forgetting the $\pn$-module structures' on both sides, i.e. to show that the base change morphism
\[ \pi_1^\rig(X/\ekd,x)\rightarrow \pi_1^{\log\text{-}\mathrm{cris}}((\cur{X},M)/S_K,s)\otimes_{S_K} \ekd\]
associated to the commutative diagram
\[  \xymatrix{ \cur{N}_fF\text{-}\mathrm{Isoc}((\cur{X},M)/K) \ar[r]^-{j^\dagger}\ar@/^1pc/[d]^{s^*} & \cur{N}\isoc{X/\ekd}  \ar@/^1pc/[d]^{x^*} \\ \underline{\mathbf{M}\Phi}_{S_K}^{\nabla,\log} \ar[r] \ar[u]^{f^*} & \mathrm{Vec}_{\ekd} \ar[u]^{-\otimes \cur{O}_{X/\ekd}^\dagger}  } \]
is an isomorphism. This is proved more or less exactly as Theorem \ref{cgt}. The role of the pushforwards $\mathbf{R}^if_*$ is here played by the relative log crystalline cohomology
\[ H^i_{\log\text{-}\mathrm{cris}} ((\cur{X},M)/S_K,E)\in \underline{\mathbf{M}\Phi}_{S_K}^{\nabla,\log}  \]
 and that of absolute Frobenius cohomology by the entirely similar version of absolute Frobenius cohomology for $(\cur{X},M)$ (which I indicated how to construct in the paragraph preceding Proposition \ref{prop: jdagff}). The all-important base change map then becomes the isomorphism
\[ H^i_{\log\text{-}\mathrm{cris}} ((\cur{X},M)/S_K,E)  \otimes_{S_K} \ekd \cong H^i_\rig(X/\ekd,j^\dagger E)\]
of vector spaces over $\ekd$. This is again verified by base changing to $\ek$ and applying proper base change in log-crystalline cohomology.
\end{proof}

Thus the log-crystalline fundamental group $\pi_1^{\log\text{-}\mathrm{cris}}((\cur{X},M)/S_K,s)$ is a non-abelian log-$\pn$-module over $S_K$ giving rise to $\pi_1^\rig(X/\ekd,x)$ upon base change to $\ekd$, so $\pi_1^\rig(X/\ekd,x)$ is regular. Moreover, the associated monodromy operator $N$ can be described purely in terms of the log special fibre of $\cur{X}$ as follows.

Let $X_0$ denote the special fibre of the family $\cur{X}$, and $M_0$ the log-structure on $X_0$ obtained by pulling back $M$ via $X_0\rightarrow \cur{X}$. Let $W(L_0)$ denote the Teichm\"uller lift to $W$ of the log structure $L_0$ on $k$ of the punctured point. Consider the following two categories:
\begin{enumerate}
\item the category
\[ F\text{-}\mathrm{Isoc}((X_0,M_0)/K) =  F\text{-}\mathrm{Crys}((X_0,M_0)/W)_{\Q}  \]
of log-$F$-isocrystals on $(X_0,M_0)/W$, relative to the trivial log structure on $W$;
\item the isogeny category $\mathrm{Crys}((X_0,M_0)/(W,W(L_0)))_{\Q}$ of log-crystals on $(X_0,M_0)$ relative to the log structure $W(L_0)$ on $W$.
\end{enumerate}

\begin{remark} In the first case, the notation should be interpreted as $F\text{-}(\mathrm{Crys}_{\Q})$ and not $(F\text{-}\mathrm{Crys})_{\Q}$.
\end{remark}

Note that the corresponding categories $F\text{-}\mathrm{Isoc}((\spec{k},L_0)/K)$ and $\mathrm{Crys}((\spec{k},L_0)/(W,W(L_0))_{\Q}$ are simply the categories $\underline{\mathbf{M}\Phi}_K^N$ of $(\varphi,N)$-modules and $\mathrm{Vec}_K$ of finite dimensional vector spaces over $K$ respectively. The subcategory
\[ \cur{N}\mathrm{Crys}((X_0,M_0)/(W,W(L_0)))_{\Q} \subset \mathrm{Crys}((X_0,M_0)/(W,W(L_0)))_{\Q}\]
of unipotent objects  is Tannakian, and 
\[ s_0^*: \cur{N}\mathrm{Crys}((X_0,M_0)/(W,W(L_0)))_{\Q} \rightarrow \mathrm{Vec}_K \]
provides a fibre functor.

\begin{definition} Define $\pi_1^{\log\text{-}\mathrm{cris}}((X_0,M_0)/K,s_0)$ to be the associated affine group scheme over $K$.
\end{definition}

Next, let
\[ \cur{N}_{f_0}F\text{-}\mathrm{Isoc}((X_0,M_0)/K) \subset F\text{-}\mathrm{Isoc}((X_0,M_0)/K) \]
denote the category of relatively unipotent objects i.e. those which are iterated extensions of those pulled back from $\underline{\mathbf{M}\Phi}_{K}^{N}\cong F\text{-}\mathrm{Isoc}((\spec{k},L_0)/K)$ via $f_0^*$, then again associated to the pair of functors
\[ s_0^*:\cur{N}_{f_0}F\text{-}\mathrm{Isoc}((X_0,M_0)/K)  \leftrightarrows \underline{\mathbf{M}\Phi}_{K}^{N} :f_0^*\]
there is an affine group scheme over $\underline{\mathbf{M}\Phi}_{K}^{N}$ which for now I will denote by $G^{\log\text{-}\cris}_{(X_0,M_0),s_0}$. There is a commutative diagram
\[  \xymatrix{\cur{N}_fF\text{-}\mathrm{Isoc}((\cur{X},M)/K) \ar[r] \ar@/^1pc/[d]^{s^*} & \cur{N}_{f_0}F\text{-}\mathrm{Isoc}((X_0,M_0)/K)   \ar@/^1pc/[d]^{s_0^*} \ar[r] &\cur{N}\mathrm{Crys}((X_0,M_0)/(W,W(L_0)))_{\Q}  \ar@/^1pc/[d]^{s_0^*}  \\ \underline{\mathbf{M}\Phi}_{S_K}^{\nabla,\log} \ar[r]^{-\otimes_{S_K} K } \ar[u]^{f^*} & \underline{\mathbf{M}\Phi}^{N}_{K} \ar[u]^{f_0^*}  \ar[r] & \mathrm{Vec}_K, \ar[u]^{f_0^*}  } \]
where the horizontal arrows in the right hand square are simply the forgetful functors. This induces a homomorphism
\[G^{\log\text{-}\cris}_{(X_0,M_0),s_0} \rightarrow \pi_1^{\log\text{-}\mathrm{cris}}((\cur{X},M)/S_K,s)\otimes_{S_K} K \]
or affine group schemes over $\underline{\mathbf{M}\Phi}_K^N$, and homomorphisms
\begin{align*}
\pi_1^{\log\text{-}\mathrm{cris}}((X_0,M_0)/K,s_0) &\rightarrow \pi_1^{\log\text{-}\mathrm{cris}}((\cur{X},M)/S_K,s)\otimes_{S_K} K \\
\pi_1^{\log\text{-}\mathrm{cris}}((X_0,M_0)/K,s_0) &\rightarrow G^{\log\text{-}\cris}_{(X_0,M_0),s_0}
\end{align*}
of affine group schemes over $K$, such that the obvious triangle commutes.

\begin{proposition} All these homomorphisms are isomorphisms. 
\end{proposition}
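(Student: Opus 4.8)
These three homomorphisms form a commutative triangle, in which the map $\pi_1^{\log\text{-}\mathrm{cris}}((X_0,M_0)/K,s_0)\rightarrow \pi_1^{\log\text{-}\mathrm{cris}}((\cur{X},M)/S_K,s)\otimes_{S_K}K$ factors as the composite
\[ \pi_1^{\log\text{-}\mathrm{cris}}((X_0,M_0)/K,s_0)\rightarrow G^{\log\text{-}\cris}_{(X_0,M_0),s_0}\rightarrow \pi_1^{\log\text{-}\mathrm{cris}}((\cur{X},M)/S_K,s)\otimes_{S_K}K \]
(the last arrow being the first displayed homomorphism, viewed over $K$ via the forgetful functor $\underline{\mathbf{M}\Phi}_K^N\rightarrow\mathrm{Vec}_K$). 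It therefore suffices to prove that the two arrows in this composite are isomorphisms, and the plan is to treat each as a copy of one of the base change theorems already proved.

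For the arrow $\pi_1^{\log\text{-}\mathrm{cris}}((X_0,M_0)/K,s_0)\rightarrow G^{\log\text{-}\cris}_{(X_0,M_0),s_0}$, coming from the right-hand square, I would repeat the proof of Theorem \ref{bc} verbatim under the dictionary $X\rightsquigarrow(X_0,M_0)$, $\ekd\rightsquigarrow K$, $\pnm{\ekd}\rightsquigarrow\underline{\mathbf{M}\Phi}_K^N$, $\fisoc{X/K}\rightsquigarrow F\text{-}\mathrm{Isoc}((X_0,M_0)/K)$ and $\cur{N}\isoc{X/\ekd}\rightsquigarrow\cur{N}\mathrm{Crys}((X_0,M_0)/(W,W(L_0)))_{\Q}$, with rigid cohomology replaced throughout by log-crystalline cohomology $H^i_{\log\text{-}\mathrm{cris}}((X_0,M_0)/(W,W(L_0)),-)_{\Q}$. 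As in the original, this cohomology plays two roles — valued in $\underline{\mathbf{M}\Phi}_K^N$ on $F\text{-}\mathrm{Isoc}((X_0,M_0)/K)$ and in $\mathrm{Vec}_K$ on $\cur{N}\mathrm{Crys}((X_0,M_0)/(W,W(L_0)))_{\Q}$ — and one needs the analogous formal package: that $H^1$ computes $\mathrm{Ext}^1$, a version of absolute Frobenius cohomology (now $\mathbf{R}\mathrm{Hom}$ in a derived category of $(\varphi,N)$-modules over $K$, with its Leray spectral sequence and projection formula, constructed exactly as in \S\ref{safc} and indicated before Proposition \ref{prop: jdagff}), and the universal relatively unipotent objects $U_n$. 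Granting these, the proof reduces to the same three claims as in Theorem \ref{bc}, all of which go through word for word.

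For the arrow $G^{\log\text{-}\cris}_{(X_0,M_0),s_0}\rightarrow \pi_1^{\log\text{-}\mathrm{cris}}((\cur{X},M)/S_K,s)\otimes_{S_K}K$, coming from the left-hand square, I would instead copy the proof of Theorem \ref{cgt}, with the restriction-to-special-fibre functor $\cur{N}_fF\text{-}\mathrm{Isoc}((\cur{X},M)/K)\rightarrow\cur{N}_{f_0}F\text{-}\mathrm{Isoc}((X_0,M_0)/K)$ in place of $i_c^*$, the relative log-crystalline cohomology $H^i_{\log\text{-}\mathrm{cris}}((\cur{X},M)/S_K,-)\in\underline{\mathbf{M}\Phi}_{S_K}^{\nabla,\log}$ in place of $\mathbf{R}^if_*$, and the residue functor $\underline{\mathbf{M}\Phi}_{S_K}^{\nabla,\log}\rightarrow\underline{\mathbf{M}\Phi}_K^N$ in place of $-\otimes_{S_K}K$. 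Via the adjunction $f^*f_*E\rightarrow E$ the same three claims reduce to a single base change isomorphism: for $E\in F\text{-}\mathrm{Isoc}((\cur{X},M)/K)$ with restriction $E_0$ to the special fibre,
\[ H^i_{\log\text{-}\mathrm{cris}}((\cur{X},M)/S_K,E)\otimes_{S_K}K\cong H^i_{\log\text{-}\mathrm{cris}}((X_0,M_0)/(W,W(L_0)),E_0)_{\Q} \]
as $(\varphi,N)$-modules over $K$, together with its evident analogue for the two absolute Frobenius cohomologies.

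This base change isomorphism is the one genuinely geometric (rather than formal) ingredient, and I expect it to be the main obstacle: it asserts both that relative log-crystalline cohomology commutes with reduction modulo $t$ and that the residue of the resulting log-connection recovers the monodromy operator $N$ induced by the log structure $W(L_0)$. I would establish it exactly as the analogous statements earlier in the paper, by base changing further to $\ek$ and invoking proper base change for log-crystalline cohomology, using that $(\cur{X},M)$ reduces modulo $t$ to $(X_0,M_0)$ while the base $\mathrm{Spf}(\cur{V}\pow{t})$ reduces to $(W,W(L_0))$. Once both arrows are isomorphisms, commutativity of the triangle forces the remaining homomorphism to be an isomorphism as well.
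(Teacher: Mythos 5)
Your proposal coincides with the paper's proof, which consists of the single remark that ``the proof exactly follows the previous models, using proper base change in log crystalline cohomology'' --- i.e.\ exactly your plan of running the Theorem \ref{bc} template on the right-hand square, the Theorem \ref{cgt} template on the left-hand square (with $H^i_{\log\text{-}\mathrm{cris}}((\cur{X},M)/S_K,-)$ playing the role of $\mathbf{R}^if_*$), and closing the triangle by commutativity. One small correction: for the specialization $\otimes_{S_K}K$ your key base change isomorphism follows from proper base change applied directly along the exact closed immersion $(\spec{W},W(L_0))\rightarrow (\spf{\cur{V}\pow{t}},L')$ given by reduction mod $t$, so the detour ``base changing further to $\ek$'' --- appropriate for the generic-fibre comparisons earlier in the paper --- is neither needed nor available here, since $K$ is the residue field of $S_K$ at $t=0$ rather than a subfield of $\ek$.
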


Of course, the proof exactly follows the previous models, using proper base change in log crystalline cohomology. I will henceforth write $\pi_1^{\log\text{-}\mathrm{cris}}((X_0,M_0)/K,s_0) $ instead of $G^{\log\text{-}\cris}_{(X_0,M_0),s_0}$. The upshot, then, of all of this, is that the monodromy operator associated to the regular non-abelian $\pn$-module $\pi_1^\rig(X/\ekd,x)$ is simply the monodromy operator on the non-abelian $(\varphi,N)$-module $\pi_1^{\log\text{-}\mathrm{cris}}((X_0,M_0)/K,s_0)$.

\section{Deformations to mixed characteristic}

What the results of the previous section mean is that I can now completely forget about the semistable family $\cur{X}\rightarrow \spec{R}$, and concentrate solely on the special fibre as a `log-smooth stable curve' over $k$. Again, I will denote by $L_0$ the log structure of the punctured point on $k$.

\begin{definition} Let $(X_0,M_0)\rightarrow (\spec{k},L_0)$ be a finite type log-smooth morphism of log schemes. We say that $(X_0,M_0)$ is a semistable log curve over $k$ if \'etale locally it is strict \'etale over either
\begin{itemize}
\item $\spec{k[x,y]/(xy)}$ with the obvious log structure $\N^2\rightarrow k[x,y]/(xy) $, or;
\item $\spec{k[x]}$ with the `punctured' log structure $\N\rightarrow k[x]$, $1\mapsto 0$.
\end{itemize}
We say that $(X_0,M_0)$ is a stable log curve if it is moreover proper, and geometrically (i.e. over $\bar k$) every component isomorphic to $\mathbb{P}^1$ meets the other components in at least three points.
\end{definition}

Let $(X_0,M_0)$ be a geometrically connected stable log curve over $(k,L_0)$, and $s_0:(k,L_0)\rightarrow (X_0,M_0)$ a section of the structure morphism. Then as in \S\ref{sec: ssreg} above its unipotent fundamental group can be considered as a non-abelian $(\varphi,N)$-module $\pi_1^{\log\text{-}\mathrm{cris}}((X_0,M_0)/K,s_0)$. By the results of \S\ref{soln} and \S\ref{sec: ssreg}, the proof of Theorem \ref{main2} amounts to showing that if the monodromy operator $N$ on $\pi_1^{\log\text{-}\mathrm{cris}}((X_0,M_0)/K,s_0)$ vanishes, then $X_0$ is in fact smooth over $k$ (rather than just log-smooth). A completely algebraic proof of this fact is the subject of current work in progress of Chiarellotto, di\thinspace Proietto and Shiho \cite{CDPS}, but instead I will deduce it from the mixed characteristic result in \cite{AIK15}.

Let $L'$ denote the log structure on $\spec{W}$ coming from the closed point. Since the deformation theory of log-smooth curves is unobstructed \cite[Proposition 8.6]{Kat96}, it follows that there exists a log-smooth and proper curve $g:(\cur{Y},N)\rightarrow (\spec{W},L')$ with special fibre $(X_0,M_0)$. Thus $\cur{Y}$ is a stable curve over $W$ (in the sense of Definition \ref{defn: stable}), $N$ is the log structure associated to the special fibre $X_0$, and $M_0=N|_{X_0}$.

Let $Y/K$ denote the generic fibre of $\cur{Y}$, and lift the section $s_0$ to a section $t$ of $g$. Let $y\in Y(K)$ denote the corresponding point. It follows from \cite[Theorems 1.4, 1.6]{AIK15} that:
\begin{enumerate}
\item the non-abelian Galois representation $\pi_1^\et(Y_{\overline{K}},y)_{\Q_p}$ is semistable;
\item $\cur{Y}\rightarrow \spec{W}$ is smooth if and only if $\pi_1^\et(Y_{\overline{K}},y)_{\Q_p}$ is crystalline.
\end{enumerate}
Now let
\[ \mathbf{D}_\mathrm{st}: \mathrm{Rep}^\mathrm{st}_{\Q_p}(G_K) \rightarrow \underline{\mathbf{M}\Phi}_K^N \]
be Fontaine's functor taking a semistable Galois representation to its associated $(\varphi,N)$-module (forgetting the Hodge filtration $F^\bu$). It is well-known that $V \in \mathrm{Rep}^\mathrm{st}_{\Q_p}(G_K)$ is crystalline if and only if the monodromy operator $N$ on $\mathbf{D}_\mathrm{st}(V)$ is zero. Hence to complete the proof of Theorem \ref{main2} it suffices to show the following.

\begin{theorem} \label{theo: dst}
There is an isomorphism
\[ \mathbf{D}_\mathrm{st}(\pi_1^\et(Y_{\overline{K}},y)_{\Q_p}) \cong \pi_1^{\log\text{-}\mathrm{cris}}((X_0,M_0)/K,s_0) \]
of affine group schemes over $\underline{\mathbf{M}\Phi}_K^N$.
\end{theorem}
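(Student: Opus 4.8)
The plan is to deduce the statement from the Tannakian formalism of \S\ref{fgtc}, exactly as in the proofs of Theorems \ref{bc} and \ref{cgt}, but with the comparison functor now provided by $p$-adic Hodge theory. Both sides are fundamental groups of Tannakian categories: the source is obtained by applying the exact, faithful, tensor functor $\mathbf{D}_\mathrm{st}$ to the geometric \'etale fundamental group $\pi_1^\et(Y_{\overline{K}},y)_{\Q_p}$, which by \cite[Theorem 1.4]{AIK15} is a pro-unipotent group object in the category of semistable $G_K$-representations, and the target is the relative log-crystalline fundamental group of $(X_0,M_0)$. Since $\mathbf{D}_\mathrm{st}$ is exact, faithful, tensor, and compatible with duals, it suffices to exhibit a compatible comparison at the level of the universal unipotent objects and their first cohomology, and then to run the same inductive construction used throughout \S\S\ref{maini}--\ref{sec: ssreg}.

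The single nontrivial input is a comparison isomorphism
\[ \mathbf{D}_\mathrm{st}\!\left(H^1_\et(Y_{\overline{K}},V)\right) \cong H^1_{\log\text{-}\mathrm{cris}}((X_0,M_0)/K,E) \]
of $(\varphi,N)$-modules, natural in the unipotent local system $V$ and its associated log-$F$-isocrystal $E$ on $(X_0,M_0)$. For the trivial coefficient $V=\Q_p$ this is precisely the semistable (Hyodo--Kato, $C_\mathrm{st}$) comparison theorem for $H^1$ of the stable curve $\cur{Y}/W$, which identifies $\mathbf{D}_\mathrm{st}(H^1_\et(Y_{\overline{K}},\Q_p))$ with the Hyodo--Kato cohomology of $(X_0,M_0)$ together with its monodromy operator; the general unipotent case then follows by d\'evissage along the filtration of $V$ and the five lemma, using naturality of the comparison. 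One must also record that the \'etale--log-crystalline dictionary $V \leftrightarrow E$ is an equivalence on unipotent objects compatible with tensor products, so that it intertwines the two Tannakian structures.

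Granting this, I would carry out the induction exactly as in Theorem \ref{cgt}. Define the universal pro-unipotent objects $U_n^\et$ on the \'etale side and $U_n^{\log\text{-}\mathrm{cris}}$ on the log-crystalline side by the recursive extension procedure of \S\ref{maini}, each $U_{n+1}$ being the extension of $U_n$ by $H^1(U_n^\vee)^\vee$ classified by the identity endomorphism. Because $\mathbf{D}_\mathrm{st}$ commutes with $H^1$ via the displayed isomorphism and carries the defining extension class of $U_{n+1}^\et$ to that of $U_{n+1}^{\log\text{-}\mathrm{cris}}$, one obtains inductively $\mathbf{D}_\mathrm{st}(U_n^\et)\cong U_n^{\log\text{-}\mathrm{cris}}$ as $(\varphi,N)$-modules, compatibly with the base points $y$ and $s_0$. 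Passing to the limit and dualising the resulting Hopf algebras yields the claimed isomorphism of affine group schemes over $\underline{\mathbf{M}\Phi}_K^N$.

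The main obstacle is the coefficient comparison isomorphism above, specifically its compatibility with the monodromy operator $N$ and with cup products (equivalently, with the Tannakian tensor structure), together with the requirement that the entire tower $\{U_n^\et\}$ remain within the category of semistable representations so that $\mathbf{D}_\mathrm{st}$ may be applied throughout; this last point is exactly where the semistability assertion of \cite[Theorem 1.4]{AIK15} is indispensable. A clean alternative, which avoids proving an independent coefficient comparison, is to cite \cite{AIK15} for the identification of $\mathbf{D}_\mathrm{st}(\pi_1^\et(Y_{\overline{K}},y)_{\Q_p})$ with the de Rham (crystalline) fundamental group of $Y/K$ endowed with its Frobenius and monodromy, and then to invoke the Hyodo--Kato comparison for fundamental groups to identify the latter with $\pi_1^{\log\text{-}\mathrm{cris}}((X_0,M_0)/K,s_0)$, the two steps together giving the isomorphism as $(\varphi,N)$-modules.
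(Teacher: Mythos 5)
There is a genuine gap, and it sits exactly at the two points you propose to ``record'' or ``invoke.'' First, your étale--log-crystalline dictionary $V\leftrightarrow E$ --- ``an equivalence on unipotent objects compatible with tensor products'' intertwining the two Tannakian structures --- is not an available input: by Tannaka duality such an equivalence is essentially \emph{equivalent} to the isomorphism of fundamental groups being proved, so assuming it is circular. Likewise the ``Hyodo--Kato comparison for fundamental groups'' in your alternative route does not exist as a citable theorem; it is the content of Theorem \ref{theo: dst} itself. What \cite{AIK15} actually supplies (their Theorem 1.8, quoted in the paper just before the proof) is stronger and more concrete than what you attribute to it: it identifies $\mathbf{D}_\mathrm{st}(\pi_1^\et(Y_{\overline{K}},y)_{\Q_p})$ directly with $\spec{A_{\infty,s_0}^{\cris,\vee}}$, where $A_{\infty,s_0}^{\cris,\vee}=\mathrm{colim}_n\,(s_0^*W_n)^\vee$ is the Hopf algebra built from the universal unipotent objects $W_n$ on the log special fibre $(X_0,M_0)$, already carrying their $(\varphi,N)$-structures. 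Your first route, with its coefficient comparison $\mathbf{D}_\mathrm{st}(H^1_\et(Y_{\overline{K}},V))\cong H^1_{\log\text{-}\mathrm{cris}}((X_0,M_0)/K,E)$ compatible with $N$ and cup products, amounts to re-proving that theorem of \cite{AIK15} from scratch; the dévissage is not routine precisely because compatibility with the monodromy operator and with the extension classes is the delicate part of their \S6.

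Second, even granting your inductive identification $\mathbf{D}_\mathrm{st}(U_n^\et)\cong U_n^{\log\text{-}\mathrm{cris}}$, your final step --- ``passing to the limit and dualising yields the claimed isomorphism'' --- silently conflates the group scheme $\spec{A_{\infty,s_0}^{\cris,\vee}}$ attached to the tower of universal objects with the Tannakian fundamental group $\pi_1^{\log\text{-}\mathrm{cris}}((X_0,M_0)/K,s_0)$ of the relatively unipotent category over $\underline{\mathbf{M}\Phi}_K^N$. This identification is the \emph{entire} content of the paper's proof: one uses the universal property of the $W_n$ to make $s_0^*E$, for each relatively unipotent $E$, a module over $s_0^*W_n=H^0_{\log\text{-}\mathrm{cris}}((X_0,M_0)/K,\cur{E}nd(W_n))$ and hence a comodule over $A_{\infty,s_0}^{\cris,\vee}$; this functor, via \cite[Proposition 2.3]{Laz15}, produces a homomorphism of affine group schemes over $\underline{\mathbf{M}\Phi}_K^N$ (not merely over $K$), which is then checked to be an isomorphism after forgetting the $(\varphi,N)$-structure, by \cite[Theorem 2.9]{Had11}. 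Over $K$ alone Hadian's result would suffice, but the enrichment to $\underline{\mathbf{M}\Phi}_K^N$ is exactly where the comodule construction is needed, and your proposal never constructs the comparison map at that enriched level.
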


To prove this, I will need to give a fairly concrete description of $\mathbf{D}_\mathrm{st}(\pi_1^\et(Y_{\overline{K}},y)_{\Q_p})$ in terms of the `universal unipotent objects' $U_n$ used both in \cite{AIK15} and in the proof of the various base change theorems above. These objects were originally introduced by Hadian \cite{Had11}, and they also play a key role in the algebraic approach to `good reduction' theorems for curves in \cite{CDPS}. 

In the context of the log special fibre $(X_0,M_0)$ these can be described as follows. As before, let $W(L_0)$ denote the Teichm\"uller lift to $W$ of the log structure of the punctured point on $\spec{k}$, and consider the category $\mathrm{Crys}((X_0,M_0)/(W,W(L_0)))_{\Q}$ of  log-isocrystals on the \emph{relative} log-crystalline site. A pointed object
\[ (E,e)\in \cur{N}\mathrm{Crys}((X_0,M_0)/(W,W(L_0)))^*_{\Q}\]
will be an object $E$ together with a chosen element $e \in s_0^*E$.

\begin{proposition} There exists a projective system
\[ \{(U_n,u_n)\}_{n\geq1}\in \cur{N}\mathrm{Crys}((X_0,M_0)/(W,W(L_0)))^*_{\Q}\]
of pointed objects, which is universal in the sense that for any other pointed object $(E,e)$, with $E$ of unipotence degree $\leq n$, there exists a unique morphism
\[ U_n \rightarrow E \]
mapping $u_n$ to $e$.
\end{proposition}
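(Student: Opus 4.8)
The plan is to carry out, inside the category $\cur{N}\mathrm{Crys}((X_0,M_0)/(W,W(L_0)))_{\Q}$, exactly the same recursive construction of universal unipotent objects used in the proof of Theorem \ref{bc}, following Hadian \cite{Had11} and \cite{Laz15}. The observation that makes the recursion work here is that the base Tannakian category is $\mathrm{Crys}((\spec{k},L_0)/(W,W(L_0)))_{\Q}=\mathrm{Vec}_K$, so that the relevant relative log-crystalline cohomology groups $H^i_{\lc}((X_0,M_0)/(W,W(L_0)),E)$ are simply finite-dimensional $K$-vector spaces, and $f_0^*$, $s_0^*$ play the roles of $-\otimes\cur{O}_{X/K}^\dagger$ and $x^*$ in the earlier argument. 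Before the recursion can begin I would establish the two cohomological inputs that drive it: (i) that $H^1_{\lc}((X_0,M_0)/(W,W(L_0)),E_1^\vee\otimes E_2)$ computes $\mathrm{Ext}^1$ in $\cur{N}\mathrm{Crys}((X_0,M_0)/(W,W(L_0)))_{\Q}$ (the analogue of Proposition \ref{exts} and its corollary); and (ii) a projection formula $H^i_{\lc}((X_0,M_0)/(W,W(L_0)),E\otimes f_0^*V)\cong H^i_{\lc}((X_0,M_0)/(W,W(L_0)),E)\otimes_K V$ for $V\in\mathrm{Vec}_K$ (the analogue of Proposition \ref{proj}). Both follow from the standard log-de Rham/crystalline formalism.

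Granting these, write $H^i(-):=H^i_{\lc}((X_0,M_0)/(W,W(L_0)),-)$ for brevity. I would take $U_1=\cur{O}$ to be the unit object, with $u_1=1\in s_0^*U_1$, and recursively define $U_{n+1}$ as the extension
\[ 0\rightarrow H^1(U_n^\vee)^\vee\otimes\cur{O}\rightarrow U_{n+1}\rightarrow U_n\rightarrow 0 \]
whose class corresponds to the identity endomorphism under the composite isomorphism
\[ \mathrm{Ext}^1(U_n,H^1(U_n^\vee)^\vee\otimes\cur{O})\cong H^1(U_n^\vee\otimes H^1(U_n^\vee)^\vee)\cong H^1(U_n^\vee)\otimes_K H^1(U_n^\vee)^\vee\cong \mathrm{End}_K(H^1(U_n^\vee)) \]
coming from (i) and (ii). Taking the quotient maps $U_{n+1}\rightarrow U_n$ as the transition maps produces a projective system, and I would choose a compatible system of points $u_n\in s_0^*U_n$ lifting $u_1$.

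Finally, universality is verified by induction on $n$ exactly as in \cite[\S2]{Had11} and \cite[\S3]{Laz15}: given a pointed object $(E,e)$ with $E$ of unipotence degree $\leq n$, the universal property of the defining extension of $U_n$, together with the identification of $\mathrm{Ext}^1$ with $H^1$, produces a unique morphism $U_n\rightarrow E$ carrying $u_n$ to $e$. Since this step is purely formal once (i) and (ii) are available, I would cite it rather than reproduce it. The main obstacle is therefore concentrated in establishing (i): one must check that in the \emph{relative} log-crystalline setting the group $H^1_{\lc}$ genuinely computes extensions of unipotent log-crystals over $(W,W(L_0))$, which is the log-crystalline counterpart of the local de Rham argument of Proposition \ref{exts}. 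Once this (together with the projection formula, which is comparatively routine) is in place, the construction and the verification of universality are a direct transcription of the earlier cases.
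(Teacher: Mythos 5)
Your proposal is correct and is exactly the argument the paper has in mind: the paper's proof is simply a citation to \cite[\S2]{Had11}, \cite[\S3]{AIK15} and the proof of Theorem \ref{bc}, all of which carry out precisely the recursion you describe (the extension of $U_n$ by $H^1(U_n^\vee)^\vee\otimes\cur{O}$ classified by the identity under the $\mathrm{Ext}^1$--$H^1$ identification and the projection formula), here with base Tannakian category $\mathrm{Vec}_K$ as you note. Your write-up just makes explicit the cohomological inputs that the cited sources supply in the log-crystalline setting.
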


\begin{remark} Of course, this property guarantees uniqueness of the projective system $\{(U_n,u_n)\}_{n \geq 1}$ of pointed isocrystals.
\end{remark}

\begin{proof} See \cite[\S2]{Had11}, \cite[\S3]{AIK15} or the proof of Theorem \ref{bc} above.
\end{proof}

Following the proof of Theorem \ref{bc}, these can then be extended canonically to objects
\[ W_n\in \cur{N}_{f_0}F\text{-}\mathrm{Isoc}((X_0,M_0)/K),\]
and the points $u_n\in s_0^*U_n$ to morphisms $w_n:K \rightarrow s_0^*W_n$ in $\underline{\mathbf{M}\Phi}_K^N$. The corresponding universal property is then the following.

\begin{proposition} For any $E\in \cur{N}_{f_0}F\text{-}\mathrm{Isoc}((X_0,M_0)/K)$ of relative unipotence degree $\leq n$, the induced map
\[ H^0_{\log\text{-}\mathrm{cris}}((X_0,M_0)/K,\cur{H}om(W_n,E))\rightarrow s_0^*E \]
given by composing with $w_n$ is an isomorphism in $\underline{\mathbf{M}\Phi}_K^N$.
\end{proposition}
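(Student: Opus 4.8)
The plan is to show the asserted morphism is an isomorphism in $\underline{\mathbf{M}\Phi}_K^N$ by first making it intrinsic, then reducing its bijectivity to the universal property of the $(U_n,u_n)$ already established. First I would make the map precise. The relative log-crystalline cohomology $H^0_{\lcris}((X_0,M_0)/K,\cur{H}om(W_n,E))$ is the zeroth relative pushforward $\mathbf{R}^0f_{0,*}\cur{H}om(W_n,E)$, an object of $\underline{\mathbf{M}\Phi}_K^N$; pulling the adjunction counit back along $s_0$ (using $f_0\circ s_0=\mathrm{id}$) gives a morphism $H^0_{\lcris}((X_0,M_0)/K,\cur{H}om(W_n,E))\rightarrow s_0^*\cur{H}om(W_n,E)=\cur{H}om(s_0^*W_n,s_0^*E)$ in $\underline{\mathbf{M}\Phi}_K^N$, and postcomposing with evaluation at $w_n:K\rightarrow s_0^*W_n$ yields the map in the statement. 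Each step is a morphism of $(\varphi,N)$-modules, so the map itself is one.

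Next I would reduce to underlying vector spaces. The forgetful functor $\underline{\mathbf{M}\Phi}_K^N\rightarrow\mathrm{Vec}_K$ is faithful, exact and conservative—an inverse of the underlying $K$-linear map automatically commutes with $\varphi$ and $N$—so it suffices to check the map becomes an isomorphism after forgetting the $(\varphi,N)$-structure. The key compatibility is that this forgetful functor corresponds to base change of the base log structure along $(W,W(L_0))\rightarrow W$: on underlying modules the relative cohomology is computed by the relative log de Rham complex $\Omega^\bu_{(X_0,M_0)/(\spec{k},L_0)}$, which does not see the log structure on the base thickening, the monodromy $N$ and the Frobenius being exactly the extra data distinguishing the two bases. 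Consequently the forgetful functor carries $H^0_{\lcris}((X_0,M_0)/K,\cur{H}om(W_n,E))$ to $H^0_{\lcris}((X_0,M_0)/(W,W(L_0)),\cur{H}om(U_n,E_0))$, where $U_n$ and $E_0$ are the images of $W_n$ and $E$ under the top horizontal functor of the relevant diagram ($W_n\mapsto U_n$ by construction, and $E$ of relative unipotence degree $\leq n$ maps to $E_0$ of unipotence degree $\leq n$), and it carries the map of the statement to evaluation at $u_n$, since $w_n$ restricts to $u_n$.

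Finally I would invoke the universal property of $(U_n,u_n)$. Identifying $H^0_{\lcris}((X_0,M_0)/(W,W(L_0)),\cur{H}om(U_n,E_0))$ with $\mathrm{Hom}_{\cur{N}\mathrm{Crys}((X_0,M_0)/(W,W(L_0)))_{\Q}}(U_n,E_0)$ (morphisms in the crystal category being the $H^0$ of the internal hom), the evaluation-at-$u_n$ map is precisely the bijection $\phi\mapsto s_0^*\phi(u_n)$ provided by the universal property of the pointed pro-object $\{(U_n,u_n)\}$. Hence it is an isomorphism onto $s_0^*E_0$, and therefore the original map is an isomorphism in $\underline{\mathbf{M}\Phi}_K^N$.

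The step I expect to be the main obstacle is the base-change compatibility in the second paragraph: that the forgetful functor $\underline{\mathbf{M}\Phi}_K^N\rightarrow\mathrm{Vec}_K$ genuinely identifies the two relative cohomologies on underlying modules and matches the evaluation maps. As in the earlier base-change arguments of this paper, I expect this to follow by passing to the generic fibre (base changing to $\ek$) and applying proper base change in log-crystalline cohomology, combined with the insensitivity of the relative de Rham complex to the base log structure. The remaining bookkeeping—that internal homs, the adjunction counit, and the points $w_n$, $u_n$ are all compatible with the forgetful functor—is routine.
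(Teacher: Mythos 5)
Your proposal is correct and takes essentially the same route as the paper: the paper's proof likewise reduces to the underlying $K$-vector spaces by forgetting the $(\varphi,N)$-structure (the ``fibre''), and then concludes from the universal property of the pointed system $\{(U_n,u_n)\}$ in $\cur{N}\mathrm{Crys}((X_0,M_0)/(W,W(L_0)))_{\Q}$, using that $W_n$ restricts to $U_n$ and $w_n$ to $u_n$ by construction. One small correction to your closing remark: the compatibility of the forgetful functor with $H^0$ is not verified here by base change to $\ek$ (there is no Robba-type ring in this purely special-fibre setting) but is essentially built into the paper's convention that $H^0_{\log\text{-}\mathrm{cris}}((X_0,M_0)/K,-)$ means Hyodo--Kato cohomology valued in $\underline{\mathbf{M}\Phi}_K^N$, whose underlying vector space is the log-crystalline cohomology over the hollow base $(W,W(L_0))$.
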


\begin{remark} Here $\cur{H}om(W_n,E)$ is internal hom in the category $F\text{-}\mathrm{Isoc}((X_0,M_0)/K)$, and the cohomology groups $H^0_{\log\text{-}\mathrm{cris}}((X_0,M_0)/K, -)$ are understood to be Hyodo--Kato cohomology groups taking values in the category of $(\varphi,N)$-modules.
\end{remark}

\begin{proof}
The point is that to check that the induced map
\[ H^0_{\log\text{-}\mathrm{cris}}((X_0,M_0)/K,\cur{H}om(W_n,E) )\rightarrow s_0^*E \]
is an isomorphism, it suffices to do so after taking the `fibre', i.e. forgetting the $(\varphi,N)$-module structure. But now it just follows from the corresponding universal property of the $U_n$.
\end{proof}

As in \cite[\S2]{Had11}, the universal properties of the $W_n$ then guarantee the existence of maps
\[ \cur{O}_{(X_0,M_0)/K}=W_1 \rightarrow W_n,\;\; W_{n+m}\rightarrow W_n \otimes W_m \]
such that $1\mapsto w_n$, and $w_{n+m}\mapsto w_n\otimes w_m$ respectively. Moreover, the identification
\[ s_0^*W_n \cong H^0_{\log\text{-}\mathrm{cris}}((X_0,M_0)/K,\cur{E}nd(W_n)) \]
induces an algebra structure on each $W_n$. All of these then combine to make
\[ A_{\infty,s_0}^{\cris,\vee}:= \mathrm{colim}_n (s_0^*W_n)^\vee \]
into a Hopf $\underline{\mathbf{M}\Phi}_K^N$-algebra in the sense of \S\ref{fgtc}. The proof of the following result is given in \cite[\S6]{AIK15}.

\begin{theorem}[\cite{AIK15}, Theorem 1.8] There is an isomorphism
\[\mathbf{D}_\mathrm{st}(\pi_1^\et(Y_{\overline{K}},y)_{\Q_p}) \cong \spec{A_{\infty,s_0}^{\cris,\vee}} \]
of non-abelian $(\varphi,N)$-modules over $K$.
\end{theorem}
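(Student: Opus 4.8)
The statement is Theorem 1.8 of \cite{AIK15}, and the plan is to follow their argument, which runs exactly parallel to the inductive constructions used throughout this article. The key structural fact is that, on the Tannakian category $\mathrm{Rep}^\mathrm{st}_{\Q_p}(G_K)$ of semistable representations, Fontaine's functor $\mathbf{D}_\mathrm{st}$ is exact, faithful and compatible with tensor products and duals. Consequently it carries the pro-unipotent group $\pi_1^\et(Y_{\overline{K}},y)_{\Q_p}$ — more precisely its coordinate Hopf algebra, which by the unipotent Tannakian formalism is the filtered colimit $\mathrm{colim}_n (y^*U_n^\et)^\vee$ of the duals of the fibres $y^*U_n^\et$ of the universal unipotent $\Q_p$-local systems $U_n^\et$ on $Y_{\overline{K}}$ — to a Hopf $\underline{\mathbf{M}\Phi}_K^N$-algebra, i.e.\ an affine group scheme in $(\varphi,N)$-modules. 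It therefore suffices to produce, compatibly in $n$ and with all the Hopf structure maps, isomorphisms $\mathbf{D}_\mathrm{st}(y^*U_n^\et)\cong s_0^*W_n$ in $\underline{\mathbf{M}\Phi}_K^N$, where the $W_n$ are the log-crystalline universal objects whose fibres $s_0^*W_n$ assemble into $A_{\infty,s_0}^{\cris,\vee}$.

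First I would record the cohomological input: the semistable ($C_\mathrm{st}$) comparison isomorphism identifying the $p$-adic étale cohomology of $Y_{\overline{K}}$, with coefficients in a unipotent $\Q_p$-local system, with the Hyodo--Kato log-crystalline cohomology of the special fibre $(X_0,M_0)$, in a way compatible with $\varphi$, with the monodromy operator $N$, and hence with $\mathbf{D}_\mathrm{st}$ itself. Applied to $H^0$ this matches the fibre functors $y^*$ and $s_0^*$; applied to $H^1$ it gives an isomorphism $\mathbf{D}_\mathrm{st}(H^1_\et(Y_{\overline{K}},U_n^{\et\vee}))\cong H^1_{\lcris}((X_0,M_0)/K,W_n^\vee)$ of $(\varphi,N)$-modules, intertwining the two $\mathrm{Ext}^1$-interpretations.

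The comparison of the $U_n^\et$ with the $W_n$ is then carried out by induction, both towers being built by the identical recipe: $U_{n+1}^\et$ (resp.\ $W_{n+1}$) is the extension of $U_n^\et$ (resp.\ $W_n$) by $H^1(\ldots,U_n^\vee)^\vee\otimes\mathbf{1}$ classified by the identity endomorphism, exactly as in the proof of Theorem \ref{bc}. Since $\mathbf{D}_\mathrm{st}$ is exact and commutes with internal homs and with the relevant $H^1$'s via the comparison just recorded, it sends the étale extension class to the log-crystalline one, so the inductive isomorphism $\mathbf{D}_\mathrm{st}(y^*U_n^\et)\cong s_0^*W_n$ propagates to degree $n+1$. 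Passing to duals and to the colimit, and transporting the multiplication, comultiplication, unit, counit and antipode (all induced by the shared universal properties, hence preserved on the nose), yields the desired isomorphism $\mathbf{D}_\mathrm{st}(\pi_1^\et(Y_{\overline{K}},y)_{\Q_p})\cong\spec{A_{\infty,s_0}^{\cris,\vee}}$ of Hopf $\underline{\mathbf{M}\Phi}_K^N$-algebras.

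The main obstacle is precisely the first input: establishing the semistable comparison \emph{with unipotent coefficients}, and verifying that it is compatible with $\varphi$ and $N$ — so that it is genuinely an isomorphism of $(\varphi,N)$-modules and not merely of underlying $K$-vector spaces — and that this compatibility is functorial enough to identify the extension classes defining the two towers. This is the technical heart of \cite[\S6]{AIK15}; it rests on the coefficient version of the $C_\mathrm{st}$-comparison for the log-smooth proper curve $(\cur{Y},N)\to(\spec{W},L')$ and on the fact that $\mathbf{D}_\mathrm{st}$ intertwines the Galois-cohomological and log-crystalline descriptions of the relevant $\mathrm{Ext}^1$ groups. Once this compatibility is in hand, everything else is a formal transport of the Tannakian/Hopf-algebra structure along the comparison, and I would simply cite \cite[\S6]{AIK15} for the details.
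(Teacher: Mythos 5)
Your proposal is correct and follows the same route as the paper, which gives no independent argument for this statement but simply quotes \cite[\S6]{AIK15}: the identification of the two towers of universal objects via their shared inductive construction, the transport of the Hopf structure through $\mathbf{D}_\mathrm{st}$, and the semistable comparison with unipotent coefficients as the technical heart are exactly the content of Andreatta--Iovita--Kim's proof that both you and the paper ultimately cite. Your sketch is in fact more detailed than the paper's treatment, and it correctly isolates where the real work lies (compatibility of the comparison isomorphism with $\varphi$, $N$, and the extension classes), deferring precisely that to \cite[\S6]{AIK15}.
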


The proof of Theorem \ref{theo: dst}, and hence Theorem \ref{main2}, can now be completed as follows.

\begin{proof}[Proof of Theorem \ref{theo: dst}]
It suffices to show that
\[ \spec{A_{\infty,s_0}^{\cris,\vee}}\cong\pi_1^{\log\text{-}\mathrm{cris}}((X_0,M_0)/K,s_0),\]
and the proof of this is very similar to the proof of similar results in \cite{CDPS}. To construct a homomorphism between them, it suffices by \cite[Proposition 2.3]{Laz15} together with the defining property of $\pi_1^{\log\text{-}\mathrm{cris}}((X_0,M_0)/K,s_0)$ to produce a functor from the category $\cur{N}_{f_0}F\text{-}\mathrm{Isoc}((X_0,M_0)/K)$ to the category of representations of $\spec{A_{\infty,s_0}^{\cris,\vee}}$, again as an affine group scheme over $\underline{\mathbf{M}\Phi}_K^N$.

This can be achieved once more by the universal properties of the $W_n$: if we have any relatively unipotent $E\in  \cur{N}_{f_0}F\text{-}\mathrm{Isoc}((X_0,M_0)/K)$ of relative unipotence degree $\leq n$, then the action of $\cur{E}nd(W_n)$ on $\cur{H}om(W_n,E)$ makes
\[ s_0^*E = H^0_{\log\text{-}\mathrm{cris}}((X_0,M_0)/K,\cur{H}om(W_n,E))  \]
into a module over
\[ s_0^*W_n = H^0_{\log\text{-}\mathrm{cris}}((X_0,M_0)/K,\cur{E}nd(W_n)),  \]
compatibly as $n$ varies. Therefore $s_0^*E$ becomes a co-module over $A_{\infty,s_0}^{\cris,\vee}$, in other words a representation of $\spec{A_{\infty,s_0}^{\cris,\vee}}$. Finally to prove that this map is an isomorphism, it suffices to do so after forgetting the extra $(\varphi,N)$-module structure, i.e. when considering them simply as affine group schemes over $K$. This can then be shown word for word as in \cite[Theorem 2.9]{Had11}.
\end{proof}

In fact, using the more refined version of \cite[Theorem 1.6]{AIK15}, namely \cite[Proposition 6.3]{AIK15}, allows the deduction of a correspondingly more refined version of Theorem \ref{main2}. Number the lower central series of a unipotent group $U$ by $U[1]=U$, $U[n]=[U[n-1],U]$ and write $U^{(n)}$ for $U/U[n]$.

\begin{theorem} \label{main4} Let $X/F$ be a smooth and proper curve with semistable reduction, and fix $x\in X(F)$. Then $X$ has good reduction if and only if the non-abelian $\pn$-module $\pi_1^\rig(X/\ekd,x)^{(4)}$ is solvable.
\end{theorem}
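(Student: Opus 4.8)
The plan is to run exactly the chain of equivalences used to establish Theorem \ref{main2}, but now keeping track of the lower central series filtration throughout, and at the very last step to replace the appeal to \cite[Theorem 1.6]{AIK15} with its refined form \cite[Proposition 6.3]{AIK15}.

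First I would note that every isomorphism produced in the proof of Theorem \ref{main2}---the base change isomorphism of Theorem \ref{bc}, the identification of $\pi_1^\rig(X/\ekd,x)$ with the log-crystalline fundamental group $\pi_1^{\log\text{-}\mathrm{cris}}((X_0,M_0)/K,s_0)$ coming from \S\ref{sec: ssreg}, and the isomorphism $\mathbf{D}_\mathrm{st}(\pi_1^\et(Y_{\overline{K}},y)_{\Q_p})\cong\pi_1^{\log\text{-}\mathrm{cris}}((X_0,M_0)/K,s_0)$ of Theorem \ref{theo: dst}---is an isomorphism of group objects in the relevant enriched Tannakian category. Since the terms $U[n]$ of the lower central series are defined intrinsically via iterated commutators, each such isomorphism automatically preserves them and therefore descends to an isomorphism of the quotients $U^{(n)}=U/U[n]$, compatibly with the $\pn$- or $(\varphi,N)$-module structure. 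Applying this with $n=4$, and using that $\mathbf{D}_\mathrm{st}$ is a tensor functor (so commutes with forming lower central quotients), I obtain an identification of fourth quotients
\[ \mathbf{D}_\mathrm{st}\!\left(\pi_1^\et(Y_{\overline{K}},y)_{\Q_p}^{(4)}\right)\cong \pi_1^{\log\text{-}\mathrm{cris}}((X_0,M_0)/K,s_0)^{(4)},\]
the right-hand side being the regular non-abelian $(\varphi,N)$-module attached to $\pi_1^\rig(X/\ekd,x)^{(4)}$ via \S\ref{sec: ssreg}.

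By the proposition of \S\ref{soln}, $\pi_1^\rig(X/\ekd,x)^{(4)}$ is non-singular exactly when its associated monodromy operator $N$ vanishes, and under the above identification this is equivalent to the vanishing of $N$ on $\mathbf{D}_\mathrm{st}(\pi_1^\et(Y_{\overline{K}},y)_{\Q_p}^{(4)})$. Since a semistable representation is crystalline precisely when the monodromy operator on its associated $(\varphi,N)$-module is zero, this says exactly that $\pi_1^\et(Y_{\overline{K}},y)_{\Q_p}^{(4)}$ is crystalline. Now the refined Andreatta--Iovita--Kim criterion \cite[Proposition 6.3]{AIK15} asserts that the stable model $\cur{Y}\rightarrow\spec{W}$ is smooth if and only if this fourth quotient is crystalline; and exactly as in the proof of Theorem \ref{main2}, smoothness of $\cur{Y}$ over $W$ is equivalent---via unobstructedness of log-smooth deformations \cite[Proposition 8.6]{Kat96}---to the log special fibre $(X_0,M_0)$ being genuinely smooth over $k$, hence to good reduction of $X$.

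The main obstacle I anticipate is the bookkeeping in the first step: one must check that passing to the $n$th lower central quotient is a functorial operation on affine group schemes internal to the enriched categories $\pnm{\ekd}$ and $\underline{\mathbf{M}\Phi}_K^N$, and that it is compatible with the forgetful fibre functors, so that the whole string of isomorphisms really does descend to the quotients and the monodromy operator on $\pi_1^\rig(X/\ekd,x)^{(4)}$ is faithfully matched with the one on the \'etale side. Once this formal compatibility is in place, the theorem follows from the argument for Theorem \ref{main2} simply by substituting \cite[Proposition 6.3]{AIK15} for \cite[Theorem 1.6]{AIK15}.
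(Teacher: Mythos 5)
Your proposal is correct and coincides with the paper's own argument: the paper derives Theorem \ref{main4} in a single remark, by rerunning the proof of Theorem \ref{main2} with the refined criterion \cite[Proposition 6.3]{AIK15} substituted for \cite[Theorem 1.6]{AIK15}, which is exactly your plan, and your explicit check that forming lower central series quotients is functorial and compatible with the enriched isomorphisms (Theorems \ref{bc}, \ref{theo: dst} and the identifications of \S\ref{sec: ssreg}) merely spells out what the paper leaves implicit. One wording point: ``solvable'' in the statement should surely read ``non-singular'' (as in the introduction's version of the theorem, since any quotient of a unipotent group is automatically solvable, making the stated condition vacuous), and your proof correctly establishes the non-singularity criterion for $\pi_1^\rig(X/\ekd,x)^{(4)}$.
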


\bibliographystyle{../../Templates/bibsty}
\bibliography{../../lib.bib}

\end{document}